\numberwithin{equation}{section}
\newtheorem{Theorem}{Theorem}[section]
\newtheorem{Corollary}[Theorem]{Corollary}
\newtheorem{Proposition}[Theorem]{Proposition}
\newtheorem{Lemma}[Theorem]{Lemma}
\newtheorem*{MainTheorem}{Main Theorem}
\newtheorem*{Hyperideal}{Theorem}
\theoremstyle{definition}
\newtheorem*{Definition}{Definition}
\newtheorem{Remark}[Theorem]{Remark}
\title[Rigidity of \textit{c}-polyhedra]{Rigidity of Circle Polyhedra in the $2 $-Sphere and of\\Hyperideal Polyhedra in Hyperbolic $3$-Space}
\author{John C. Bowers}
\address{Department of Computer Science, James Madison University, 
Harrisonburg VA 22807}
\email{bowersjc@jmu.edu}
\author{Philip L. Bowers}
\address{Department of Mathematics, The Florida State University, 
Tallahassee FL 32306}
\email{bowers@math.fsu.edu}
\author{Kevin Pratt}
\address{Department of Mathematics, University of Connecticut, Storrs CT 06269}
\email{kevin.pratt@uconn.edu}
\date{\today} 
\begin{document}

\begin{abstract}
We generalize Cauchy's celebrated theorem on the global rigidity of convex polyhedra in Euclidean $3$-space $\mathbb{E}^{3}$ to the context of circle polyhedra in the $2$-sphere $\mathbb{S}^{2}$. We prove that any two convex and proper non-unitary \textit{c}-polyhedra with M\"obius-congruent faces that are consistently oriented are M\"obius-congruent. Our result implies the global rigidity of convex inversive distance circle packings in the Riemann sphere as well as that of certain hyperideal hyperbolic polyhedra in $\mathbb{H}^{3}$.
\end{abstract}

\subjclass[2010]{52C26}
\keywords{circle packing, inversive distance, hyperbolic geometry, hyperideal polyhedra}

\maketitle

\section*{Introduction}
Start with an abstract oriented triangulation of the $2$-sphere $\mathbb{S}^2$ in which each edge is labeled with a non-negative real number. A {\em circle realization} of this triangulation is a drawing of circles on the sphere, one circle for each vertex, that respects orientation, and with the inversive distance between adjacent circles equal to the label for that edge. When connecting adjacent circle centers by geodesic arcs results in a geodesic triangulation of the $2$-sphere, this circle realization is called an {\em inversive distance circle packing}.\footnote{Perhaps a better term would be \textit{circle pattern} instead of \textit{packing} since adjacent circles may in fact be disjoint. We very soon will dispense with packings altogether and consider more general \textit{circle frameworks} and, specifically, \textit{circle polyhedra}.}  

The question addressed in this paper is whether these inversive distance circle packings of $\mathbb{S}^2$, and more generally these circle realizations, are uniquely determined by the underlying triangulation and the inversive distances between adjacent circles. Of course, uniqueness is with respect to the group of orientation-preserving, circle-preserving transformations of the $2$-sphere, the group of M\"obius transformations $\text{M\"ob}(\mathbb{S}^{2})$. In \cite{Bowers:2004bg}, Bowers and Stephenson questioned this uniqueness for general surfaces. In the case of packings on closed, orientable Euclidean or hyperbolic surfaces, Guo \cite{Guo:2011kf} verified their local rigidity, and later Luo \cite{Luo:2011ex} verified their global rigidity, as long as adjacent circles overlap in angles at most $\pi/2$. Surprisingly, such packings are not unique on $\mathbb{S}^2$. Ma and Schlenker \cite{Ma:2012hl} produced counter-examples by constructing pairs of circle packings on $\mathbb{S}^2$ for the octahedral graph realizing the same inversive distance edge data, but for which there is no M\"obius transformation taking one pattern onto the other. Their construction used an infinitesimally flexible Euclidean polyhedron, embeddings in de Sitter space $\mathbb{S}^{3}_{1}$, and the Pogorelov map between different geometries. In \cite{bowersBowers:2016}, the first two authors of the present work produced families of counter-examples inspired by Ma and Schlenker using only the inversive geometry of the sphere. 

Ma and Schlenker's result was surprising particularly because the famous Koebe-Andre'ev-Thurston Circle Packing Theorem implies the uniqueness of such packings when all inversive distance edge labels are in the closed unit interval $[0,1]$. In this case, adjacent circles always overlap, and in  angles between 0 (tangent circles) and $\pi/2$ (orthogonal overlap). In contrast, our interest is in studying packings where adjacent circles may be disjoint, where inversive distances are greater than unity. Of particular interest to us are \textit{edge-separated} packings in which all adjacent circles are disjoint. These packings always have \textit{ortho-circles}, meaning that any three mutually adjacent circles have a unique ortho-circle, a circle orthogonal to all three. In this paper we will allow adjacent circles to overlap as long as this ortho-circle property is preserved. We even will allow adjacent circles to overlap in an angle greater than $\pi/2$ but we will not allow tangencies. Our study then is of \textit{non-unitary circle packings with ortho-circles}, inversive distance circle packings for which all inversive distance edge labels take their values in $(-1, 1) \cup (1, \infty)$ and for which each face admits an ortho-circle. The constructions given in \cite{bowersBowers:2016} show that in general such packings are not unique. In contrast to this, our main result shows that if we restrict ourselves to {\em convex} non-unitary circle packings with ortho-circles, then the Bowers--Stephenson question is once more answered in the affirmative.
\begin{MainTheorem}\label{thm:main}
	Let $\mathcal{C}$ and $\mathcal{C}'$ be two non-unitary, inversive distance circle packings with ortho-circles for the same oriented edge-labeled triangulation of the $2$-sphere $\mathbb{S}^{2}$. If $\mathcal{C}$ and $\mathcal{C}'$ are convex and proper, then there is a M\"obius transformation $T:\mathbb{S}^2\rightarrow\mathbb{S}^2$ such that $T(\mathcal{C}) = \mathcal{C}'$. 
\end{MainTheorem}
Our interest really is in the generalization of circle packings that we call \textit{c}-frameworks and the circle realizations of labeled graphs, and in particular in a generalization of $3$-dimensional compact polyhedra to `polyhedral' patterns of circles in the $2$-sphere. This in fact is a major goal of this paper---to introduce these new concepts that generalize inversive distance circle packings to circle realizations, to define \textit{c}-polyhedra (with the circle packings of the Main Theorem as but special examples of \textit{c}-polyhedra), and to study the global rigidity of these \textit{c}-polyhedra. Finding the right notion for generalizing boundedness for polyhedra, which we term \textit{properness} for \textit{c}-polyhedra, from Euclidean geometry to circle geometry will take some effort. The most important ingredients for this study of rigidity are those of the convexity and the properness of \textit{c}-polyhedra, and these definitions and the pursuit of the just right notions of convexity and properness will take up a significant portion of the paper in Section~\ref{sec:prelims}, \textit{Preliminaries}, before we are able to prove this more general version of the Main Theorem.
\begin{MainTheorem}[General]
	Any two convex and proper non-unitary \textit{c}-polyhedra with M\"obius-congruent faces that are based on the same oriented abstract spherical polyhedron and are consistently oriented\,\footnote{Without the consistent orientation assumption, the \textit{c}-polyhedra are inversive-congruent.} are M\"obius-congruent.
\end{MainTheorem}
The Main Theorem coupled with the Ma-Schlenker example of~\cite{Ma:2012hl} and the examples of ~\cite{bowersBowers:2016} show that the uniqueness of inversive distance circle packings, and more generally, of \textit{c}-polyhedra is exactly analogous to that of Euclidean polyhedra---convex and bounded polyhedra in $\mathbb{E}^{3}$ are prescribed uniquely by their edge lengths and face angles whereas non-convex or unbounded polyhedra are not. The proof of this for convex and bounded Euclidean polyhedra is Cauchy's celebrated rigidity theorem \cite{Cauchy1813}, which is reviewed in Section~\ref{Section:CRT}. Our proof here follows Cauchy's original argument, which splits the proof into two components---a combinatorial lemma and a geometric lemma. Cauchy's combinatorial lemma deals with a certain labeling of the edges of any graph on a sphere, and applies to our setting. The geometric lemma, known as \textit{Cauchy's Arm Lemma}, requires that a polygon with certain properties be defined for each vertex of the polyhedron, and fails to apply here. The main work of this paper is in describing and analyzing a family of hyperbolic polygons that we call {\em green-black polygons} that are defined for each vertex of a \textit{c}-polyhedron in a M\"obius-invariant manner. We develop an analog of Cauchy's Arm Lemma for convex green-black polygons in Section~\ref{sec:greenblackpolygons} and use it to prove the Main Theorem in Section~\ref{Section:Proof}.

Though our interest in the Bowers-Stephenson question arises from our study of inversive distance packings, a second motivation for studying this topic stems from the equivalence between inversive distance circle packings, or more generally \textit{c}-polyhedra, and certain generalized hyperideal polyhedra in hyperbolic $3$-space $\mathbb{H}^{3}$. Our Main Theorem implies that the hyperideal polyhedra associated to convex \textit{c}-polyhedra are globally rigid. These hyperideal polyhedra are generalizations of those that Bao and Bonahon studied in~\cite{BaoBonahon:2002}. 
\begin{Hyperideal}\label{Theorem:Hyper}
	In Klein's projective model for the hyperbolic space in which $\mathbb{H}^{3}$ is identified with the unit open ball $B^{3}$ in $\mathbb{E}^{3} \subset \mathbb{R}\mathbb{P}^{3}$, let $P$ be the intersection with $\mathbb{H}^{3}$ of the compact, convex, polyhedron $P'$ in $\mathbb{R}\mathbb{P}^{3}$, all of whose vertices lie outside the closed unit ball $\mathbb{H}^{3} \cup \partial \mathbb{H}^{3} = B^{3}\cup \mathbb{S}^{2}$ and each of whose faces meets the open ball $\mathbb{H}^{3}$. If $P$ is proper and non-unitary, then $P$ is globally rigid, unique up to isometries of $\mathbb{H}^{3}$. (See Section~\ref{Section:Hyperideal} for the definitions.)
\end{Hyperideal}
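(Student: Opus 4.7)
The plan is to reduce the theorem to the General Main Theorem via the standard polarity in the Klein projective model, which matches points outside the unit sphere with (oriented) circles on $\mathbb{S}^{2}$. To a vertex $v$ of $P'$ I assign the circle $C_{v}$ in which the polar plane of $v$ meets $\partial\mathbb{H}^{3}=\mathbb{S}^{2}$; this is a genuine circle because $v$ lies strictly outside the closed ball. To a face $f$ of $P'$ I assign the circle $C_{f}$ cut from $\mathbb{S}^{2}$ by the carrying plane of $f$; this is a genuine circle because each face of $P'$ meets the open ball. Two standard computations with projective polarity then show that (i)~for any two vertices $v,w$ on a common face, the inversive distance of $C_{v},C_{w}$ records the position of the line $\overline{vw}$ relative to $\mathbb{S}^{2}$ and avoids $\pm 1$ precisely when $\overline{vw}$ is not tangent to $\mathbb{S}^{2}$, and (ii)~$C_{f}$ is orthogonal to every $C_{v}$ with $v\in f$, so that $C_{f}$ is the ortho-circle of the face. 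In this way $P'$ produces a \textit{c}-polyhedron $\mathcal{C}=\mathcal{C}(P)$ on $\mathbb{S}^{2}$ with the same combinatorics as $P'$.

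Next I would verify that the hypotheses on $P$ translate faithfully into the hypotheses of the General Main Theorem applied to $\mathcal{C}(P)$. Convexity of $P'$, which says every vertex lies on one side of each non-incident face plane, is carried by the polarity, which reverses the ``inside/outside'' relation between points and planes, into the convexity condition developed for \textit{c}-polyhedra in Section~\ref{sec:prelims}. The hypothesis that each face of $P'$ meets $\mathbb{H}^{3}$ is exactly what supplies the ortho-circles $C_{f}$. The non-unitary assumption on $P$ translates into no two adjacent vertex circles being tangent, and the properness of $P$ into properness of $\mathcal{C}$, which is precisely why the delicate formulation of properness was crafted in Section~\ref{sec:prelims}. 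Finally, the geometry of each face of $\mathcal{C}$ is fully determined by $C_{f}$ together with the vertex circles $C_{v_{i}}$ on it, so hyperbolically congruent face polygons of two such hyperideal polyhedra produce M\"obius-congruent faces of the associated \textit{c}-polyhedra, consistently oriented by the orientation of $P'$.

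Given two hyperideal polyhedra $P_{1},P_{2}$ satisfying the hypotheses and realizing the same intrinsic face data on the same oriented abstract spherical polyhedron, the construction produces convex, proper, non-unitary \textit{c}-polyhedra $\mathcal{C}_{1},\mathcal{C}_{2}$ with M\"obius-congruent, consistently oriented faces. The General Main Theorem then yields $T\in\text{M\"ob}(\mathbb{S}^{2})$ with $T(\mathcal{C}_{1})=\mathcal{C}_{2}$. Because $\text{M\"ob}(\mathbb{S}^{2})$ is precisely the boundary action of the orientation-preserving isometry group of $\mathbb{H}^{3}$, $T$ extends to an isometry $\widetilde{T}$ of $\mathbb{H}^{3}$. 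Since $\widetilde{T}$ acts projectively on $\mathbb{R}\mathbb{P}^{3}$ and commutes with the polarity defining our dictionary, it carries $P_{1}'$ to $P_{2}'$ and hence $P_{1}$ to $P_{2}$, giving the required global rigidity.

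The step I expect to be the main obstacle is the bookkeeping of the second paragraph: proving that the \emph{precise} notion of properness crafted for \textit{c}-polyhedra in Section~\ref{sec:prelims} is equivalent to the properness hypothesis on the hyperideal polyhedron $P$. This requires excluding the various degenerate incidences between edges $\overline{vw}$ of $P'$ and $\mathbb{S}^{2}$, and verifying that the separation of the vertex circles by each face ortho-circle comes out with the correct sign. The convexity translation is similarly subtle, as convexity of $P'$ is a projective condition in $\mathbb{R}\mathbb{P}^{3}$ whereas convexity of $\mathcal{C}$ is an inversive condition on $\mathbb{S}^{2}$, and the equivalence must be checked face-by-face through the polarity.
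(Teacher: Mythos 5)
Your overall strategy---encode the hyperideal polyhedron as a circle polyhedron on $\mathbb{S}^{2}$, invoke the General Main Theorem, and extend the resulting M\"obius transformation to a hyperbolic isometry---is the same as the paper's, but your dictionary is the polar dual of the one the paper actually uses, and this opens a genuine gap. The paper builds its \textit{c}-polyhedron $G(\mathcal{C})$ on the $1$-skeleton of the \emph{dual} polyhedron $P^{*}$: the circles are $C_{u}=\Pi_{u}\cap\mathbb{S}^{2}$ for the support planes of the \emph{faces} $u$ of $P'$ (oriented so the companion disk misses the interior of $P'$), and the ortho-circle of a \textit{c}-face is the tangency circle of the cone circumscribing $\mathbb{S}^{2}$ from the corresponding \emph{vertex} of $P'$, i.e.\ the polar circle of that vertex. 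You instead index circles by the vertices of $P'$ (their polar circles) and take the face-plane circles as ortho-circles; the two constructions are exchanged by polarity and are not the same \textit{c}-polyhedron. This matters because the hypotheses of the theorem are not free-standing: the paper \emph{defines} ``$P$ is proper'' to mean that \emph{its} $G(\mathcal{C})$ is proper (a condition on \textit{c}-links inside the caps cut off by the face planes), and the face data in the rigidity statement is carried by the \textit{c}-faces of that same $G(\mathcal{C})$, indexed by the faces of $P^{*}$. With your dual dictionary, properness, convexity, and M\"obius-congruence of faces become conditions on different disks and different circle configurations, and establishing the equivalence is exactly the step you defer as ``the main obstacle''---it is not bookkeeping, it is the content of the reduction. (Non-unitarity is the one hypothesis that transfers for free, since both versions reduce to no edge line of $P'$ being tangent to $\mathbb{S}^{2}$.)

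Two further concrete omissions. You never orient the vertex circles $C_{v}$, yet orientations are essential throughout: inversive distances, the segregation condition in the definition of convexity, and the consistent-orientation hypothesis of the General Main Theorem all depend on them. And convexity of the \textit{c}-polyhedron is asserted rather than proved; in the paper this is the substantive Lemma of Section~\ref{Section:Hyperideal}, proved by normalizing a vertex of $P'$ to infinity with a projective transformation fixing $\mathbb{S}^{2}$ and showing that a failure of segregation would trap $P'$ in a region bounded in $\mathbb{E}^{3}$, contradicting that the vertex lies at infinity. An analogous argument is needed for your dual construction and is not supplied. The final step---extending $T$ to an isometry of $\mathbb{H}^{3}$ acting projectively and carrying $P_{1}'$ to $P_{2}'$---is fine.
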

To say that $P$ is \textit{globally rigid} is to say that $P$ is determined uniquely by the combinatorics of $P'$ and the hyperbolic isometry classes of the faces of the polyhedron $P$ (which are determined by the faces of the polyhedron $P^{*}$ dual to $P'$; see Section~\ref{Section:Hyperideal}). This means that combinatorics and isometry classes of faces determine uniquely the angles of intersection of adjacent faces that meet in $\mathbb{H}^{3}$, or their hyperbolic distance from one another if they do not meet in $\mathbb{H}^{3}$, again exactly analogous to bounded, convex Euclidean polyhedra.

This generalizes rigidity results of Bao-Bonahon~\cite{BaoBonahon:2002} and Rousset~\cite{Rousset:2004} on hyperideal polyhedra that require that, either all edges of $P'$ meet the unit ball $\mathbb{H}^{3}$, or all edges of $P'$ lie ``beyond infinity.''  Of course, Bao and Bonahon are able to prove more, namely, they give an existence result that characterizes in terms of dihedral angles and combinatorics those hyperideal polyhedra where all adjacent faces meet in $\mathbb{H}^{3}$. We do not provide such a characterization in our context, when some adjacent faces meet in $\mathbb{H}^{3}$ and others do not. The question of existence of these generalized hyperideal polyhedra is equivalent to the Bowers-Stephenson question of the existence of circle patterns with preassigned inversive distances between adjacent circles. When one allows for separated circles, this problem of existence becomes quite a bit more delicate than when adjacent circles are required to meet, with the possibility of even the local assignment of inversive distances about a central circle having no realization. These issues and the preceding theorem along with several corollaries, as well as a bit of background history of the rigidity of hyperbolic polyhedra, will be discussed in the final Section~\ref{Section:Hyperideal}.

\section{Preliminaries}\label{sec:prelims}
Circle packings have been studied primarily in the cases where adjacent circles intersect nontrivially. When adjacent circles are allowed to be separated, with absolute inversive distances larger than unity, new difficulties arise from several directions. Existence of packings becomes problematic, even locally. When working on the $2$-sphere, the facts that there are two complementary disks that a circle bounds and that the centers and radii of these disks fail to be M\"obius invariants present difficulties in even defining the right notion of a M\"obius-invariant circle packing. These difficulties suggest that the more natural setting for working with circle configurations on $\mathbb{S}^{2}$ with prescribed inversive distances is obtained by replacing circles by oriented circles, and circle packings by oriented circle realizations. We find that a more efficient language for oriented circle realizations on the $2$-sphere than that of the traditional circle packing language is one obtained by adapting some of the terminology of rigid Euclidean frameworks or linkages to the setting of circle configurations in $\mathbb{S}^{2}$.\footnote{The first two authors expand upon this point of view and present a general framework for studying the rigidity of circle realizations in~\cite{Bowers:rigidityOfCircle:techReport}.} In this rather lengthy preliminary section, we recall the necessary properties from the inversive geometry of the $2$-sphere needed for our result, we introduce oriented circle realizations and circle polyhedra, we describe appropriate notions of convexity and properness for circle  polyhedra, and we review some of and develop further the elementary geometry of hyperideal hyperbolic polygons and polyhedra.

\subsection{Inversive geometry of the Riemann sphere} We use the two usual models for the Riemann sphere, the round unit sphere $\mathbb{S}^{2}$ in Euclidean $3$-space $\mathbb{E}^{3}$ and its image under stereographic projection, the extended complex plane $\widehat{\mathbb{C}} = \mathbb{C} \cup \{\infty\}$. $\mathrm{Inv}(\mathbb{S}^{2})$ denotes the inversive group of the $2$-sphere generated by reflections in the circles of $\mathbb{S}^{2}$, and $\text{M\"ob}(\mathbb{S}^{2})$ its index two subgroup of M\"obius transformations. The absolute inversive distance between two circles in $\mathbb{S}^{2}$ is a M\"obius invariant of the placement of the two circles in the $2$-sphere. More useful for us is the general inversive distance that serves as an invariant for the placement of two relatively oriented circles in the $2$-sphere. We give two equivalent definitions, the first using the algebra of the extended plane $\widehat{\mathbb{C}}$ and the second using the intrinsic spherical metric of the $2$-sphere $\mathbb{S}^{2}$. Each has its advantages, as becomes apparent as the discussion advances.

For the definitions, note that an oriented circle determines a unique closed \textit{companion} or \textit{spanning disk} that the circle bounds. Indeed, assuming fixed orientations for $\mathbb{S}^{2}$ and $\widehat{\mathbb{C}}$ that are compatible via stereographic projection, the companion disk determined by the oriented circle $C$ is the closed complementary disk $D$ (of the two available) whose positively oriented boundary $\partial^{+} D = C$, where of coures the orientation of $D$ is inherited from that of $\mathbb{S}^{2}$ or $\widehat{\mathbb{C}}$. This is described colloquially by saying that $D$ lies to the left of $C$ as one traverses $C$ along the direction of its orientation. 

 \begin{Definition}[General inversive distance]
Let $C_{1}$ and $C_{2}$  be oriented circles in the extended plane $\widehat{\mathbb{C}}$ bounding their respective companion disks $D_{1}$ and $D_{2}$, and let $C$ be any oriented circle mutually orthogonal to $C_{1}$ and $C_{2}$.  Denote the points of intersection of $C$ with $C_{1}$ as $z_{1},z_{2}$ ordered so that the oriented sub-arc of $C$ from $z_{1}$ to $z_{2}$ lies in the disk $D_{1}$.  Similarly denote the ordered points of intersection of $C$ with $D_{2}$ as $w_{1},w_{2}$.  The \textit{general inversive distance} between $C_{1}$ and $C_{2}$, denoted as $\langle C_{1},C_{2}\rangle$, is defined in terms of the cross ratio
\begin{equation*}
[z_{1},z_{2};w_{1},w_{2}]=\frac{(z_{1}-w_{1})(z_{2}-w_{2})}{(z_{1}-z_{2})(w_{1}-w_{2})}
\end{equation*}
\noindent by
\begin{equation*}
\langle C_{1},C_{2}\rangle =2[z_{1},z_{2};w_{1},w_{2}]-1.
\end{equation*}
Subsequently, we drop the adjective \textit{general} and refer to the inversive distance $\langle C_{1}, C_{2} \rangle$ with its absolute value $|\langle C_{1}, C_{2} \rangle |$ the \textit{absolute inversive distance}.\footnote{The second author first learned of defining inversive distance in this way from his student, Roger Vogeler. He has looked for this in the literature and, unable to find it can only surmise that it is original with Prof.~Vogeler. The definition appeared in~\cite{Bowers:2003kr} in 2003.}
\end{Definition}

\begin{figure}

\begin{tikzpicture}
   
   \clip (-1.1, -6.5) rectangle (4.3, 3.7);
   
	\begin{scope}[decoration = {markings, mark = at position 0.25 with {\arrow{>}}, mark = at
     position 0.75 with {\arrow{>}}, }]
     \draw[postaction = decorate] (0,2.4) circle (1.0cm);
   \end{scope}
   
   \begin{scope}[decoration = {markings, mark = at position 0.25 with {\arrow{>}}, mark = at
     position 0.75 with {\arrow{>}}, }]
     \draw[postaction = decorate, pattern=north west lines,  pattern color=lightgray] (0,2.4) circle (0.5cm);
   \end{scope}
   
   \node at (0,1.1) {$d < -1$};
   
   
   \begin{scope}[decoration = {markings, mark = at position 0.25 with {\arrow{>}}, mark = at
     position 0.75 with {\arrow{>}}, }]
     \draw[postaction = decorate, pattern=north west lines,  pattern color=lightgray] (3,2.4) circle (1.0cm);
   \end{scope}
   
   \begin{scope}[decoration = {markings, mark = at position 0.25 with {\arrow{<}}, mark = at
     position 0.75 with {\arrow{<}}, }]
     \draw[postaction = decorate, fill=white] (3,2.4) circle (0.5cm);
   \end{scope}
   
   \node at (3,1.1) {$d > 1$};

   \begin{scope}[decoration = {markings, mark = at position 0.25 with {\arrow{>}}, mark = at
     position 0.75 with {\arrow{>}}, }]
     \draw[postaction = decorate, pattern=north west lines,  pattern color=lightgray] (0.5,-0.5) circle (0.5cm);
   \end{scope}
   
   \begin{scope}[decoration = {markings, mark = at position 0.25 with {\arrow{>}}, mark = at
     position 0.75 with {\arrow{>}}, }]
     \draw[postaction = decorate] (0,-0.5) circle (1.0cm);
   \end{scope}
   
   \node at (0,-1.8) {$d = -1$};
   
   
   \begin{scope}[decoration = {markings, mark = at position 0.25 with {\arrow{>}}, mark = at
     position 0.75 with {\arrow{>}}, }]
     \draw[postaction = decorate, pattern=north west lines,  pattern color=lightgray] (3,-0.5) circle (1.0cm);
   \end{scope}
   
   \begin{scope}[decoration = {markings, mark = at position 0.25 with {\arrow{<}}, mark = at
     position 0.75 with {\arrow{<}}, }]
     \draw[postaction = decorate, fill=white] (3.5,-0.5) circle (0.5cm);
   \end{scope}
   
   \node at (3,-1.8) {$d = 1$};
   
   \begin{scope}[decoration = {markings, mark = at position 0.25 with {\arrow{<}}, mark = at
     position 0.75 with {\arrow{<}}, }]
     \draw[postaction = decorate] (0.5,-3) circle (0.5cm);
   \end{scope}

   \begin{scope}[decoration = {markings, mark = at position 0.25 with {\arrow{>}}, mark = at
     position 0.75 with {\arrow{>}}, }]
     \draw[postaction = decorate, pattern=north west lines,  pattern color=lightgray] (-0.5,-3) circle (0.5cm);
   \end{scope}
   
   \node at (0,-3.9) {$d = -1$};

   
   \begin{scope}[decoration = {markings, mark = at position 0.25 with {\arrow{>}}, mark = at
     position 0.75 with {\arrow{>}}, }]
     \draw[postaction = decorate] (2.5,-3) circle (0.5cm);
   \end{scope}
   
   \begin{scope}[decoration = {markings, mark = at position 0.25 with {\arrow{>}}, mark = at
     position 0.75 with {\arrow{>}}, }]
     \draw[postaction = decorate, fill=white] (3.5,-3) circle (0.5cm);
   \end{scope}
   
   \node at (3,-3.9) {$d = 1$};

   
   \begin{scope}[decoration = {markings, mark = at position 0.25 with {\arrow{>}}, mark = at
     position 0.75 with {\arrow{>}}, }]
     \draw[postaction = decorate, pattern=north west lines,  pattern color=lightgray] (-0.45,-5.1) circle (0.55cm);
   \end{scope}
   
   \begin{scope}[decoration = {markings, mark = at position 0.25 with {\arrow{<}}, mark = at
     position 0.75 with {\arrow{<}}, }]
     \draw[postaction = decorate, fill=white] (0.45,-5.1) circle (0.55cm);
   \end{scope}
   
   \draw (-0.45,-5.1) circle (0.55cm);
   
   \node at (0,-6.0) {$-1 < d < 0$};
   
   
   \path [pattern=north west lines,  pattern color=lightgray] (1.8,-4.35) rectangle (4.2, -5.85);
   
   \begin{scope}[decoration = {markings, mark = at position 0.25 with {\arrow{<}}, mark = at
     position 0.75 with {\arrow{<}}, }]
     \draw[postaction = decorate, fill=white] (2.55,-5.1) circle (0.55cm);
   \end{scope}
   
   \begin{scope}[decoration = {markings, mark = at position 0.25 with {\arrow{<}}, mark = at
     position 0.75 with {\arrow{<}}, }]
     \draw[postaction = decorate, fill=white] (3.45,-5.1) circle (0.55cm);
     
   \draw[postaction = decorate] (2.55,-5.1) circle (0.55cm);
   \end{scope}
   
   \node at (3,-6.0) {$0 < d < 1$};
   
\end{tikzpicture}

\caption{Inversive distances $d=\langle C_{1}, C_{2} \rangle$. The shaded regions are the intersections $D_{1}\cap D_{2}$, the points common to the spanning disks $D_{1}$ and $D_{2}$ for both circles $C_{1}$ and $C_{2}$.}
\label{fig:ID}
\end{figure}
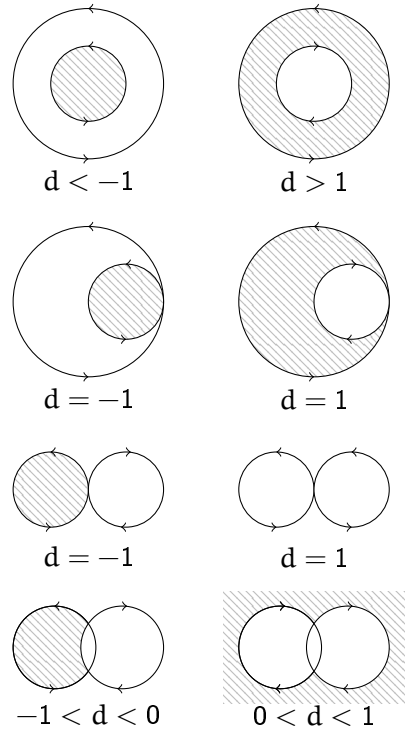
Recall that cross ratios of ordered $4$-tuples of points in $\widehat{\mathbb{C}}$ are invariant under M\"{o}bius transformations and that there is a M\"obius transformation taking an ordered set of four points of $\widehat{\mathbb{C}}$ to another ordered set of four if and only if the cross ratios of the sets agree.  This implies that which circle $C$ orthogonal to both $C_{1}$ and $C_{2}$ is used in the definition is irrelevant as a M\"{o}bius transformation that set-wise fixes $C_{1}$ and $C_{2}$ can be used to move any one orthogonal circle to another.  Which one of the two orientations on the orthogonal circle $C$ is used is irrelevant as the cross ratio satisfies $[z_{1},z_{2};w_{1},w_{2}]=[z_{2},z_{1};w_{2},w_{1}]$.  This equation also shows that the inversive distance is preserved when the orientation of both circles is reversed so that it is only the relative orientation of the two circles that is important for the definition. In fact, the general inversive distance is a relative conformal measure of the placement of an oriented circle pair on the Riemann sphere. By this we mean that two oriented circle pairs are inversive equivalent if and only if their inversive distances agree. All of this should cause one to pause to develop some intuition about how companion disks may overlap with various values of inversive distances. See Fig.~\ref{fig:ID} for some corrections to possible misconceptions. Finally, the inversive distance is symmetric with $\langle C_{1},C_{2}\rangle  = \langle C_{2},C_{1}\rangle$ since $[z_{1},z_{2};w_{1},w_{2}]=[w_{1},w_{2};z_{1},z_{2}]$.

The inversive distance is real since the cross ratio of points lying on a common circle is real and, in fact, every real value is realized as the inversive distance of some oriented circle pair. It is easy to give an intuitive understanding of the distance when $C_{1}$ and $C_{2}$ meet. In this case, the oriented angle $\alpha$ of overlap may be defined unambiguously as the angle between the unit tangent vectors to the circles at a point of overlap formed by one tangent pointing along the orientation of its parent circle and the other pointing against the orientation of its parent circle. This is precisely the \textit{lune angle} formed by the \textit{lune} $D_{1} \cap D_{2}$. The inversive distance is then $\langle C_{1}, C_{2} \rangle = \cos \alpha$. In particular, when the circles are tangent, the inversive distance is $\pm 1$, and when orthogonal, the inversive distance is zero. When the circles do not meet, the inversive distance lies outside the interval $[-1,1]$ and this is the range where the definitions in terms of hyperbolic, Euclidean, and spherical geometry give more geometric insight. We will give the spherical definition next, but we comment first that the general inversive distance is less than $-1$ if and only if one of the disks $D_{1}$ and $D_{2}$ is contained in the interior of the other.\footnote{We should mention that some of the older literature use $-\langle C_{1}, C_{2} \rangle$ for the inversive distance; for example~\cite{Sch:1962}.}  We refer the reader to~\cite{Bowers:2003kr} for a more detailed treatment. Notice that if the orientation of only one member of a circle pair is reversed, the inversive distance merely changes sign.  This follows from the immediate relation $[z_{1},z_{2};w_{2},w_{1}]=1-[z_{1},z_{2};w_{1},w_{2}]$. Despite its name, the inversive distance is not a metric as it fails to be non-negative and fails to satisfy the triangle inequality.\footnote{Some authors, perhaps more aptly, call the inversive \textit{distance} the inversive \textit{product} of $C_{1}$ and $C_{2}$.}

 The second definition is entirely in terms of the spherical metric.
 
\begin{Definition}[Inversive distance in the spherical metric]
In the $2$-sphere $\mathbb{S}^{2}$, the inversive distance may be expressed as\footnote{In both Luo~\cite{Luo:2011ex} and Ma-Schlenker~\cite{Ma:2012hl} there is a typo in the expression for the spherical formula for inversive distance. They report the negative of this formula.} 
\begin{equation}\label{EQ:sphereID} 
	\langle C_{1}, C_{2} \rangle = \frac{-\cos \sphericalangle ( p_{1},p_{2} ) + \cos(r_{1}) \cos(r_{2})}{\sin(r_{1}) \sin(r_{2})} =\frac{- p_{1}\cdot p_{2}  + \cos(r_{1}) \cos(r_{2})}{\sin(r_{1}) \sin(r_{2})}.
\end{equation}
Here, $\sphericalangle (p_{1},p_{2}) = \cos^{-1} (p_{1} \cdot p_{2})$ denotes the spherical distance between the centers, $p_{1}$ and $p_{2}$, of the respective companion disks, $p_{1} \cdot p_{2}$ the usual Euclidean inner product between the unit vectors $p_{1}$ and $p_{2}$, and $r_{1}$ and $r_{2}$ the respective spherical radii of the companion disks. Note that $r_{i} = \cos^{-1} (p_{i} \cdot q_{i})$ for any point $q_{i}$ on the circle $C_{i}$, for $i=1,2$.
\end{Definition}
Verifying the equivalence of the two definitions is an exercise in the use of trigonometric identities after a standard placement of $C_{1}$ and $C_{2}$ on $\mathbb{S}^{2}$ followed by stereographic projection. This standard placement is obtained by finding the unique great circle $C$ orthogonal to both $C_{1}$ and $C_{2}$ and then rotating the sphere so that this great circle is the equator, which then stereographically projects to the unit circle in the complex plane. We leave the details to the reader. In Section~\ref{sec:poincaredisk} we will recall the standard expression for inversive distance greater than unity in terms of hyperbolic geometry.

\subsection{Circle packings, frameworks, and realizations} We here generalize the language of circle packing and patterns of triangulations and quadrangulations of the sphere to that of circle realizations of oriented circle frameworks.\footnote{The material in this section is extended and further developed in~\cite{Bowers:rigidityOfCircle:techReport}.} Let $G$ be a graph, by which we mean a set of vertices $V= V(G)$ and simple edges $E = E(G)$. We disallow both loops and multiple edges. An oriented edge incident to the initial vertex $u$ and terminal vertex $v$ is denoted as $uv$, and $-uv$ means the oppositely oriented edge $vu$. We use the same notation, $uv$, to denote an un-oriented edge, context making the meaning clear. A \textit{circle framework with adjacency graph $G$}, or \textit{\textit{c}-framework} for short, is a collection $\mathcal{C} = \{ C_{u} : u \in V(G) \}$ of oriented circles in $\mathbb{S}^{2}$ indexed by the vertex set of $G$. We denote this by $G(\mathcal{C})$. Two \textit{c}-frameworks $G(\mathcal{C})$ and $G(\mathcal{C}')$ are \textit{equivalent} if $\langle C_{u}, C_{v} \rangle = \langle C_{u}', C_{v}'\rangle$ whenever $uv$ is an edge of $G$. Let $H$ be a subgroup of $\mathrm{Inv}(\mathbb{S}^{2})$. Two collections $\mathcal{C}$ and $\mathcal{C}'$ of oriented circles indexed by the same set are \textit{$H$-equivalent} or \textit{$H$-congruent} provided there is a mapping $T\in H$ such that $T(\mathcal{C}) = \mathcal{C}'$, respecting the common indexing and the orientations of the circles. When $H$ is not so important they are \textit{inversive-equivalent} or \textit{inversive-congruent}, and when $T$ can be chosen to be a M\"obius transformation, they are \textit{M\"obius-equivalent} or \textit{M\"obius-congruent}. The global rigidity theory of \textit{c}-frameworks concerns conditions on $G$ or $G(\mathcal{C})$ that ensure that the equivalence of the \textit{c}-frameworks $G(\mathcal{C})$ and $G(\mathcal{C}')$ guarantees their $H$-equivalence. Often we restrict our attention to \textit{c}-frameworks in a restricted collection $\mathscr{F}$ of \textit{c}-frameworks. In this paper, $\mathscr{F}$ is the collection of non-unitary, convex and proper \textit{c}-polyhedra, defined subsequently, and our interest is in M\"obius equivalence.

We develop next some terminology to describe how much separation there is between two oriented circles. Two oriented circles $C_{1}$ and $C_{2}$ are \textit{coupled} if one of the circles is contained in the companion disk of the other circle. This may occur in two distinct ways. It may be that one of the companion disks is contained in the other. This occurs exactly when $\langle C_{1} ,C_{2} \rangle \leq -1$, and in this case the pair remains coupled when both their orientations are reversed. On the other hand, it may be that neither companion disk is contained in the other as in the first row of Fig.~\ref{fig:ID}. In this case $\langle C_{1} ,C_{2} \rangle \geq 1$ and the two circles become uncoupled when both orientations are reversed. It follows then that $C_{1}$ and $C_{2}$ are \textit{uncoupled} precisely when either $C_{1} \cap C_{2}$ has two points, or the interiors of their companion disks are disjoint. If $C_{1}$ and $C_{2}$ are uncoupled, then $\langle C_{1} ,C_{2} \rangle > -1$. A stronger form of separation is that of segregation. The oriented circles $C_{1}$ and $C_{2}$ are \textit{segregated} if they they are uncoupled and \textit{overlap by at most $\pi/2$}. In particular, either the companion disks are disjoint, or when not disjoint, then the two bounding circles meet in an oriented angle of at most $\pi/2$.  In this case $\langle C_{1} ,C_{2} \rangle \geq 0$. Even stronger, the circles are \textit{separated} if the companion disks are disjoint, and in this case $\langle C_{1} ,C_{2} \rangle > 1$. Notice that $C_{1}$ and $C_{2}$ are uncoupled if segregated, and segregated if separated. A collection of oriented circles is respectively uncoupled, segregated, or separated if every distinct pair of the collection is uncoupled, segregated, or separated.

The oriented circles $C_{u}$ and $C_{v}$ of a \textit{c}-framework $G(\mathcal{C})$ are \textit{adjacent} provided $uv$ is an edge of $G$. The \textit{c}-framework $G(\mathcal{C})$
\begin{enumerate}
\item[(i)] is \textit{edge-uncoupled} if each pair of adjacent circles is uncoupled (implying that $\langle C_{u}, C_{v} \rangle > -1$ for all edges $uv$ of $G$);
\item[(ii)] is \textit{edge-segregated}  if each pair of adjacent circles is segregated so that the companion disks of all adjacent circles overlap by at most $\pi/2$ (implying that $\langle C_{u}, C_{v} \rangle \geq 0$ for all edges $uv$ of $G$); 
\item[(iii)] is \textit{edge-separated} if each pair of adjacent circles is separated so that the companion disks of all adjacent circles are disjoint (implying that $\langle C_{u}, C_{v} \rangle > 1$ for all edges $uv$ of $G$);
\item[(iv)] is \textit{non-unitary} if $\langle C_{u}, C_{v} \rangle \neq \pm 1$ for all edges $uv$ of $G$ (companion disks of adjacent circles are not tangent);
\item[(v)] has \textit{deep overlaps} if there is at least one edge $uv$ for which $\langle C_{u}, C_{v} \rangle < 0$ (implying that it is not edge-segregated).
\end{enumerate}

\begin{Definition}[Labeled graph and circle realization]
	An \textit{edge-label} is a real-valued function $\beta : E(G) \to \mathbb{R}$ defined on the edge set of $G$, and $G$ together with an edge-label $\beta$ is denoted as $G_{\beta}$ and called an \textit{edge-labeled graph}. The \textit{c}-framework $G(\mathcal{C})$ is a \textit{circle realization} of the edge-labeled graph $G_{\beta}$ provided $\langle C_{u}, C_{v} \rangle = \beta (uv)$ for every edge $uv$ of $G$, and we denote it as $G_{\beta}(\mathcal{C})$. 
\end{Definition}

Circle packings are circle realizations of edge-labeled graphs that arise as the $1$-skeletons of oriented triangulations of the $2$-sphere that also satisfy certain properties that ensure that the realizations of the triangular boundaries of faces respect orientation. The general definition allows for branch vertices and configurations of circles in which the open geodesic triangles cut out by connecting centers of adjacent circles overlap. There are subtleties in which we have no interest, so we adapt a restricted definition that corresponds to the circle packings that arise from spherical polyhedral metrics on triangulated surfaces. These are circle realizations of the edge-labeled $1$-skeleton $G_{\beta}= K^{(1)}_{\beta}$ of an oriented triangulation $K$ of $\mathbb{S}^{2}$ that produce oriented geodesic triangulations\footnote{By this we mean that the orientation of the geodesic triangulation determined by the packing is consistent with the orientation on $K$.} of the 2-sphere when adjacent circle centers are connected by geodesic arcs. The assumption here is that the centers of no two adjacent circles are antipodal, so that there is a unique geodesic arc connecting them, and that the centers of three circles corresponding to the vertices of a face of $K$ do not lie on a great circle. Now this causes no particular problems when all adjacent circles overlap nontrivially, the traditional playing field of circle packing, but does cause some real concern when adjacent circles may have inversive distance greater than unity. For example, a circle realization $\mathcal{C}$ may produce a geodesic triangulation of the sphere by connecting adjacent centers while its M\"obius image $T(\mathcal{C})$ may not. This is traced directly to the fact that neither circle centers nor radii, nor geodesic arcs, are M\"obius invariants in the inversive geometry of the sphere. This behavior does not occur for inversive distance circle packings of the Euclidean or hyperbolic planes (and surfaces), precisely because circle centers and geodesics are invariant under automorphisms and radii are invariant up to scale in Euclidean geometry and invariant in hyperbolic geometry. Our belief is that using centers and radii of circles in inversive geometry should be avoided except where these can be used to simplify computations (as in the use of the spherical definition of inversive distance). Our shift in this paper then is from inversive distance circle packings to inversive distance circle realizations. We are less concerned with possible underlying geodesic triangulations and more concerned with M\"obius-invariant quantities. For example, rather than working with a geodesic face formed by connecting the centers of three mutually adjacent circles, we are more interested in the existence of an ortho-circle, a circle mutually orthogonal to the three, which is a M\"obius invariant. Though our initial motivation was circle packing as reflected in the Main Theorem, our real interest has evolved to circle realizations as reflected in the general version of the Main Theorem.

\subsection{Circle polyhedra}
A precise definition of a \textit{circle polyhedron}, or \textit{\textit{c}-polyhedron} for short, is given subsequently, but it is, essentially, a \textit{c}-framework $G(\mathcal{C})$ where $G = P^{(1)}$, the $1$-skeleton of a Euclidean $3$-dimensional polyhedron $P$. There are conditions imposed that correspond to the fact that the faces of a Euclidean polyhedron are planar polygons. To describe this we borrow from the incidence geometry of the space of circles. It turns out that the space of all oriented circles on the $2$-sphere is topologically a twice punctured $3$-sphere homeomorphic to $\mathbb{S}^{2} \times \mathbb{R}$ that double covers the space of un-oriented circles, topologically a punctured projective space $\mathbb{RP}^{3*}$. What is more significant is that there is a natural incidence geometry of points, lines, and planes in each of these spaces of circles that mimics the incidence geometry of Euclidean space $\mathbb{E}^{3}$ in some significant ways. This is developed rather fully in~\cite{Bowers:isoInversive:techReport} and put to use in studying iso-inversive embeddings. We need not develop these incidence geometries here, but we do want to borrow one of the types of planes from the incidence geometry of circle space to use as a natural notion of a planar set of circles in $\mathbb{S}^{2}$. These are called \textit{hyperbolic \textit{c}-planes} in~\cite{Bowers:isoInversive:techReport} (as opposed to \textit{parabolic} and \textit{elliptic \textit{c}-planes}) and, in the older literature, are examples of bundles of circles. We will call these \textit{\textit{c}-planes}. A \textit{c}-plane $\Pi_{O}$ is determined by its \textit{generating circle} $O$, an un-oriented circle in $\mathbb{S}^{2}$, and is defined as the collection of all oriented circles in $\mathbb{S}^{2}$ that meet $O$ orthogonally, or what is the same, all circles $C$ for which $\langle C, O \rangle = 0$ when $O$ is given either orientation. Any collection $\mathcal{C}$ of oriented circles is said to be \textit{\textit{c}-planar} provided all the circles of the collection belong to a common \textit{c}-plane, or what is the same, all the circles of $\mathcal{C}$ have a common \textit{ortho-circle}, a circle $O$ orthogonal to each circle of $\mathcal{C}$. When the \textit{c}-planar collection $\mathcal{C}$ has at least three distinct elements that are not coaxial, the collection has a unique ortho-circle $O$ determining the unique \textit{spanning plane} $\Pi_{O}$ of $\mathcal{C}$.\footnote{We should point out that three distinct circles need not have a common ortho-circle, and therefore need not lie on a \textit{c}-plane as defined here. Nonetheless, three distinct circles that are not coaxial always lie on a unique \textit{c}-plane as defined in~\cite{Bowers:isoInversive:techReport}, as the three may determine a \textit{parabolic} or \textit{elliptic} \textit{c}-plane instead of a \textit{hyperbolic} one.}

An \textit{abstract polyhedral graph} $G$ is the $1$-skeleton of an \textit{abstract spherical polyhedron} $P$, by which we mean an oriented cellular decomposition of the $2$-sphere into a finite number of combinatorial polygons\footnote{We do not allow bi-gons, only $n$-gons for $n\geq 3$.} for which distinct polygonal faces are either disjoint, or meet at a set of vertices and full edges.\footnote{In the literature a \textit{polyhedral graph} is one isomorphic to the $1$-skeleton of a convex polyhedron in $\mathbb{E}^{3}$. By Steinitz's Theorem, these are characterized as precisely the 3-vertex-connected planar graphs. Our abstract polyhedral graphs are more general in that we allow for $1$-skeletons of non-convex polyhedra, for example, or for two faces to share two or more non-contiguous edges.} We can think of $P$ as set of topological polygons glued together along full edges to form a topological sphere. Writing $P= ( V, E , F )$, where $V= V(P)$ is the finite set of vertices, $E=E(P)$ is the finite set of edges, and $F=F(P)$ is the finite set of combinatorial polygons, we have $G = (V, E)$. Each $n$-gon face $f\in F$ may be denoted by listing its vertices in cyclic order as $f = v_{1}v_{2}\dots v_{n}$, respecting the orientation. Notice that if two faces share the two adjacent vertices $u$ and $v$, then one face determines the oriented edge $uv$, while the other, the oriented edge $vu$.
\begin{Definition}[Circle polyhedron]
	Let $G = P^{(1)}$ be an abstract polyhedral graph where $P$ is an oriented abstract spherical polyhedron. The \textit{c}-framework $G(\mathcal{C})$ is called a \textit{circle polyhedron based on $P$}, or a \textit{\textit{c}-polyhedron} for short, provided it is edge-uncoupled and, for each $n$-gon face $f=u_{1}\dots u_{n}$ of $P$, the \textit{\textit{c}-face} $\mathcal{C}_{f} = \{ C_{u_{i}}: i= 1, \dots , n\}$ is not coaxial but is \textit{c}-planar. We denote the unique ortho-circle for $\mathcal{C}_{f}$ as $O_{f}$. 
\end{Definition}
The primary interest of this paper is in the global rigidity of \textit{c}-polyhedra. The Ma-Schlenker example of~\cite{Ma:2012hl} and the Ma-Schlenker \textit{c}-octahedra constructed in ~\cite{bowersBowers:2016} show that in general \textit{c}-polyhedra are not globally rigid with respect to the inversive group of the $2$-sphere. We next define a notion of convexity for \textit{c}-polyhedra. The Ma-Schlenker \textit{c}-octahedra are not convex and our Main Theorem shows it is this lack of convexity that allows for the non-rigidity of these examples.

\begin{Definition}[Convex \textit{c}-polyhedron]
	Let $G(\mathcal{C})$ be a \textit{c}-polyhedron based on the abstract spherical polyhedron $P$ with face set $F=F(P)$. For any face $f\in F$ we say that $G(\mathcal{C})$ is \textit{convex with respect to $f$} provided the ortho-circle $O_{f}$ may be oriented so that every circle $C_{u} \in \mathcal{C}$ is segregated from the oriented ortho-circle $O_{f}$. $G(\mathcal{C})$ then is \textit{convex} provided it is convex with respect to each of its faces so that the ortho-circles $O_{f}$ for $f\in F$ may be oriented so that every circle $C_{u} \in \mathcal{C}$ is segregated from every oriented ortho-circle $O_{f}$ for $f\in F$. The oriented ortho-circles for $f\in F$ that meet this condition are denoted as $O_{f}^{+}$. To avoid unnecessary pathology, we make the further assumption that the circles corresponding to three consecutive vertices on the face $f$ are never coaxial.\footnote{This is the analogue with convex Euclidean polyhedra of avoiding three vertices lying on a line. The middle vertex is unnecessary.}
\end{Definition}
We have stated the Main Theorem in terms of M\"obius rigidity instead of inversive rigidity. For this we need to impose a condition to make sure that two \textit{c}-polyhedra based on the same abstract spherical polyhedron $P$ are oriented consistently, both with the orientation of $P$. We can insure this with the application of the antipodal map if needed.
\begin{Proposition}\label{prop:faceorder}
	Let $G(\mathcal{C})$ be a convex \textit{c}-polyhedron based on $P$.  The either (i) for every oriented $n$-gon face $f = u_{1}\dots u_{n}$ of $P$, the circles $C_{u_{1}}, \dots , C_{u_{n}}$ are met in that order as one progresses around $O_{f}^{+}$ in the direction of its orientation, starting at $C_{u_{1}}$; or (ii) for every oriented $n$-gon face $f = u_{1}\dots u_{n}$ of $P$, the circles $C_{u_{1}}, \dots , C_{u_{n}}$ are met in that order as one progresses around $O_{f}^{+}$ in the direction opposite of its orientation, starting at $C_{u_{1}}$.
\end{Proposition}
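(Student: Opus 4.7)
The dichotomy is established by analyzing one face at a time and then propagating the choice across shared edges. For a fixed face $f = u_1 \cdots u_n$, pass to the Poincar\'e disk model with $D = D_f^+$ and ideal boundary $O_f^+$: each $C_{u_i}$ restricts to a complete hyperbolic geodesic $\ell_i$ with ideal endpoints $a_i, b_i \in O_f^+$, and the companion disk $D_{u_i}$ restricts to a hyperbolic half-plane $H_i$ of $D$ bounded by $\ell_i$. Define the \emph{entry point} $e_i$ of $C_{u_i}$ on $O_f^+$ to be the point of $\{a_i, b_i\}$ at which the positively oriented $O_f^+$ crosses from outside $D_{u_i}$ into $D_{u_i}$. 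The first goal is to show that the cyclic order of the $e_i$'s on $O_f^+$ matches the face order $u_1, \ldots, u_n$ or its reverse.

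I would proceed via projective duality: let $V_i$ be the pole of $\ell_i$ with respect to $O_f$, a point outside $\overline{D}$ since $\ell_i$ is a proper chord of $D$. The stipulation in the definition of a convex \textit{c}-polyhedron that no three circles corresponding to consecutive face vertices are coaxial translates via the pole-polar correspondence to the non-collinearity of every consecutive triple $(V_{i-1}, V_i, V_{i+1})$, so $V_1 V_2 \cdots V_n$ is a non-degenerate hyperideal polygon in the Klein model of $D$. The key claim is that this polygon is \emph{simple and convex} in $\mathbb{RP}^2$. Granted the claim, the Klein-model pole-polar correspondence yields that the endpoint pairs $(a_i, b_i)$ appear on $O_f^+$ in cyclic order $a_1, b_1, a_2, b_2, \ldots, a_n, b_n$ (or its reverse, depending on which half-plane of $D$ is $H_i$), and the entry points $e_i$ inherit this cyclic order, matching the face order $u_1, \ldots, u_n$ or its reverse. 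To prove the claim, I would invoke the full convexity of the \textit{c}-polyhedron: for each edge $u_i u_{i+1}$ of $f$, the opposite face $f'$ across this edge carries vertex circles that are segregated from $O_f^+$, constraining the relative positions of the corresponding $V_j$'s and ruling out self-intersections of the sides $\overline{V_i V_{i+1}}$; the non-coaxiality of consecutive triples blocks the remaining degenerate collinear configurations.

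For the global consistency step, consider two adjacent faces $f, f'$ sharing an edge $uv$, oriented $u \to v$ in $f$ and $v \to u$ in $f'$ by the consistent orientation of $P$ on $\mathbb{S}^2$. The circles $C_u, C_v$ appear in both face structures, and both $O_f$ and $O_{f'}$ are ortho-circles of the pair $(C_u, C_v)$. The orientations $O_f^+, O_{f'}^+$ determined by convexity both point ``away from the polyhedron'' in a compatible sense, and a local analysis at the shared edge shows that if the entry points of $C_u, C_v$ on $O_f^+$ occur in the order matching the face order of $f$, then on $O_{f'}^+$ they occur in the order matching the face order of $f'$. Hence the match/reverse choice at $f$ coincides with the choice at $f'$. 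Since the dual graph of $P$ is connected (as $P$ is a cellular decomposition of $\mathbb{S}^2$), the dichotomy propagates globally, giving either (i) for every face or (ii) for every face.

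\emph{Main obstacle.} The technical heart is the convexity of the dual hyperideal polygon $V_1 \cdots V_n$ for each face. Non-coaxiality of consecutive triples rules out the easy degenerate collinearities, but verifying that no two sides $\overline{V_i V_{i+1}}, \overline{V_j V_{j+1}}$ of the dual polygon cross requires carefully combining the segregation conditions supplied by convexity at the faces of $P$ neighboring $f$; the delicate point is showing that these local constraints suffice to force global convexity of the hyperideal polygon rather than just local non-degeneracy.
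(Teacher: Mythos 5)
A preliminary remark: the paper states Proposition~\ref{prop:faceorder} with no proof at all --- it is invoked later (in the proof of Theorem~\ref{Thm:Congruence ConditionI}) but never argued --- so there is no proof of record to compare yours against, and I can only assess your proposal on its own terms.

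On those terms, what you have written is a plan rather than a proof: both of its load-bearing steps are left as unverified claims. The first is the assertion that the dual hyperideal polygon $V_{1}\cdots V_{n}$ is simple and convex in $\mathbb{RP}^{2}$. The non-coaxiality hypothesis only excludes collinear consecutive triples, and convexity of the dual polygon is, under the pole--polar correspondence, exactly equivalent to the unlinked cyclic ordering of the endpoint pairs $(a_{i},b_{i})$ on $O_{f}$ --- that is, to the local ordering statement you are trying to prove, restated in dual language. You gesture at deriving it from the segregation of the neighboring faces' vertex circles from $O_{f}^{+}$, but no mechanism is given for converting segregation (an inversive-distance inequality) into the non-crossing of the sides $\overline{V_{i}V_{i+1}}$ and $\overline{V_{j}V_{j+1}}$, and you yourself label this the ``main obstacle.'' The second gap is the propagation step: the sentence ``a local analysis at the shared edge shows that\dots'' is precisely where the interaction among the orientation of $P$, the orientation of $\mathbb{S}^{2}$, and the convexity-determined orientations $O_{f}^{+}$ and $O_{f'}^{+}$ must be pinned down; this is the whole point of the proposition (it is what licenses the antipodal-map normalization immediately after it), and it is asserted rather than argued. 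A promising structure for that local analysis --- which your write-up does not use --- is that $O_{f}$ and $O_{f'}$ both lie in the coaxial family orthogonal to the one generated by $C_{u}$ and $C_{v}$, as is exploited in the proof of Theorem~\ref{Thm:Congruence ConditionI}. As written, the proposal reduces the proposition to two claims, each of which is essentially as hard as the proposition itself.
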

By replacing the \textit{c}-polyhedron $G(\mathcal{C})$ by its image $G(\mathcal{C}^{*})$ under the antipodal mapping of the sphere $\mathbb{S}^{2}$ if necessary, and then reversing all circle orientations, we may assume that property (i) of Proposition~\ref{prop:faceorder} holds. We then say that the \textit{c}-polyhedron $G(\mathcal{C})$ is \textit{oriented consistently} with the orientation of the base polyhedron $P$ for $G(\mathcal{C})$, which will be assumed without comment in the remainder of the paper. Without the assumption of consistent orientation, we would need to work in terms of inversive rigidity, allowing for an orientation reversing map to match orientations before applying a M\"obius transformation.

Before pursuing the definition of properness for \textit{c}-polyhedra, we pause to develop prerequisite terminology for hyperideal polygons in the hyperbolic plane.


\subsection{Hyperideal polygons and complex angles}\label{sec:poincaredisk}
We assume a fixed orientation on the hyperbolic plane $\mathbb{H}^{2}$, and when using an open disk $D$ in $\mathbb{S}^{2}$ or $\widehat{\mathbb{C}}$ as a Poincar\'e-disk model of $\mathbb{H}^{2}$, we assume the orientation of $\mathbb{H}^{2} = D$ is inherited from that of $\mathbb{S}^{2}$.  Let $\ell_{1}$ and $\ell_{2}$ be oriented lines in the hyperbolic plane $\mathbb{H}^{2}$. We define the \textit{complex angle} $\theta = \theta(\ell_{1}, \ell_{2})$ between $\ell_{1}$ and $\ell_{2}$. When the lines meet at the point $p$, $\theta = \theta (\ell_{1}, \ell_{2})$ is the angle between the unit tangent vectors to the lines at $p$ formed by one tangent pointing along the orientation of its parent line and the other pointing against the orientation of its parent line. This is precisely the \textit{lune angle} formed by the \textit{lune} $h_{1} \cap h_{2}$, where $h_{i}$ is the oriented half-plane with $\partial ^{+} h_{i} = \ell_{i}$, the half-plane whose positively oriented boundary is $\ell_{i}$, for $i =1,2$. When $\ell_{1}$ and $\ell_{2}$ are parallel, meeting at an ideal point $p \in \partial \mathbb{H}^{2}$, the angle $\theta =\theta (\ell_{1}, \ell_{2})$ is either zero or $\pi$, depending on the relative orientations. When $\ell_{1}$ and $\ell_{2}$ are ultra-parallel, there is no point of intersection, ideal or finite. When the orientations are \textit{consistent} with one another, meaning that neither of $h_{1}$ and $h_{2}$ is contained in the other, we define $\theta(\ell_{1}, \ell_{2}) = {i} \,d_{\mathbb{H}^{2}} (\ell_{1}, \ell_{2})$, this also called the \textit{imaginary angle} between the lines. Of course $i$ is the imaginary unit and $d_{\mathbb{H}^{2}} (\ell_{1}, \ell_{2})$ is the hyperbolic distance between the lines, the length of the unique geodesic arc orthogonal to both $\ell_{1}$ and $\ell_{2}$, marked as the green dotted lines of Fig.~\ref{fig:hyperlines6}. When the orientations are inconsistent, the complex angle is a \textit{phase shift} of the imaginary angle by $\pi$, by which we mean that $\theta (\ell_{1}, \ell_{2}) = \pi + i d_{\mathbb{H}^{2}}(\ell_{1},\ell_{2})$. In a Poincar\'e disk model of the hyperbolic plane (which includes the upper-half-plane model), for $i=1,2$, $\ell_{i}$ is the intersection with the disk of an oriented circle $C_{i}$ in $\mathbb{C}$ that meets the boundary of the disk orthogonally. In this case, the inversive distance satisfies $\langle C_{1}, C_{2} \rangle = \cos \theta(\ell_{1}, \ell_{2})$. Notice that the range of complex angles is the curve $\Theta = i\mathbb{R}_{+} \cup [0 ,\pi\,] \cup (\pi + i\mathbb{R}_{+} )$ in the complex plane\footnote{$\mathbb{R}_{\pm} = \{ x\in \mathbb{R} : \pm x >0 \}$, respectively the sets of positive and negative real numbers.} on which the complex cosine function $\cos$ is a homeomorphism onto the real line $\mathbb{R}$. When we use the inverse function $\cos^{-1}$, we mean the inverse of this homeomorphism.

\begin{figure}
\centering
\begin{subfigure}[t]{0.45\textwidth}
	\centering
	\begin{tikzpicture}[line cap=round,line join=round,>=triangle 45,x=0.5cm,y=0.5cm]
\clip(-3.5,-3.5) rectangle (3.5,3.5);
\draw [line width=1.7pt] (0.,-0.02) circle (1.64cm);
\draw [shift={(3.1876595412987103,2.4888869320995317)},line width=1.0pt] plot[domain=2.866682812109121:4.750106932851553,variable=\t]({1.*2.3868990320691204*cos(\t r)+0.*2.3868990320691204*sin(\t r)},{0.*2.3868990320691204*cos(\t r)+1.*2.3868990320691204*sin(\t r)});
\draw [shift={(-2.611735369123523,2.455413256516329)},line width=1.0pt] plot[domain=4.377652143067875:6.05145783060787,variable=\t]({1.*1.4800109556579497*cos(\t r)+0.*1.4800109556579497*sin(\t r)},{0.*1.4800109556579497*cos(\t r)+1.*1.4800109556579497*sin(\t r)});
\draw [shift={(-1.1447914011904725,3.26)},line width=1.0pt] plot[domain=-1.5939397725248288:0.,variable=\t]({1.*1.1447914011904727*cos(\t r)+0.*1.1447914011904727*sin(\t r)},{0.*1.1447914011904727*cos(\t r)+1.*1.1447914011904727*sin(\t r)});
\draw [shift={(2.5328000152792263,-2.9092597645064924)},line width=1.0pt] plot[domain=1.267562174466263:2.640078689604578,variable=\t]({1.*2.001124160114651*cos(\t r)+0.*2.001124160114651*sin(\t r)},{0.*2.001124160114651*cos(\t r)+1.*2.001124160114651*sin(\t r)});
\draw [shift={(-0.7874568535451341,-4.2497979049415955)},line width=1.0pt] plot[domain=0.9736786681253566:1.9183851272700465,variable=\t]({1.*2.784399147543959*cos(\t r)+0.*2.784399147543959*sin(\t r)},{0.*2.784399147543959*cos(\t r)+1.*2.784399147543959*sin(\t r)});
\draw [shift={(-3.1525200300893337,-1.7028016674997803)},line width=1.0pt] plot[domain=0.04999814551398941:1.6529638491565948,variable=\t]({1.*1.4183807642006712*cos(\t r)+0.*1.4183807642006712*sin(\t r)},{0.*1.4183807642006712*cos(\t r)+1.*1.4183807642006712*sin(\t r)});
\end{tikzpicture}
\caption{The convex hyperideal polygon defined by six hyperbolic lines.}
\label{fig:hyperpoly6}
\end{subfigure}
\quad
\begin{subfigure}[t]{0.45\textwidth}
	\centering
	\definecolor{qqccqq}{rgb}{0.,0.69215686274509803,0.2}
\begin{tikzpicture}[line cap=round,line join=round,>=triangle 45,x=0.5cm,y=0.5cm]
\clip(-3.5,-3.5) rectangle (3.5,3.5);
\draw [line width=1.7pt] (0.,-0.02) circle (1.64cm);
\draw [shift={(-1.1447914011904745,3.26)},line width=1.0pt] plot[domain=-2.469986497336053:0.,variable=\t]({1.*1.1447914011904736*cos(\t r)+0.*1.1447914011904736*sin(\t r)},{0.*1.1447914011904736*cos(\t r)+1.*1.1447914011904736*sin(\t r)});
\draw [shift={(-2.6117353691235228,2.4554132565163282)},line width=1.0pt] plot[domain=-1.905533164111711:0.38831871817246555,variable=\t]({1.*1.480010955657949*cos(\t r)+0.*1.480010955657949*sin(\t r)},{0.*1.480010955657949*cos(\t r)+1.*1.480010955657949*sin(\t r)});
\draw [shift={(-3.1525200300893315,-1.702801667499778)},line width=1.0pt] plot[domain=-0.6723293713358327:1.6529638491565963,variable=\t]({1.*1.4183807642006683*cos(\t r)+0.*1.4183807642006683*sin(\t r)},{0.*1.4183807642006683*cos(\t r)+1.*1.4183807642006683*sin(\t r)});
\draw [shift={(-0.7874568535451343,-4.249797904941598)},line width=1.0pt] plot[domain=0.5197946225862194:2.2536745215993577,variable=\t]({1.*2.784399147543961*cos(\t r)+0.*2.784399147543961*sin(\t r)},{0.*2.784399147543961*cos(\t r)+1.*2.784399147543961*sin(\t r)});
\draw [shift={(2.5328000152792267,-2.909259764506493)},line width=1.0pt] plot[domain=1.2675621744662633:3.3135307514135377,variable=\t]({1.*2.0011241601146517*cos(\t r)+0.*2.0011241601146517*sin(\t r)},{0.*2.0011241601146517*cos(\t r)+1.*2.0011241601146517*sin(\t r)});
\draw [shift={(3.1876595412987103,2.4888869320995317)},line width=1.0pt] plot[domain=2.866682812109121:4.750106932851553,variable=\t]({1.*2.3868990320691204*cos(\t r)+0.*2.3868990320691204*sin(\t r)},{0.*2.3868990320691204*cos(\t r)+1.*2.3868990320691204*sin(\t r)});
\draw [shift={(-3.667217832595169,0.4569294489406403)},line width=1.0pt,dotted,color=qqccqq]  plot[domain=-0.6438439087145857:0.37090862791196577,variable=\t]({1.*1.7080832330336952*cos(\t r)+0.*1.7080832330336952*sin(\t r)},{0.*1.7080832330336952*cos(\t r)+1.*1.7080832330336952*sin(\t r)});
\draw [shift={(0.6224598461281102,3.477252036412751)},line width=1.0pt,dotted,color=qqccqq]  plot[domain=3.962242862889318:4.8637080421743715,variable=\t]({1.*1.3637551342652905*cos(\t r)+0.*1.3637551342652905*sin(\t r)},{0.*1.3637551342652905*cos(\t r)+1.*1.3637551342652905*sin(\t r)});
\draw [shift={(3.7312779859576266,-0.47264848672367893)},line width=1.0pt,dotted,color=qqccqq]  plot[domain=2.6675970598300136:3.42672191285405,variable=\t]({1.*1.8354634458428318*cos(\t r)+0.*1.8354634458428318*sin(\t r)},{0.*1.8354634458428318*cos(\t r)+1.*1.8354634458428318*sin(\t r)});
\end{tikzpicture}
	\caption{The green dotted segments denote the hyperideal vertices, the unique shortest paths between consecutive non-intersecting lines.}
\label{fig:hyperlines6}
\end{subfigure}

\caption{A convex hyperideal $6$-gon.}
\label{fig:hyperidealpolygon6}
\end{figure}
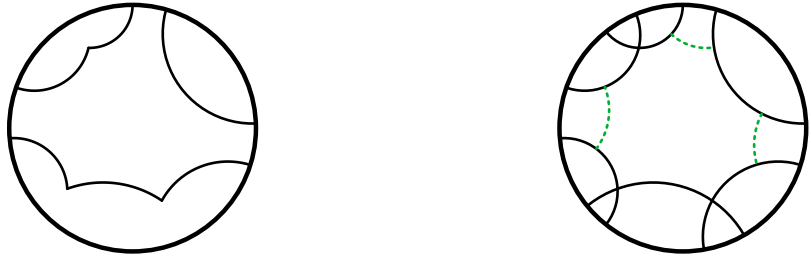
Let $H = \{h_1, \dots, h_n\}$ be a set of half-planes in $\mathbb{H}^2$ such that the region $P = h_1\cap  \dots \cap h_n$ is non-empty and the boundary line $\ell_{i}$ of each $h_i$ supports a non-empty segment, ray, or line on the boundary of $P$. The lines $\ell_{i}$ for $i = 1, \dots, n$ are oriented consistent with the orientation $P$ inherits from $\mathbb{H}^{2}$ so that $\partial^{+} h_{i} = \ell_{i}$. If $P$ is bounded, then it is a compact convex polygon in $\mathbb{H}^2$. Whether or not $P$ is bounded, we call $P$ a {\em convex hyperideal polygon}. As is usual, we sometimes refer to the oriented polygonal boundary $\partial^{+} P$ as a convex hyperideal polygon. An example is shown in Fig.~\ref{fig:hyperpoly6}. Suppose that the half-planes are indexed so that $\ell_{1} , \dots , \ell_{n}$ are in cyclic counter-clockwise order along $P$. Two consecutive lines $\ell_{i-1}, \ell_{i}$ may intersect either at a point in $\mathbb{H}^2$ forming a {\em finite vertex} of $P$, or at an ideal point at infinity forming an {\em ideal vertex}, or not at all. In the latter case, the hyperbolic segment orthogonal to the two consecutive lines is called the \textit{hyperideal vertex} of $P$ determined by $\ell_{i-1}$ and $\ell_{i}$. The complex angle $\theta_{i} = \theta(\ell_{i-1}, \ell_{i})$ is the \textit{complex interior angle} of the polygon $P$ determined by $\ell_{i-1}$ and $\ell_{i}$; see Fig.~\ref{fig:hyperlines6} . In Section~\ref{Section:Green-black} we use hyperideal polygons to construct \textit{green-black} polygons that then are used in Section~\ref{sec:greenblackpolygons} to generalize the Cauchy Arm Lemma for use in the proof of the Main Theorem.

Moving up one dimension, the \textit{complex dihedral angle} between two oriented hyperbolic planes in the hyperbolic $3$-space $\mathbb{H}^{3}$ is defined analogously to the complex angle between oriented lines. When the oriented planes $h_{1}$ and $h_{2}$ meet, $\theta = \theta (h_{1}, h_{2} )$ is the dihedral angle between them, the angle between their normal vectors at a point of intersection, one along the orientation of the plane and the other against the orientation. When the planes meet at an ideal point, the complex dihedral angle is zero or $\pi$ depending on relative orientations. When the planes are disjoint, the complex dihedral angle between them is the imaginary angle $\theta = \theta (h_{1}, h_{2} ) =  i\,d_{\mathbb{H}^{3}}(h_{1}, h_{2})$, or this phase-shifted by $\pi$, where $d_{\mathbb{H}^{3}}(h_{1}, h_{2})$ is the hyperbolic distance between the planes, the length of the unique geodesic arc orthogonal to both $h_{1}$ and $h_{2}$. The presence or absence of the phase shift depends on relative orientations. In either the Poincar\'e ball or the upper-half-space model of $\mathbb{H}^{3}$, the oriented boundaries of the planes $h_{1}$ and $h_{2}$ are oriented circles $C_{1}$ and $C_{2}$ on the sphere at infinity, and $\langle C_{1}, C_{2} \rangle = \cos \theta (h_{1}, h_{2})$. This motivates the next definition.
\begin{Definition}[Complex dihedral angle]
	Let $G(\mathcal{C})$ be a convex \textit{c}-polyhedron based on the abstract spherical polyhedron $P$ with face set $F=F(P)$. For each pair of faces $f, g \in F$ that share an edge, define the \textit{complex dihedral angle} between the \textit{adjacent} \textit{c}-faces $\mathcal{C}_{f}$ and $\mathcal{C}_{g}$ at that edge to be $\cos^{-1} \langle O_{f}^{+}, O_{g}^{+} \rangle$.
\end{Definition}

By the preceding discussion, the complex dihedral angle between the adjacent \textit{c}-faces $\mathcal{C}_{f}$ and $\mathcal{C}_{g}$ is the complex dihedral angle between the oriented hyperbolic planes $h_{f}$ and $h_{g}$ in the Poincar\'e ball model of hyperbolic $3$-space, where $h_{f}$ is the oriented hyperbolic plane in the ball $B^{3} = \mathbb{H}^{3}$ whose oriented boundary at infinity is $O_{f}^{+}$.

\begin{Theorem}\label{Thm:Congruence ConditionI}
	Let $G(\mathcal{C})$ and $G(\mathcal{C}')$ be two convex non-unitary \textit{c}-polyhedra both based on the abstract spherical polyhedron $P$. Then $G(\mathcal{C})$ and $G(\mathcal{C}')$ are M\"obius-congruent with one another if and only if the following conditions hold.
	\begin{enumerate}
		\item[(i)] $G(\mathcal{C})$ and $G(\mathcal{C}')$ have M\"obius-congruent \textit{c}-faces. This means that for each face $f$ of $P$, there is an M\"obius transformation $T\in \text{M\"ob}(\mathbb{S}^{2})$ such that $T(\mathcal{C}_{f}) = \mathcal{C}_{f}'$, taking corresponding oriented circles to corresponding oriented circles. 
		\item[(ii)] Corresponding complex dihedral angles between adjacent \textit{c}-faces agree; equivalently, for each pair of adjacent faces $f,g \in F(P)$, $\langle O_{f}^{+},O_{g}^{+} \rangle = \langle {O'_{f}}^{+},{O'_{g}}^{+} \rangle$, where the prime denotes the oriented ortho-circles for $G(\mathcal{C}')$.
	\end{enumerate}
\end{Theorem}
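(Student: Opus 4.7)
The forward direction is immediate: any $U \in \text{M\"ob}(\mathbb{S}^{2})$ with $U(\mathcal{C}) = \mathcal{C}'$ restricts on each face to a M\"obius-congruence of the corresponding c-faces, and sends the uniquely determined oriented ortho-circle $O_f^+$ to ${O'_f}^+$ (the ``$+$'' orientation is M\"obius-invariantly characterized by convexity plus consistent orientation of $P$). Since inversive distance is M\"obius-invariant, the complex dihedral angles agree. My plan for the converse is induction on the dual graph of $P$, which is connected since $P$ decomposes $\mathbb{S}^2$. First, apply (i) to a distinguished face $f_0$ to obtain $T_0 \in \text{M\"ob}(\mathbb{S}^{2})$ with $T_0(\mathcal{C}_{f_0}) = \mathcal{C}'_{f_0}$; after replacing $\mathcal{C}$ by $T_0(\mathcal{C})$ (which preserves all the hypotheses), I may assume $\mathcal{C}_{f_0} = \mathcal{C}'_{f_0}$. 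It then suffices to show: if $\mathcal{C}_f = \mathcal{C}'_f$ and $g$ is a face sharing an edge $uv$ with $f$, then $\mathcal{C}_g = \mathcal{C}'_g$.

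The heart of the step is to first match the oriented ortho-circles, $O_g^+ = {O'_g}^+$. I will pass to the Poincar\'e ball model of $\mathbb{H}^{3}$, writing $h_C$ for the oriented half-space bounded at infinity by an oriented circle $C$. Since $C_u, C_v, O_f^+$ are shared, so are $h_u, h_v, h_f$. Non-unitarity of $uv$ ensures $h_u$ and $h_v$ are not tangent at infinity, so they admit a unique common perpendicular geodesic $\gamma$, and a hyperbolic plane is orthogonal to both $h_u$ and $h_v$ exactly when it contains $\gamma$; in particular $h_f$, $h_g$, and $h'_g$ all contain $\gamma$. By hypothesis (ii), $h_g$ and $h'_g$ make the same complex dihedral angle with $h_f$ along $\gamma$, so each is obtained from $h_f$ by a rotation about $\gamma$ of prescribed magnitude, leaving only two possibilities: $h_g = h'_g$, or $h_g$ is the reflection of $h'_g$ through $h_f$. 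The hard part is excluding the reflection, and this is the main obstacle of the argument: convexity of the c-polyhedra together with the consistent orientation of the common base $P$ (Proposition~\ref{prop:faceorder}) determines combinatorially on which side of the oriented edge $uv$ the face $g$ must lie; since $P$ is the same for both polyhedra, the same side is selected in both cases, so reflection is ruled out and $O_g^+ = {O'_g}^+$.

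Once $O_g^+ = {O'_g}^+$ is established, the whole face is forced. Invoking (i), pick $S \in \text{M\"ob}(\mathbb{S}^{2})$ with $S(\mathcal{C}_g) = \mathcal{C}'_g$. Then $S$ fixes the oriented circles $C_u, C_v$ and sends $O_g^+$ to itself, so it preserves the c-plane $\Pi_{O_g}$. Viewed on $\Pi_{O_g}$ as a Poincar\'e disk, $S$ is an orientation-preserving hyperbolic isometry fixing two distinct oriented hyperbolic lines (the intersections of $C_u, C_v$ with $\Pi_{O_g}$); such an isometry is necessarily the identity, so $S$ restricts to the identity on $\Pi_{O_g}$ and hence fixes every circle of $\mathcal{C}_g$ individually. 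Thus $\mathcal{C}_g = \mathcal{C}'_g$, completing the inductive step. Propagating through the connected dual graph of $P$ yields $\mathcal{C} = \mathcal{C}'$, and combined with the initial normalization this gives the required M\"obius-congruence.
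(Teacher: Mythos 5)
Your overall architecture matches the paper's: reduce to propagating the congruence across a single shared edge $uv$, observe that condition (ii) together with orthogonality to $C_u$ and $C_v$ pins $O_g^+$ down to one of two candidates interchanged by reflection through $O_f$ (equivalently, through the plane $h_f$), exclude the wrong candidate, and then rigidify the rest of the face. The forward direction is fine, and your final step is fine as well --- the paper instead notes that the two M\"obius maps agree on the four points of $C_u\cap O_g$ and $C_v\cap O_g$, but your ``orientation-preserving isometry of the disk fixing two oriented lines is the identity'' argument is an equivalent way to pin down the transformation.

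The genuine gap is at exactly the step you flag as ``the hard part.'' You assert that convexity together with the consistent orientation of the common base $P$ ``determines combinatorially on which side of the oriented edge $uv$ the face $g$ must lie,'' and that this rules out the reflected candidate. That is not a proof, and the combinatorics of $P$ cannot by itself select a geometric side: both candidate positions for $O_g$ are orthogonal to $C_u$ and $C_v$, realize the prescribed complex dihedral angle with $O_f^+$, and are abstractly indistinguishable. What actually excludes the reflection is a quantitative consequence of convexity-as-segregation, isolated in the paper as Lemma~\ref{Lem:3coaxialcircles}: if $O^+$, $A^+$, $B^+$ lie in the orthogonal complement of a non-parabolic coaxial family with $\langle O^+,A^+\rangle=\langle O^+,B^+\rangle$, and $C$ is orthogonal to $O$ but not a member of that coaxial family, then $C$ cannot be segregated from both $A^+$ and $B^+$. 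One applies this with $O=O_f$, $A^+={O_g'}^{+}$, $B^+$ the reflected candidate, and $C$ a circle of the shared face $\mathcal{C}_f$ outside the coaxial family of $C_u,C_v$; convexity of \emph{both} polyhedra forces $C$ to be segregated from both candidates, a contradiction. That lemma requires a real two-case analysis (hyperbolic versus elliptic pencil, with normalizations and an angle computation), none of which appears in your sketch. A secondary slip: your claim that a plane is orthogonal to both $h_u$ and $h_v$ exactly when it contains the common perpendicular $\gamma$ is correct only when $C_u$ and $C_v$ are disjoint; when they overlap, $h_u\cap h_v$ is a geodesic $\lambda$ and the planes orthogonal to both $h_u$ and $h_v$ are precisely those orthogonal to $\lambda$, which share no common geodesic. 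The two-candidate conclusion survives in both cases, but the stated reason does not.
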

\begin{proof}
	The forward direction is obvious. For the reverse direction it is enough to verify the following: If $f, g \in F(P)$ are faces that share an edge $uv$ and $T\in \text{M\"ob}(\mathbb{S}^{2})$ is a M\"obius transformation such that $T(\mathcal{C}_{f}) = \mathcal{C}_{f}'$, taking corresponding oriented circles to corresponding oriented circles, then $T(\mathcal{C}_{g}) = \mathcal{C}_{g}'$, taking corresponding oriented circles to corresponding oriented circles. To verify this, let $f$, $g$, $uv$, and $T$ be as in the preceding sentence and observe that $T(O_{f}^{+}) = {O'_{f}}^{+}$. This uses property (i) of Proposition~\ref{prop:faceorder} to ensure that $T$ preserves the orientation on the ortho-circles. The circles $C_{u}'$ and $C_{v}'$ belong to a unique coaxial family $\mathcal{A}$ of circles in $\mathbb{S}^{2}$ and the ortho-circles $O_{f}'$ and $O_{g}'$ belong to its orthogonal complement $\mathcal{A}^{\perp}$, the collection of circles orthogonal to the circles in the family $\mathcal{A}$. Now $T(O_{f}^{+}) = {O_{f}'}^{+}$ and the question we ask is what of the image $T(O_{g}^{+})$: does $T(O_{g}^{+}) = {O_{g}'}^{+}$? Note first that $T(O_{g})$, since it is orthogonal to both $C_{u}'$ and $C_{v}'$, belongs to the family $\mathcal{A}^{\perp}$. Since the \textit{c}-polyhedra are non-unitary, the coaxial family $\mathcal{A}$ is not parabolic, and hence neither is $\mathcal{A}^{\perp}$. This implies that one of $\mathcal{A}$ and $\mathcal{A}^{\perp}$ is elliptic and the other hyperbolic.
	
	The fact we use here is that if one chooses any two distinct circles $O$ and $A$ in an elliptic or hyperbolic coaxial family and orients them, say to obtain the oriented circles $O^{+}$ and $A^{+}$, then there is exactly one circle $B$ distinct from $O$ and $A$ in that same coaxial family that admits an orientation to give an oriented circle $B^{+}$ so that the inversive distances satisfy $\langle O^{+}, A^{+}\rangle = \langle O^{+}, B^{+} \rangle$. Of course then also $\langle O^{+}, A^{-}\rangle = \langle O^{+}, B^{-} \rangle$, where the minus-superscript denotes the opposite orientation. By this observation, there is precisely one oriented circle $B^{+}$ other than $A^{+} = {O_{g}'}^{+}$ of the orthogonal complement $\mathcal{A}^{\perp}$ whose inversive distance to $T(O_{f}^{+}) = {O_{f}'}^{+}$ is $\langle {O_{f}'}^{+}, {O_{g}'}^{+} \rangle$. It follows that $T(O_{g}^{+})$ is equal to either $B^{+}$ or to ${O_{g}'}^{+}$, and our claim is that the convexity of the \textit{c}-polyhedra $G(\mathcal{C})$ and  $G(\mathcal{C}')$ precludes $B^{+}$ as the image of $O_{g}^{+}$. We argue by contradiction. Suppose that $T({O_{g}'}^{+}) = B^{+}$.  Let $C$ be an oriented circle in the \textit{c}-face $\mathcal{C}_{f}$ that is not in the coaxial family determined by $C_{u}$ and $C_{v}$ so that its image $C' = T(C) \notin \mathcal{A}$. Since $G(\mathcal{C})$ is convex, $C$ is segregated from $O_{g}^{+}$ and therefore its image $T(C) = C'$ is segregated from the image $T(O_{g}^{+}) = B^{+}$. But also since $T(\mathcal{C}_{f}) = \mathcal{C}_{f}'$, $C' = T(C)$ is an oriented circle in $G(\mathcal{C}')$ and since the \textit{c}-polyhedron $G(\mathcal{C}')$ is convex, $C'$ is segregated from ${O_{g}'}^{+}$. Thus the oriented circle $C'= T(C)$ is orthogonal to ${O_{f}'}^{+}$ and segregated from both $B^{+}$ and $A^{+} = {O_{g}'}^{+}$. Since $C$ is orthogonal to the ortho-circle ${O_{f}'}^{+}$, this violates Lemma~\ref{Lem:3coaxialcircles} that is stated and proved below.

	We have verified that $T(O_{g}^{+}) = {O_{g}'}^{+}$. Now let $T' \in \text{M\"ob}(\mathbb{S}^{2})$ be a M\"obius transformation such that $T'(\mathcal{C}_{g}) = \mathcal{C}_{g}'$, taking corresponding oriented circles to corresponding oriented circles. Let $a$ and $b$ be the points of intersection of $C_{u}$ with the ortho-circle $O_{g}$ and $c$ and $d$ the points of intersection of $C_{v}$ with $O_{g}$. Since the maps $T$ and $T'$ both take $C_{u}$ to $C_{u}'$, $C_{v}$ to $C_{v}'$, and $O_{g}^{+}$ to ${O_{g}'}^{+}$, preserving all orientations, they must agree on the four points $a$, $b$, $c$, and $d$. But then $T=T'$ and we have $T(\mathcal{C}_{g}) = T'(\mathcal{C}_{g}) = \mathcal{C}_{g}'$.	
\end{proof}
\begin{Lemma}\label{Lem:3coaxialcircles}
	Let $\mathcal{A}$ be a hyperbolic or elliptic coaxial family. Let $O$, $A$, and $B$ be three pairwise distinct circles that belong to the orthogonal complement $\mathcal{A}^{\perp}$. Orient the circles to obtain  $O^{+}$, $A^{+}$ and $B^{+}$ and let $C$ be an oriented circle that does not belong to $\mathcal{A}$. Suppose that $\langle O^{+}, A^{+} \rangle = \langle O^{+}, B^{+} \rangle$. If $O$ and $C$ are orthogonal and $C$ is segregated from $A^{+}$, then $C$ is not segregated from $B^{+}$.
\end{Lemma}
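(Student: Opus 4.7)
The plan is to exploit the inversion $\sigma$ in the circle $O$ as a natural symmetry of the coaxial family $\mathcal{A}^{\perp}$, identify $B^{+}$ as the orientation-reversed image of $\sigma(A^{+})$, and reduce the lemma to the sharp identity $\langle C, B^{+}\rangle = -\langle C, A^{+}\rangle$.

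First I would observe that since $O \in \mathcal{A}^{\perp}$, the inversion $\sigma$ fixes the two base (or limit) points of $\mathcal{A}^{\perp}$, hence preserves the family setwise. Thus $\sigma(A) \in \mathcal{A}^{\perp}$, and it is distinct from $O$ (since $\sigma(O)=O\ne A$) and from $A$ (since distinct members of a hyperbolic or elliptic coaxial family are never mutually orthogonal, so $A$ is not fixed by $\sigma$). Because $\sigma$ is orientation-reversing on $\mathbb{S}^{2}$ and fixes $O$ pointwise, it swaps the two companion disks of $O$, giving $\sigma(O^{+})=O^{-}$. The inversive invariance of $\langle\cdot,\cdot\rangle$ under $\sigma$ then yields $\langle O^{+},\sigma(A^{+})\rangle = -\langle O^{+},A^{+}\rangle$, so the oriented circle obtained from $\sigma(A^{+})$ by reversing orientation has inversive distance $\langle O^{+},A^{+}\rangle$ with $O^{+}$. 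By the uniqueness property noted in the proof of Theorem~\ref{Thm:Congruence ConditionI}, this orientation-reversed circle must coincide with $B^{+}$.

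Next I would show that $\sigma$ fixes the oriented circle $C$. Since $C\perp O$, $\sigma$ preserves $C$ setwise. Pick $p\in C\cap O$ and decompose $T_{p}\mathbb{S}^{2}=T_{p}O\oplus T_{p}C$, which is orthogonal by hypothesis. Then $d\sigma_{p}$ is the identity on $T_{p}O$ (as $\sigma$ fixes $O$ pointwise) and has determinant $-1$, forcing it to act as $-1$ on $T_{p}C$. In particular $d\sigma_{p}$ preserves the direction of $T_{p}O$, which is normal to $C$ at $p$. Hence $\sigma$ carries each side of $C$ near $p$ to itself, and by connectedness $\sigma$ preserves each of the two companion disks of $C$. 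Thus $\sigma(C)=C$ as oriented circles.

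Combining these with the inversive invariance of $\langle\cdot,\cdot\rangle$,
\[
\langle C, B^{+}\rangle \;=\; -\langle C, \sigma(A^{+})\rangle \;=\; -\langle \sigma(C), A^{+}\rangle \;=\; -\langle C, A^{+}\rangle.
\]
Segregation of $C$ from $A^{+}$ gives $\langle C, A^{+}\rangle \ge 0$; if equality held, then $C\perp A$ and $C\perp O$ would force $C$ to be orthogonal to two distinct elements of $\mathcal{A}^{\perp}$, placing $C\in(\mathcal{A}^{\perp})^{\perp}=\mathcal{A}$ and contradicting the hypothesis. Hence $\langle C,A^{+}\rangle>0$, so $\langle C,B^{+}\rangle<0$ and $C$ is not segregated from $B^{+}$. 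The main subtlety throughout is orientation bookkeeping: tracking that $B^{+}$ is $\sigma(A^{+})$ with orientation reversed (because $\sigma(O^{+})=O^{-}$) and that $\sigma$ preserves the chosen companion disk of $C$ despite being globally orientation-reversing; the tangent-space computation at $p\in C\cap O$ is the cleanest way to settle the latter.
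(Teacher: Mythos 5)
Your argument is correct in outline and takes a genuinely different route from the paper's. The paper proves Lemma~\ref{Lem:3coaxialcircles} by a two-case normalization (according as $\mathcal{A}$ is hyperbolic or elliptic), placing $O^{+}$ on the real axis and reading off that $C$ meets $A$ and $B$ in supplementary oriented angles $\theta$ and $\pi-\theta$. You instead exploit the inversion $\sigma$ in $O$ as a symmetry of $\mathcal{A}^{\perp}$ that carries $A^{+}$ to (the orientation reversal of) $B^{+}$ while fixing $C$, and deduce the sharp identity $\langle C,B^{+}\rangle=-\langle C,A^{+}\rangle$. This is more conceptual and treats uniformly the sub-cases the paper handles separately (whether or not $C$ actually meets $A$ and $B$). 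One caution on bookkeeping: your statements $\sigma(O^{+})=O^{-}$ and $\sigma(C)=C$ are both correct only in the convention where the image of a companion disk is declared to be the companion disk of the image; in the directed-curve convention the two conclusions swap. Either convention, applied consistently, yields the same identity, but the two must not be mixed.

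There is, however, one genuinely false step: the claim that distinct members of a hyperbolic or elliptic coaxial family are never mutually orthogonal. Two orthogonal lines through the origin are distinct members of the elliptic pencil with base points $0$ and $\infty$. So in the case where $\mathcal{A}$ is hyperbolic (hence $\mathcal{A}^{\perp}$ elliptic) you cannot conclude $\sigma(A)\neq A$ this way, and the same degenerate possibility $\langle O^{+},A^{+}\rangle=0$ also undermines your identification of $B^{+}$ (rather than $B^{-}$) with the reversal of $\sigma(A^{+})$, since both orientations of $B$ would then have inversive distance $0$ to $O^{+}$. The repair is short: if $\langle O^{+},A^{+}\rangle=0$ then $B$ would also be orthogonal to $O$; but a hyperbolic pencil has no two orthogonal members (distinct members are disjoint), and an elliptic pencil contains exactly one member orthogonal to a given one, so $B=A$, contradicting that $O$, $A$, $B$ are pairwise distinct. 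Hence $\langle O^{+},A^{+}\rangle\neq 0$ throughout, which restores both $\sigma(A)\neq A$ (so the uniqueness fact quoted from the proof of Theorem~\ref{Thm:Congruence ConditionI} forces $\sigma(A)=B$) and the determination of the orientation $B^{+}$. With that patch, your proof is complete.
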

\begin{proof}

\begin{figure}
\centering
\begin{subfigure}[t]{0.45\textwidth}
\begin{tikzpicture}[line cap=round,line join=round,>=triangle 45,x=1.0cm,y=1.0cm]
\clip(-4.5,-0.5) rectangle (2.3,4.5);
\draw(-2.1818017241379315,1.6928534482758617) circle (2.004901797087114cm);
\draw(-0.4736688436419516,1.7097656550134461) circle (1.0103268130050396cm);
\draw(0.5140275862068969,1.719544827586207) circle (1.3962952018472123cm);
\draw (-2.5079016479109884,3.6710571930095393)-- (-2.2356909424491627,3.5335364136737515);
\draw (-2.246768206836871,3.8326225521418826)-- (-2.5079016479109884,3.6710571930095393);
\draw (-1.5767315100972494,1.371700237798299)-- (-1.4806796406060723,1.6279761454501356);
\draw (-1.4806796406060723,1.6279761454501356)-- (-1.3157286264827226,1.4270644858377441);
\draw (1.7031729687369748,1.86471455643474)-- (1.827878926603495,2.1922338861010093);
\draw (1.827878926603495,2.1922338861010093)-- (2.0564540349884113,1.9530348229975991);
\begin{scriptsize}
\draw [fill=black] (-0.44,0.7) circle (1.5pt);
\draw[color=black] (-0.462564740187225,0.27908866667210125) node {a};
\draw [fill=black] (-0.46,2.72) circle (1.5pt);
\draw[color=black] (-0.48051619949058993,3.1333706959071224) node {b};
\draw[color=black] (-3.406604065939072,3.8334776087383537) node {$O^+$};
\draw[color=black] (-1.8627785658496891,2.4332637830758905) node {$B^+$};
\draw[color=black] (1.2248724343290776,3.2769823703340415) node {$A^+$};
\end{scriptsize}
\end{tikzpicture}
\caption{}
\end{subfigure}
\quad
\begin{subfigure}[t]{0.45\textwidth}
\begin{tikzpicture}[line cap=round,line join=round,>=triangle 45,x=1.0cm,y=1.0cm]
\clip(-5.,0.) rectangle (0.2,5.5);
\draw [line width=0.4pt,domain=-5.:0.2] plot(\x,{(--17.7568-0.*\x)/7.16});
\draw [domain=-5.:0.2] plot(\x,{(-2.1248-3.34*\x)/2.16});
\draw [domain=-5.:0.2] plot(\x,{(--10.3168--2.48*\x)/1.92});
\draw(-2.78,2.48) circle (1.42cm);
\draw [line width=0.4pt] (-2.78,2.48)-- (0.82,2.48);
\draw [line width=0.4pt] (-0.89,2.48) -- (-0.98,2.36);
\draw [line width=0.4pt] (-0.89,2.48) -- (-0.98,2.6);
\draw (-2.9991932335718934,1.4993754066363048)-- (-4.608770331815226,-0.5796616785946656);
\draw (-3.868259967957658,0.37683087472135973) -- (-3.9107294832214374,0.5425002450746614);
\draw (-3.868259967957658,0.37683087472135973) -- (-3.697234082165681,0.37721348296697893);
\draw (-2.926560311480798,3.541625666826789)-- (-4.120013146916795,5.387057366065785);
\draw (-3.580306220694571,4.5525105449629) -- (-3.409926549677437,4.537652291226568);
\draw (-3.580306220694571,4.5525105449629) -- (-3.636646908720157,4.391030741666007);
\begin{scriptsize}
\draw [fill=black] (-2.78,2.48) circle (1.5pt);
\draw[color=black] (-3.44,3.47) node {C};
\draw[color=black] (-0.64,2.09) node {$O^+$};
\draw[color=black] (-4.4,0.87) node {$A^+$};
\draw[color=black] (-4.4,4.95) node {$B^+$};
\end{scriptsize}
\end{tikzpicture}
\caption{}
\end{subfigure}

\caption{Case 1.}
\label{fig:3coaxialCase1}
\end{figure}
\textit{Case 1: $\mathcal{A}$ is hyperbolic.}
	Since $\mathcal{A}$ is hyperbolic, its orthogonal complement $\mathcal{A}^{\perp}$ is elliptic and as the circles $O$, $A$, and $B$ belong to $\mathcal{A}^{\perp}$, they pass through two distinct common points, say $a$ and $b$. Fig.~\ref{fig:3coaxialCase1}(a) pictures the oriented circles $O^{+}$, $A^{+}$ and $B^{+}$ with $\langle O^{+}, A^{+} \rangle = \langle O^{+}, B^{+} \rangle$. Applying appropriate M\"obius transformations to normalize, we may assume that $a = 0$, $b= \infty$, $O^{+}$ is the real axis of $\widehat{\mathbb{C}}$ oriented towards the right, and $A^{+}$ and $B^{+}$ are oriented lines through the origin both meeting the real axis at the same acute angle, but oriented so that $\langle O^{+}, A^{+} \rangle = \langle O^{+}, B^{+} \rangle$, as in Fig.~\ref{fig:3coaxialCase1}(b). Since $C$ is orthogonal to $O$ and not a member of the coaxial family $\mathcal{A}$, it is either a circle in the normalized picture centered on the real axis at a point $x \neq 0$, or a vertical line orthogonal to the real axis. Now if $C$ is tangent to or disjoint from $A$, then it is tangent to or disjoint from $B$, and hence is contained in the companion disk of either $A^{+}$ or $B^{+}$. Therefore $C$ is coupled with one of $A^{+}$ and $B^{+}$ and cannot be segregated from both $A^{+}$ and $B^{+}$. We may assume then that the radius $r$ of $C$ is greater than $|x|/\sqrt{2}$ ($r=\infty$ when a $C$ is a line), forcing $C$ to intersect each of $A$ and $B$ in exactly two distinct points (one of which may be $\infty$).  Since  $x\neq 0$, the smaller angle of intersection of $C$ with both $A$ and $B$ is $\theta < \pi/2$. From the symmetry of the normalized picture of Fig.~\ref{fig:3coaxialCase1}(b), whether $C$ is oriented clockwise or counterclockwise, the oriented angle of $C$ with one of $A^{+}$ and $B^{+}$ is $\theta$, and with the other is $\pi - \theta$. Since $\theta < \pi/2$, we have $\pi - \theta > \pi/2$ so $C$ is not segregated from one of $A^{+}$ and $B^{+}$. We conclude that if $C$ is in fact segregated from $A^{+}$, then it cannot be segregated from $B^{+}$. 

\begin{figure}
\begin{tikzpicture}[line cap=round,line join=round,>=triangle 45,x=1.0cm,y=1.0cm]
\clip(-3.5,-3.5) rectangle (5.25,4.);
\draw (2.56,0.)-- (3.56,0.);
\draw (3.2203165345021394,0.) -- (3.06,-0.20612125864560785);
\draw (3.2203165345021394,0.) -- (3.06,0.20612125864560785);
\draw (0.,-3.1)-- (0.,3.48);
\draw (-3.,0.)-- (2.56,0.);
\draw(-0.74,-1.52) circle (1.1131380542330545cm);
\draw(-0.74,1.3) circle (1.1131380542330545cm);
\draw (0.3702370572051252,-1.439688079401289)-- (0.34710634720835687,-1.7593242938797773);
\draw (0.3471005365289329,-1.7594045898834434) -- (0.1530880408944277,-1.5846289736262087);
\draw (0.3471005365289329,-1.7594045898834434) -- (0.5642553635190545,-1.6143833996548584);
\draw (0.37172628260292906,1.2439553717862022)-- (0.36111963815884285,1.463131450807897);
\draw (0.35867378964856317,1.513672551642388) -- (0.5723032836820346,1.3635066308100363);
\draw (0.35867378964856317,1.513672551642388) -- (0.1605426370797367,1.3435801917840637);
\begin{scriptsize}
\draw[color=black] (4.04208769707304,0.10142627777782323) node {$O^+$};
\draw[color=black] (0.7441475587433157,2.910782691910552) node {$C$};
\draw [fill=black] (-0.74,1.3) circle (1.5pt);
\draw[color=black] (-1.2101873380446693,1.3839585537949386) node {$i$};
\draw [fill=black] (-0.74,-1.52) circle (1.5pt);
\draw[color=black] (-1.332333269093918,-1.3948613775754781) node {$-i$};
\draw[color=black] (-2.828620924447219,-1.4253978603377906) node {$B^+$};
\draw[color=black] (-2.950766855496468,1.3839585537949386) node {$A^+$};
\end{scriptsize}
\end{tikzpicture}
\caption{Case 2.}
\label{fig:3coaxialCase2}
\end{figure}
	\textit{Case 2: $\mathcal{A}$ is elliptic.}
	Since $\mathcal{A}$ is elliptic, its orthogonal complement $\mathcal{A}^{\perp}$ is hyperbolic and as the circles $O$, $A$, and $B$ belong to $\mathcal{A}^{\perp}$, after applying an appropriate M\"obius transformation to normalize, we may assume that $O^{+}$ is the real axis of $\widehat{\mathbb{C}}$ oriented toward the right and $A$ and $B$ are circles centered at $\pm i$ of equal radii $r<1$, as in Fig.~\ref{fig:3coaxialCase2}. To satisfy that $\langle O^{+}, A^{+} \rangle = \langle O^{+}, B^{+} \rangle$, one of $A^{+}$ and $B^{+}$ is oriented clockwise and the other counterclockwise.  Since $C$ is orthogonal to $O$, it is a circle in the normalized picture centered on $O$ or a vertical line orthogonal to the real axis. If a circle, we may apply an elliptic M\"obius transformation that set-wise fixes $O$, $A$, and $B$ so that $C$ is a vertical line. Since $C$ is not a member of the coaxial family $\mathcal{A}$, $C$ is a vertical line that misses the origin. The orientations on $A^{+}$ and $B^{+}$, one clockwise and the other counterclockwise, then imply that $C$ meets $A$ and $B$ in two distinct points, since otherwise it is coupled with one of $A^{+}$ and $B^{+}$. But now, just as in the preceding paragraph, the oriented angle of $C$ with one of $A^{+}$ and $B^{+}$ is $\theta < \pi/2$, and with the other is $\pi - \theta > \pi/2$, so $C$ is not segregated from one of $A^{+}$ and $B^{+}$. We conclude that if $C$ is in fact segregated from $A^{+}$, then it cannot be segregated from $B^{+}$. 
\end{proof}

The content of the Main Theorem is that when the convex \textit{c}-polyhedra are non-unitary and proper, whose definition awaits, item (ii) of Theorem~\ref{Thm:Congruence ConditionI} is superfluous as it will be seen to follow as a consequence of item (i).

\begin{figure}
\begin{subfigure}[t]{0.45\textwidth}
\definecolor{yqyqyq}{rgb}{0.5019607843137255,0.5019607843137255,0.5019607843137255}
\begin{tikzpicture}[line cap=round,line join=round,>=triangle 45,x=1.0cm,y=1.0cm]
\clip(-4.,-4.) rectangle (4.,4.);
\fill[color=yqyqyq,fill=yqyqyq,fill opacity=0.10000000149011612] (-0.7218962237322837,0.) -- (0.6261250253543001,0.) -- (0.8867891081587809,0.23537874901246347) -- (1.2945035955881155,0.5603655125553741) -- (1.7758802553415025,0.8863382763079066) -- (2.0335487789565776,1.0384108325295003) -- (2.3801788407282816,1.2208488040605854) -- (2.675248223212492,1.3575886505854433) -- (2.502342793666282,1.6547750732309288) -- (2.255277338613824,1.9783134549244075) -- (1.9360549475046303,2.2916568766381333) -- (1.5316162624663088,2.5795642315222036) -- (1.0902078781120599,2.7948965602509155) -- (0.6760305845130185,2.9228381153945193) -- (0.138923652347681,2.99678164349997) -- (-0.2806430977018987,2.9868443969702008) -- (-0.7742234758964897,2.898375063612499) -- (-1.1793449902354023,2.758467943262466) -- (-1.552258016727583,2.567195950741772) -- (-2.101588401791455,2.1408704279931654) -- (-2.3658957979099173,1.8445967237941727) -- (-2.6671691798099433,1.373393085126015) -- (-2.3068877622031008,1.2057424971093131) -- (-1.9483543537943855,1.006206562694461) -- (-1.6644893041717281,0.8223042933181905) -- (-1.34,0.58) -- (-0.9784918643399432,0.26245661218427285) -- cycle;
\draw(0.,0.) circle (3.cm);
\draw [shift={(-4.842141318528024,-3.7715739881440986)}] plot[domain=0.15264845104875174:1.1708433296897374,variable=\t]({1.*5.58580253761566*cos(\t r)+0.*5.58580253761566*sin(\t r)},{0.*5.58580253761566*cos(\t r)+1.*5.58580253761566*sin(\t r)});
\draw [shift={(5.510261289439234,-5.146787655542229)}] plot[domain=1.9818314319013437:2.798743928696315,variable=\t]({1.*7.095365404078314*cos(\t r)+0.*7.095365404078314*sin(\t r)},{0.*7.095365404078314*cos(\t r)+1.*7.095365404078314*sin(\t r)});
\draw (-3.,0.)-- (3.,0.);
\draw (0.14182821714502836,0.) -- (0.,-0.18235056490075102);
\draw (0.14182821714502836,0.) -- (0.,0.18235056490075102);
\draw (-1.6644893041717217,0.8223042933181861)-- (-1.34,0.58);
\draw (-1.3886036877517873,0.6162935913817428) -- (-1.6113485088710253,0.5550423353724266);
\draw (-1.3886036877517873,0.6162935913817428) -- (-1.3931407953006965,0.8472619579457592);
\draw (1.4341316737077845,0.6609121966495488)-- (1.7758802553415176,0.8863382763079164);
\draw (1.7233975082400972,0.8517193179139516) -- (1.7054126406556467,0.621407537416014);
\draw (1.7233975082400972,0.8517193179139516) -- (1.5045992883936532,0.9258429355414504);
\draw (-0.2755925938930677,1.9323377694549733) node[anchor=north west] {$P$};
\draw (-3.3823059218317844,2.283531450004568) node[anchor=north west] {$\ell_1$};
\draw (-3.7875293993890082,0.6356226412718549) node[anchor=north west] {$\ell_2$};
\draw (-1.6803673160914439,-2.822284367216461) node[anchor=north west] {$\ell_3$};
\end{tikzpicture}
\caption{}
\end{subfigure}
\quad
\begin{subfigure}[t]{0.45\textwidth}
\definecolor{ffffff}{rgb}{1.,1.,1.}
\definecolor{qqccqq}{rgb}{0.,0.8,0.}
\definecolor{yqyqyq}{rgb}{0.5019607843137255,0.5019607843137255,0.5019607843137255}
\begin{tikzpicture}[line cap=round,line join=round,>=triangle 45,x=1.0cm,y=1.0cm]
\clip(-4.,-4.) rectangle (4.,4.);
\fill[color=yqyqyq,fill=yqyqyq,fill opacity=0.10000000149011612] (-0.8143376073705848,0.) -- (0.8184509075423592,0.) -- (0.9081984683330226,0.6704884795897088) -- (1.018969788190466,1.1486924144430237) -- (1.1868301920651836,1.669637696179419) -- (1.2910168695083666,1.9313505129568593) -- (1.44526842191119,2.266324344169953) -- (1.5883387185570297,2.5370232493900073) -- (2.4966792621314027,1.6510298346352827) -- (2.250172988258842,1.9738359546130195) -- (1.9316730905199657,2.2864701887787344) -- (1.5281497681779292,2.573725925352301) -- (1.0877404197315017,2.788570894220757) -- (0.674500530140123,2.916222880311707) -- (0.13860922760619346,2.989999052647882) -- (-0.2800079205239747,2.980084297005146) -- (-0.7724711823730441,2.8918151955503673) -- (-1.176675788573821,2.7522247258167334) -- (-1.2047715689340701,2.7400423937415557) -- (-1.2802214797495213,2.705612650694989) -- (-1.337150167859901,2.6779350778921165) -- (-1.5378210539856745,2.56795896749368) -- (-1.3638774154213866,2.2297202124578512) -- (-1.2080608054641848,1.8692230427985765) -- (-1.0983122174470046,1.5646650511935005) -- (-0.923950157984626,0.9057716263100045) -- (-0.8933168745007469,0.7422329561163529) -- cycle;
\fill[color=yqyqyq,fill=yqyqyq,fill opacity=0.10000000149011612] (0.061022648425962,0.) -- (-0.8143376073705848,0.) -- (-0.8198994914419904,-0.5854779667550621) -- cycle;
\fill[color=ffffff,fill=ffffff,fill opacity=1.0] (0.061022648425962,0.) -- (0.8862692843350407,0.5484749452839581) -- (0.8184509075423592,0.) -- cycle;
\draw(0.,0.) circle (2.9932101250685386cm);
\draw [shift={(-6.5782833103832,-0.23800945923435257)}] plot[domain=-0.43563335952855375:0.5079639881721255,variable=\t]({1.*5.7688576486131655*cos(\t r)+0.*5.7688576486131655*sin(\t r)},{0.*5.7688576486131655*cos(\t r)+1.*5.7688576486131655*sin(\t r)});
\draw [shift={(6.974304277252246,-0.4827347842714867)}] plot[domain=2.630593196288861:3.514380164519066,variable=\t]({1.*6.174752106871515*cos(\t r)+0.*6.174752106871515*sin(\t r)},{0.*6.174752106871515*cos(\t r)+1.*6.174752106871515*sin(\t r)});
\draw (-1.0983122174470061,1.5646650511935065)-- (-0.923950157984626,0.9057716263100045);
\draw (-0.9776138597621632,1.10856035434609) -- (-1.1739771676659572,1.1921246313827736);
\draw (-0.9776138597621632,1.10856035434609) -- (-0.848285207765675,1.2783120461207371);
\draw (1.0638162258379298,1.3043563089307628)-- (1.1868301920651874,1.6696376961794304);
\draw (1.1671379491365477,1.611162955175451) -- (1.2849651480348725,1.4332351937458239);
\draw (1.1671379491365477,1.611162955175451) -- (0.9656812698682454,1.5407588113643684);
\draw (-0.2571572331015243,1.9112974770090485) node[anchor=north west] {$P$};
\draw (-2.0539723766245346,3.633245322885268) node[anchor=north west] {$\ell_1$};
\draw (-3.1769818413264157,-1.4078193853320702) node[anchor=north west] {$\ell_2$};
\draw (1.3150560174811097,-2.5308288500339526) node[anchor=north west] {$\ell_3$};
\draw (-2.4739992447514423,-1.684824794980805)-- (2.511391916195831,1.6285630156917625);
\draw [line width=1.2pt,color=qqccqq] (-0.8143376073705848,0.)-- (0.8184509075423592,0.);
\draw [color=yqyqyq] (0.061022648425962,0.)-- (-0.8143376073705848,0.);
\draw [color=yqyqyq] (-0.8198994914419904,-0.5854779667550621)-- (0.061022648425962,0.);
\draw [color=ffffff] (0.061022648425962,0.)-- (0.8862692843350407,0.5484749452839581);
\draw [line width=2.pt,color=qqccqq] (-0.8143376073705848,0.)-- (0.8184509075423592,0.);
\draw (-0.8198994914419904,-0.5854779667550621)-- (0.8862692843350407,0.5484749452839581);
\draw (-1.8071420802981402,-1.2416185692652273)-- (-0.8198994914419904,-0.5854779667550621);
\draw (-1.2044044188266618,-0.841027410802665) -- (-1.2202796837461634,-1.0538407399230927);
\draw (-1.2044044188266618,-0.841027410802665) -- (-1.406761887993968,-0.7732557960971964);
\begin{scriptsize}
\draw[color=ffffff] (0.8284185827769611,0.23926116289735716) node {$v_1$};
\end{scriptsize}
\end{tikzpicture}
\caption{}
\end{subfigure}
\caption{Improper green-black polygons.}
\label{fig:remedy}
\end{figure}
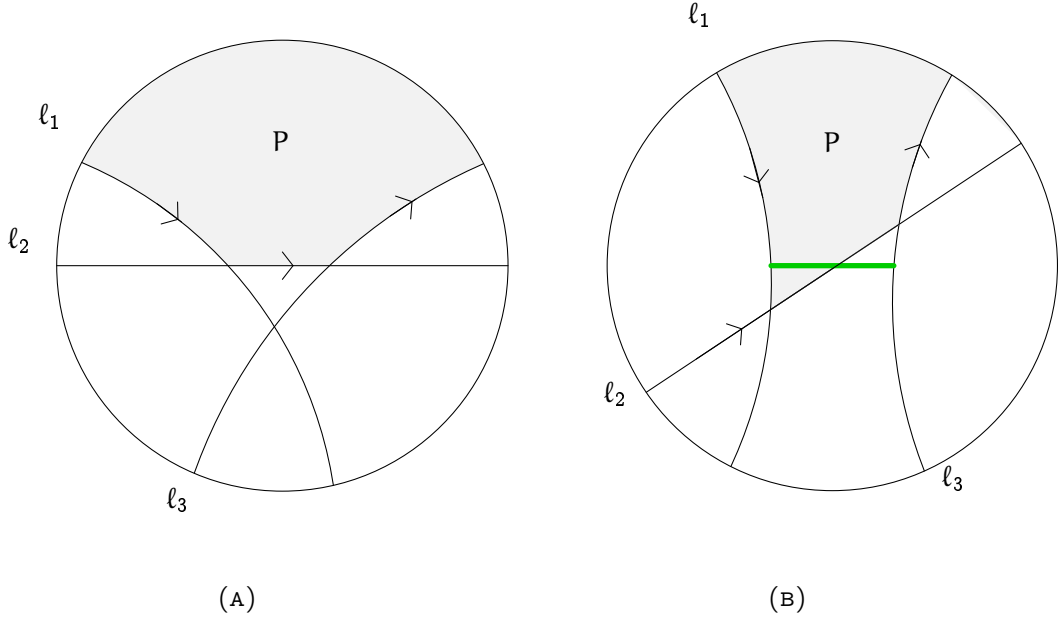

\subsection{Green-black polygons and proper \textit{c}-polyhedra}\label{Section:Green-black}
Fig.~\ref{fig:hyperidealpolygon6} might give the reader the impression that a convex hyperideal polygon may be ``completed'' to a compact convex hyperbolic polygon by adding the hyperideal vertices as new sides. Fig.~\ref{fig:remedy} should provide a remedy to this impression. Fig.~\ref{fig:remedy}(a) shows three oriented lines $\ell_{1}$, $\ell_{2}$, and $\ell_{3}$ that pairwise intersect at three finite vertices so there is no hyperideal vertex to exploit. With the orientations shown, the convex region $P$ is unbounded. Of course if the orientations are reversed on the three lines, then these oriented lines cut out a bounded convex polygon, a triangle. More serious is the example pictured in Fig.~\ref{fig:remedy}(b) in which the hyperideal vertex between $\ell_{1}$ and $\ell_{3}$ fails to lie in the convex region $P$ as it meets $\ell_{2}$ at a point separating its meeting points with $\ell_{1}$ and $\ell_{3}$. This inspires the next definition. 
\begin{Definition}[Proper hyperideal polygon]
The convex hyperideal polygon $P$ determined by the cyclically ordered oriented lines $\ell_{1}, \dots, \ell_{n}$ is said to be \textit{proper} provided the following two conditions are met.
\begin{enumerate}
\item Any hyperideal vertex, say for instance the hyperbolic line segment $s_{i,i+1}$ meeting the two consecutive lines $\ell_{i}$ and $\ell_{i+1}$ orthogonally, lies in the the convex hyperideal polygon $P$, as pictured in Fig.~\ref{fig:hyplines}.
\item The oriented lines $\ell_{1}, \dots, \ell_{n}$ along with any hyperideal vertices form the boundary of a bounded or compact convex polygon $P'$ contained in $P$, as in Fig.~\ref{fig:greenblackpoly}.
\end{enumerate}
\end{Definition}
We will construct a bounded convex hyperbolic polygon whose sides and vertices are colored either green or black from a proper convex hyperideal polygon.
\begin{Definition}[Green-black polygon]
	Let $P$ be a bounded convex hyperbolic polygon in $\mathbb{H}^{2}$ with vertices $p_{1}, \dots, p_{n}$ listed in cyclic order respecting orientation. We denote this by $P= p_{1}\cdots p_{n}$. Color each edge and each vertex of $P$ either green or black. Then $P$ is called a \textit{green-black polygon} provided
	\begin{enumerate}
		\item each green edge is adjacent to two black edges;
		\item each vertex incident to a green edge is colored black, and the remaining vertices are colored green;
		\item adjacent green and black edges are orthogonal.
	\end{enumerate}
\end{Definition}
The reason for the color coding is that the version of the Cauchy Arm Lemma we will prove in Section~\ref{sec:greenblackpolygons} considers green-black polygons for which the black side lengths are fixed while the green side lengths vary, and the black angles are fixed at right angles while the green ones vary---black indicates fixed measurements, green variable measurements.

We construct a green-black polygon from a proper convex hyperideal polygon $P$ with no ideal vertices by the following procedure. First color all of the edges of $P$, some of which are rays or lines in $\mathbb{H}^{2}$, black. For each non-intersecting pair of consecutive support lines $\ell_{i}, \ell_{i+1}$, add the hyperideal vertex $s_{i, i+1}$ as a green colored segment along the unique shortest path between $\ell_{i}$ and $\ell_{i+1}$ as in Fig.~\ref{fig:greenblackpoly}. Each line $\ell_{i}$ now supports a line segment, either a finite edge of $P$, or a truncation of a line or ray on the boundary of $P$ to a line segment. The resulting polygon $P'$ is compact and convex and every green edge has as neighbors two black edges, both of which it meets at right angles. Also color the vertices---if a vertex is adjacent to two black edges, color it green, otherwise black.
\begin{figure}
\centering
\begin{subfigure}[t]{0.3\textwidth}
	\centering
	\begin{tikzpicture}[line cap=round,line join=round,>=triangle 45,x=0.5cm,y=0.5cm]
\clip(-3.5,-3.5) rectangle (3.5,3.5);
\draw [line width=1.7pt] (0.,-0.02) circle (1.64cm);
\draw [shift={(3.1876595412987103,2.4888869320995317)},line width=1.0pt] plot[domain=2.866682812109121:4.750106932851553,variable=\t]({1.*2.3868990320691204*cos(\t r)+0.*2.3868990320691204*sin(\t r)},{0.*2.3868990320691204*cos(\t r)+1.*2.3868990320691204*sin(\t r)});
\draw [shift={(-2.611735369123523,2.455413256516329)},line width=1.0pt] plot[domain=4.377652143067875:6.05145783060787,variable=\t]({1.*1.4800109556579497*cos(\t r)+0.*1.4800109556579497*sin(\t r)},{0.*1.4800109556579497*cos(\t r)+1.*1.4800109556579497*sin(\t r)});
\draw [shift={(-1.1447914011904725,3.26)},line width=1.0pt] plot[domain=-1.5939397725248288:0.,variable=\t]({1.*1.1447914011904727*cos(\t r)+0.*1.1447914011904727*sin(\t r)},{0.*1.1447914011904727*cos(\t r)+1.*1.1447914011904727*sin(\t r)});
\draw [shift={(2.5328000152792263,-2.9092597645064924)},line width=1.0pt] plot[domain=1.267562174466263:2.640078689604578,variable=\t]({1.*2.001124160114651*cos(\t r)+0.*2.001124160114651*sin(\t r)},{0.*2.001124160114651*cos(\t r)+1.*2.001124160114651*sin(\t r)});
\draw [shift={(-0.7874568535451341,-4.2497979049415955)},line width=1.0pt] plot[domain=0.9736786681253566:1.9183851272700465,variable=\t]({1.*2.784399147543959*cos(\t r)+0.*2.784399147543959*sin(\t r)},{0.*2.784399147543959*cos(\t r)+1.*2.784399147543959*sin(\t r)});
\draw [shift={(-3.1525200300893337,-1.7028016674997803)},line width=1.0pt] plot[domain=0.04999814551398941:1.6529638491565948,variable=\t]({1.*1.4183807642006712*cos(\t r)+0.*1.4183807642006712*sin(\t r)},{0.*1.4183807642006712*cos(\t r)+1.*1.4183807642006712*sin(\t r)});
\end{tikzpicture}
\caption{A convex hyperideal $6$-gon}
\label{fig:hyppoly}
\end{subfigure}
\;
\begin{subfigure}[t]{0.3\textwidth}
	\centering
	\definecolor{qqccqq}{rgb}{0.,0.69215686274509803,0.2}
\begin{tikzpicture}[line cap=round,line join=round,>=triangle 45,x=0.5cm,y=0.5cm]
\clip(-3.5,-3.5) rectangle (3.5,3.5);
\draw [line width=1.7pt] (0.,-0.02) circle (1.64cm);
\draw [shift={(-1.1447914011904745,3.26)},line width=1.0pt] plot[domain=-2.469986497336053:0.,variable=\t]({1.*1.1447914011904736*cos(\t r)+0.*1.1447914011904736*sin(\t r)},{0.*1.1447914011904736*cos(\t r)+1.*1.1447914011904736*sin(\t r)});
\draw [shift={(-2.6117353691235228,2.4554132565163282)},line width=1.0pt] plot[domain=-1.905533164111711:0.38831871817246555,variable=\t]({1.*1.480010955657949*cos(\t r)+0.*1.480010955657949*sin(\t r)},{0.*1.480010955657949*cos(\t r)+1.*1.480010955657949*sin(\t r)});
\draw [shift={(-3.1525200300893315,-1.702801667499778)},line width=1.0pt] plot[domain=-0.6723293713358327:1.6529638491565963,variable=\t]({1.*1.4183807642006683*cos(\t r)+0.*1.4183807642006683*sin(\t r)},{0.*1.4183807642006683*cos(\t r)+1.*1.4183807642006683*sin(\t r)});
\draw [shift={(-0.7874568535451343,-4.249797904941598)},line width=1.0pt] plot[domain=0.5197946225862194:2.2536745215993577,variable=\t]({1.*2.784399147543961*cos(\t r)+0.*2.784399147543961*sin(\t r)},{0.*2.784399147543961*cos(\t r)+1.*2.784399147543961*sin(\t r)});
\draw [shift={(2.5328000152792267,-2.909259764506493)},line width=1.0pt] plot[domain=1.2675621744662633:3.3135307514135377,variable=\t]({1.*2.0011241601146517*cos(\t r)+0.*2.0011241601146517*sin(\t r)},{0.*2.0011241601146517*cos(\t r)+1.*2.0011241601146517*sin(\t r)});
\draw [shift={(3.1876595412987103,2.4888869320995317)},line width=1.0pt] plot[domain=2.866682812109121:4.750106932851553,variable=\t]({1.*2.3868990320691204*cos(\t r)+0.*2.3868990320691204*sin(\t r)},{0.*2.3868990320691204*cos(\t r)+1.*2.3868990320691204*sin(\t r)});
\draw [shift={(-3.667217832595169,0.4569294489406403)},line width=1.0pt,dotted,color=qqccqq]  plot[domain=-0.6438439087145857:0.37090862791196577,variable=\t]({1.*1.7080832330336952*cos(\t r)+0.*1.7080832330336952*sin(\t r)},{0.*1.7080832330336952*cos(\t r)+1.*1.7080832330336952*sin(\t r)});
\draw [shift={(0.6224598461281102,3.477252036412751)},line width=1.0pt,dotted,color=qqccqq]  plot[domain=3.962242862889318:4.8637080421743715,variable=\t]({1.*1.3637551342652905*cos(\t r)+0.*1.3637551342652905*sin(\t r)},{0.*1.3637551342652905*cos(\t r)+1.*1.3637551342652905*sin(\t r)});
\draw [shift={(3.7312779859576266,-0.47264848672367893)},line width=1.0pt,dotted,color=qqccqq]  plot[domain=2.6675970598300136:3.42672191285405,variable=\t]({1.*1.8354634458428318*cos(\t r)+0.*1.8354634458428318*sin(\t r)},{0.*1.8354634458428318*cos(\t r)+1.*1.8354634458428318*sin(\t r)});
\end{tikzpicture}
	\caption{The green dotted segments denote the hyperideal vertices between consecutive non-intersecting lines.}
\label{fig:hyplines}
\end{subfigure}
\;
\begin{subfigure}[t]{0.3\textwidth}
	\centering
	\definecolor{qqccqq}{rgb}{0.,0.69215686274509803,0.2}
\begin{tikzpicture}[line cap=round,line join=round,>=triangle 45,x=0.5cm,y=0.5cm]
\clip(-3.5,-3.5) rectangle (3.5,3.5);
\draw [line width=1.7pt] (0.,-0.02) circle (1.64cm);
\draw [shift={(-3.667217832595169,0.4569294489406403)},line width=1.0pt,color=qqccqq]  plot[domain=-0.6438439087145857:0.37090862791196577,variable=\t]({1.*1.7080832330336952*cos(\t r)+0.*1.7080832330336952*sin(\t r)},{0.*1.7080832330336952*cos(\t r)+1.*1.7080832330336952*sin(\t r)});
\draw [shift={(0.622459846128109,3.4772520364127475)},line width=1.0pt,color=qqccqq]  plot[domain=3.9622428628893167:4.863708042174372,variable=\t]({1.*1.3637551342652872*cos(\t r)+0.*1.3637551342652872*sin(\t r)},{0.*1.3637551342652872*cos(\t r)+1.*1.3637551342652872*sin(\t r)});
\draw [shift={(3.7312779859576266,-0.47264848672367893)},line width=1.0pt,color=qqccqq]  plot[domain=2.6675970598300136:3.42672191285405,variable=\t]({1.*1.8354634458428318*cos(\t r)+0.*1.8354634458428318*sin(\t r)},{0.*1.8354634458428318*cos(\t r)+1.*1.8354634458428318*sin(\t r)});
\draw [shift={(-0.7874568535451341,-4.2497979049415955)},line width=1.0pt]  plot[domain=0.9736786681253566:1.9183851272700465,variable=\t]({1.*2.784399147543959*cos(\t r)+0.*2.784399147543959*sin(\t r)},{0.*2.784399147543959*cos(\t r)+1.*2.784399147543959*sin(\t r)});
\draw [shift={(-2.611735369123523,2.455413256516329)},line width=1.0pt]  plot[domain=5.083297608296656:6.05145783060787,variable=\t]({1.*1.4800109556579497*cos(\t r)+0.*1.4800109556579497*sin(\t r)},{0.*1.4800109556579497*cos(\t r)+1.*1.4800109556579497*sin(\t r)});
\draw [shift={(-1.144791401190471,3.26)},line width=1.0pt]  plot[domain=4.689245534654757:5.533039189684212,variable=\t]({1.*1.1447914011904687*cos(\t r)+0.*1.1447914011904687*sin(\t r)},{0.*1.1447914011904687*cos(\t r)+1.*1.1447914011904687*sin(\t r)});
\draw [shift={(3.1876595412987063,2.4888869320995304)},line width=1.0pt]  plot[domain=3.292911715379478:4.238393386624912,variable=\t]({1.*2.3868990320691164*cos(\t r)+0.*2.3868990320691164*sin(\t r)},{0.*2.3868990320691164*cos(\t r)+1.*2.3868990320691164*sin(\t r)});
\draw [shift={(2.5328000152792254,-2.9092597645064915)},line width=1.0pt]  plot[domain=1.8559255860591526:2.6400786896045783,variable=\t]({1.*2.0011241601146503*cos(\t r)+0.*2.0011241601146503*sin(\t r)},{0.*2.0011241601146503*cos(\t r)+1.*2.0011241601146503*sin(\t r)});
\draw [shift={(-3.1525200300893332,-1.7028016674997797)},line width=1.0pt]  plot[domain=0.049998145513988956:0.9269524180803116,variable=\t]({1.*1.4183807642006707*cos(\t r)+0.*1.4183807642006707*sin(\t r)},{0.*1.4183807642006707*cos(\t r)+1.*1.4183807642006707*sin(\t r)});
\end{tikzpicture}
\caption{The green-black polygon defined by the convex hyperideal polygon in fig.~\ref{fig:hyppoly}}
\label{fig:greenblackpoly}
\end{subfigure}
\caption{Construction of a green-black polygon from a proper convex hyperideal polygon.}
\label{fig:greenblackpolygonconstruction}
\end{figure}
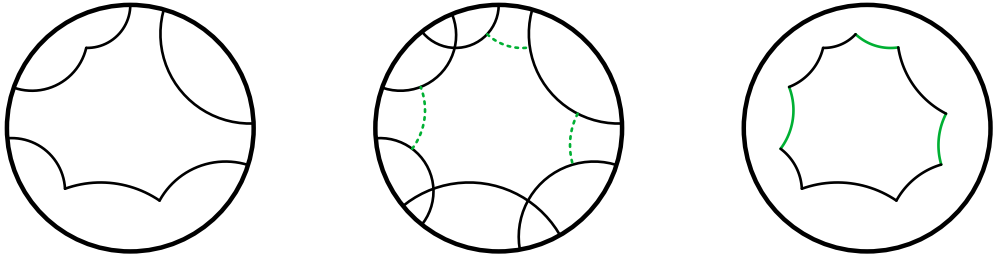

The analogue of boundedness for \textit{c}-polyhedra turns out to be a bit more subtle than one might at first suspect. The vertices of bounded polyhedra in $\mathbb{E}^{3}$ have compact links while the links of vertices at infinity of unbounded polyhedra may fail to be compact. Moreover, convex bounded polyhedra have convex links. We need a condition to force this behavior on convex \textit{c}-polyhedra.\footnote{One can in fact describe convex circle configurations that mimic unbounded polyhedra in $\mathbb{E}^{3}$ and fail to be globally rigid.} The subtlety arises precisely because we allow adjacent circles to be separated. When dealing with circle packings with ortho-circles where adjacent circles meet, either tangent or overlapping in an angle in the range $[0,\pi/2]$, the analogue of compact and convex links is satisfied automatically. When adjacent circles are allowed to separate, we need a condition that guarantees the behavior that the boundedness of convex Euclidean polyhedra forces upon neighborhoods of the vertices of the polyhedra. For this we introduce the next definition, which serves to force compact behavior around the vertices. 
\begin{Definition}[Proper \textit{c}-polyhedron, and \textit{c}-links]
Let $G(\mathcal{C})$ be a non-unitary convex \textit{c}-polyhedron based on the oriented abstract spherical polyhedron $P$ with vertex set $V=V(P)$. For any vertex $v\in V$, give the interior of the companion disk $D_{v}$ to $C_{v}$ a complete hyperbolic metric of constant curvature $-1$ making $D_{v}$ a model of the hyperbolic plane with its circle at infinity. Let $f_{1}, \dots, f_{n}$ be the faces adjacent to $v$ ordered cyclically about $v$ with respect to the orientation of $P$. Let $\ell_{i}$ be the hyperbolic line in $D_{v}$ determined by the orthogonal intersection $D_{v} \cap O_{f_{i}}^{+}$, but oriented oppositely to that of the ortho-circle $O_{f_{i}}^{+}$. Then $G(\mathcal{C})$ is \textit{proper at} $v$ if the oriented lines $\ell_{1}\dots, \ell_{n}$ are the support lines of a proper convex hyperideal polygon $P(v)$ in $D_{v}$. The \textit{c}-polyhedron $G(\mathcal{C})$ then is \textit{proper} provided it is proper at each of its vertices. The green-black polygon $L(v)$ determined by the proper hyperideal polygon $P(v)$ is the \textit{c-link} of the vertex circle $C_{v}$.
\end{Definition}

Consistent orientation of our \textit{c}-polyhedron $G(\mathcal{C})$ with its oriented abstract spherical polyhedron $P$ implies that lines $\ell_{i}$ in the definition of properness are ordered cyclically about $P(v)$, meaning that the cyclic ordering that $P(v)$ gives to the support lines is exactly that given to the faces adjacent to $v$ by the orientation of $P$. The relationship between convexity and properness is a bit subtle. We suspect that if the \textit{c}-polyhedron $G(\mathcal{C})$ is edge segregated so that it avoids deep overlaps, then convexity of $G(\mathcal{C})$ implies its properness. We have yet to verify our suspicions.


\subsection{A congruence condition for \textit{c}-polyhedra}
Two green-black polygons are {\em compatible} when they have the same number of green and black edges given in the same order.  When also every pair of corresponding black edges have the same hyperbolic length, the polygons are {\em black-edge-congruent}. Note here that the complex interior angles---the real angles at green vertices and the imaginary unit times the lengths of the green edges---may differ between two black-edge-congruent polygons. We use the plain term {\em congruent} to mean the corresponding complex angles also agree. 

We end this rather lengthy preliminary section with an important observation. If the non-unitary \textit{c}-polyhedron $G(\mathcal{C})$ is both convex and proper, then the \textit{c}-links are M\"obius invariants. This means that if $T\in \text{M\"ob}(\mathbb{S}^{2})$ and $v$ is a vertex of $G$,   then the \textit{c}-link of $C_{v}$ in $G(\mathcal{C})$ and that of $T(C_{v})$ in $T(G(\mathcal{C}))$ are congruent. This is obvious from the fact that the M\"obius transformation $T$ restricts to an isometry of $D^{\circ}_{v}$ with $T(D^{\circ}_{v})$ when both open disks are given the complete hyperbolic metric as in the definition of \textit{c}-link. Here $D^{\circ}$ means the open interior of the closed disk $D$. This goes the other direction as well and allows us to replace condition (ii) in Theorem~\ref{Thm:Congruence ConditionI} by an equivalent one on \textit{c}-links, as in Corollary~\ref{CongruenceConditionII} below.  
\begin{Lemma}\label{Lem:black-edgeCongruent}
	Let $G(\mathcal{C})$ and $G(\mathcal{C}')$ be two proper, convex, non-unitary \textit{c}-polyhedra both based on the abstract spherical polyhedron $P$. If $G(\mathcal{C})$ and $G(\mathcal{C}')$ have M\"obius-congruent \textit{c}-faces, then corresponding \textit{c}-links are black-edge-congruent.
\end{Lemma}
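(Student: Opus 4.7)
The plan is to show that each black edge of the \textit{c}-link $L(v)$ lies along a hyperbolic line $\ell_i = D_v \cap O_{f_i}^+$ and is determined, both in position and length, by the M\"obius-equivalence class of the single \textit{c}-face $\mathcal{C}_{f_i}$. Once that face-locality is established, a M\"obius congruence $T$ with $T(\mathcal{C}_{f_i}) = \mathcal{C}'_{f_i}$ restricts via $C_v \mapsto C'_v$ to a hyperbolic isometry $D_v \to D'_v$ taking the $i$-th black edge of $L(v)$ onto that of $L'(v)$, so corresponding black edges have equal hyperbolic length. Compatibility is then automatic: the cyclic order of edges around $L(v)$ and $L'(v)$ matches because both \textit{c}-polyhedra are consistently oriented with the common base polyhedron $P$, and the pattern of green versus black edges matches because the presence of a green hyperideal edge between consecutive support lines $\ell_{i-1}, \ell_i$ is governed entirely by whether $\langle C_v, C_u\rangle$ lies in $(-1,1)$ or in $(1,\infty)$, where $u$ is the neighbor of $v$ shared by $f_{i-1}$ and $f_i$---an invariant preserved by the M\"obius congruence of the face $\mathcal{C}_{f_i}$ that contains both $C_v$ and $C_u$.

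For the main claim, fix $v$ and an adjacent face $f_i$, let $u_1, u_n$ be the neighbors of $v$ along $f_i$, and let $f_{i-1}$ be the face sharing the edge $vu_1$ with $f_i$. Both $O_{f_i}$ and $O_{f_{i-1}}$ then lie in the coaxial family orthogonal to $(C_v, C_{u_1})$. Pass to the Poincar\'e ball model of $\mathbb{H}^3$, identifying $D_v$ with the hyperbolic plane $h_v$ bounded by $C_v$, and let $h_{f_j}$ denote the plane bounded by $O_{f_j}$, so that $\ell_j = h_v \cap h_{f_j}$. If $C_v$ and $C_{u_1}$ are separated, then $h_v$ and $h_{u_1}$ are ultra-parallel; the orthogonal coaxial family to $(C_v, C_{u_1})$ is elliptic through two common points, and the geodesic in $\mathbb{H}^3$ through these points is exactly the common perpendicular $\gamma$ of $h_v$ and $h_{u_1}$. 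Consequently both $h_{f_i}$ and $h_{f_{i-1}}$ contain $\gamma$, so $\ell_i$ and $\ell_{i-1}$ meet in $h_v$ at the foot $p = \gamma \cap h_v$. If instead $C_v$ and $C_{u_1}$ overlap non-unitarily, then $h_v \cap h_{u_1}$ is a hyperbolic line $m$, and any plane orthogonal to both $h_v$ and $h_{u_1}$ meets $m$ perpendicularly at a single point; hence $\ell_i$ and $\ell_{i-1}$ are perpendicular to $m$ at distinct points and are ultra-parallel in $h_v$ with common perpendicular lying on $m$. In either case the $u_1$-endpoint of the black edge on $\ell_i$ is the single point $\ell_i \cap \gamma$ (respectively $\ell_i \cap m$), which depends only on $\{C_v, C_{u_1}, O_{f_i}\} \subset \mathcal{C}_{f_i}$. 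By the symmetric argument at $u_n$, the other endpoint depends only on $\{C_v, C_{u_n}, O_{f_i}\} \subset \mathcal{C}_{f_i}$, so the entire black edge on $\ell_i$ is a M\"obius invariant of $\mathcal{C}_{f_i}$ alone.

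To finish, the M\"obius transformation $T$ with $T(\mathcal{C}_{f_i}) = \mathcal{C}'_{f_i}$ respecting correspondences sends $O_{f_i}^+$ to ${O'_{f_i}}^+$ by Proposition~\ref{prop:faceorder}; its restriction to $D_v$ is a hyperbolic isometry onto $D'_v$ and therefore carries the $i$-th black edge of $L(v)$ onto the $i$-th black edge of $L'(v)$ with equal hyperbolic length. Running this over every $i$ and every $v$ delivers the asserted black-edge-congruence. I expect the main obstacle to be precisely the coaxial-family analysis producing the common tether $\gamma$ or $m$ for the two adjacent ortho-planes $h_{f_i}, h_{f_{i-1}}$ depending only on the pair $(C_v, C_{u_1})$; without this face-locality one would have to control three consecutive faces $f_{i-1}, f_i, f_{i+1}$ at once, and the hypothesis---only per-face M\"obius congruence, not a single global M\"obius map---would not be strong enough.
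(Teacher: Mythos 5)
Your proof is correct and follows essentially the same route as the paper's: you show each endpoint of a black edge is pinned down by $C_{v}$, one neighbor $C_{u}$, and $O_{f}$ alone, because both adjacent ortho-circles lie in the coaxial family orthogonal to the pair $(C_{v},C_{u})$, with the same case split (separated circles giving a common finite vertex, overlapping circles giving ultra-parallel lines with common perpendicular along $h_{v}\cap h_{u}$). The only difference is presentational---you phrase the argument in the Poincar\'e ball model of $\mathbb{H}^{3}$ via the tether $\gamma$ or $m$, where the paper works directly on $\mathbb{S}^{2}$ with the foci of the coaxial family $\mathcal{A}_{u}$ and the line $\lambda$, and these are the same objects under the standard dictionary.
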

\begin{proof} 
\begin{figure}
\definecolor{qqccqq}{rgb}{0.,0.8,0.}
\definecolor{uuuuuu}{rgb}{0.26666666666666666,0.26666666666666666,0.26666666666666666}
\definecolor{ccqqqq}{rgb}{0.8,0.,0.}
\begin{tikzpicture}[line cap=round,line join=round,>=triangle 45,x=1.5cm,y=1.5cm]
\clip(-0.5,-1.75) rectangle (5.25,2.25);
\draw (0.,0.)-- (2.9748704687854977,0.);
\draw [dash pattern=on 1pt off 1pt,color=ccqqqq] (1.5571486301158692,0.) circle (1.4655733330540457cm);
\draw(0.8775867882917218,-0.8296144826179065) circle (0.6631288716114426cm);
\draw(2.702341935537465,-0.5822398615103799) circle (1.251261152776472cm);
\draw (0.,0.)-- (4.499187934422767,0.);
\draw [shift={(2.702341935537466,0.)},line width=1.2pt,color=qqccqq]  plot[domain=0.:3.141592653589793,variable=\t]({1.*0.5973635207034871*cos(\t r)+0.*0.5973635207034871*sin(\t r)},{0.*0.5973635207034871*cos(\t r)+1.*0.5973635207034871*sin(\t r)});
\draw [shift={(3.673093107571656,0.)},dash pattern=on 1pt off 1pt]  plot[domain=0.9677063691655743:2.4787473407614624,variable=\t]({1.*0.7651892982383443*cos(\t r)+0.*0.7651892982383443*sin(\t r)},{0.*0.7651892982383443*cos(\t r)+1.*0.7651892982383443*sin(\t r)});
\draw [shift={(4.29715532147481,0.)},dash pattern=on 1pt off 1pt]  plot[domain=1.5057434244221606:2.7576635596104087,variable=\t]({1.*1.4787111144837157*cos(\t r)+0.*1.4787111144837157*sin(\t r)},{0.*1.4787111144837157*cos(\t r)+1.*1.4787111144837157*sin(\t r)});
\draw [shift={(-0.22299834024373993,0.)},dash pattern=on 1pt off 1pt]  plot[domain=0.567788602614441:1.1864559066145806,variable=\t]({1.*1.3054148226541504*cos(\t r)+0.*1.3054148226541504*sin(\t r)},{0.*1.3054148226541504*cos(\t r)+1.*1.3054148226541504*sin(\t r)});
\draw [shift={(-1.2676354845266227,0.)},dash pattern=on 1pt off 1pt]  plot[domain=0.316260367092939:0.9131536181482248,variable=\t]({1.*2.2571661068356597*cos(\t r)+0.*2.2571661068356597*sin(\t r)},{0.*2.2571661068356597*cos(\t r)+1.*2.2571661068356597*sin(\t r)});
\draw (-0.31573653570558763,2.1983751472769346) node[anchor=north west] {$O_g$};
\draw (-0.32746205467202816,1.553471604122706) node[anchor=north west] {$O'_g$};
\draw (0.32916700744864186,0.9202935799349178) node[anchor=north west] {$p_u$};
\draw [color=ccqqqq](1.5017189040926955,1.5182950472233843) node[anchor=north west] {$O$};
\draw (2.322505231743533,1.1196274023644066) node[anchor=north west] {$p_w$};
\draw (2.0528182955154004,0.005703100552557145) node[anchor=north west] {$a$};
\draw (3.1198405214614895,0.029154138485438186) node[anchor=north west] {$b$};
\draw (3.330899862857419,-1.0613191253935301) node[anchor=north west] {$C_w$};
\draw (0.17673526088491487,-1.0730446443599708) node[anchor=north west] {$C_u$};
\draw (4.608981430199438,0.22848796091492704) node[anchor=north west] {$C_v$};
\draw (4.151686190508257,0.8382149471698341) node[anchor=north west] {$O'_g$};
\draw (4.468275202602151,1.7410799075857541) node[anchor=north west] {$O_g$};
\draw [shift={(1.5571486301158692,0.)},line width=1.2pt,color=ccqqqq]  plot[domain=0.5487566059437886:2.339946533230891,variable=\t]({1.*0.9770488887026971*cos(\t r)+0.*0.9770488887026971*sin(\t r)},{0.*0.9770488887026971*cos(\t r)+1.*0.9770488887026971*sin(\t r)});
\begin{scriptsize}
\draw [fill=uuuuuu] (0.8775867882917218,0.7020115626196962) circle (1.5pt);
\draw [fill=uuuuuu] (2.104978414833979,0.) circle (1.5pt);
\draw [fill=uuuuuu] (3.299705456240953,0.) circle (1.5pt);
\draw [fill=uuuuuu] (2.390741113058071,0.5096548864647824) circle (1.5pt);
\draw [fill=uuuuuu] (0.8775867882917218,-0.7020115626196963) circle (1.5pt);
\end{scriptsize}
\end{tikzpicture}
\caption{The construction of the points $p_{u}$ and $p_{w}$.}
\label{fig:Puandpw}
\end{figure}
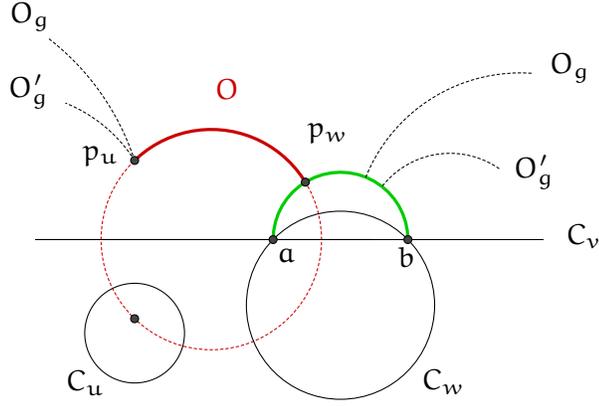
We show that the corresponding black edges of the links $L(v)$ in $G(\mathcal{C})$ and $L(v)'$ in $G(\mathcal{C}')$ have the same length. Let $u$ and $w$ be vertices of adjacent edges $uv$ and $vw$ of $P$ that lie on the face $f$. First by applying an appropriate M\"obius transformation, assume that the \textit{c}-polyhedra $G(\mathcal{C})$ and $G(\mathcal{C}')$ have been placed in a \textit{normalized} position where $C_{u} = C_{u}'$, $C_{v} = C_{v}'$, and $C_{w} = C_{w}'$. Let $O$ be the unique circle orthogonal to $C_{u}$, $C_{v}$, and $C_{w}$ and note that the normalization implies that $O_{f} = O = O_{f}'$. Our claim is that the black edges of $L(v)$ and $L(v)'$ that lie along $O$ are completely determined by the circles $C_{u} = C_{u}'$, $C_{v} = C_{v}'$, and $C_{w} = C_{w}'$. To see this, we define a point $p_{u}$ in the open disk $D^{\circ} = D_{v}^{\circ} = D_{v}^{\prime \circ}$ as follows. The circles $C_{u}$ and $C_{v}$ determine a unique coaxial family $\mathcal{A}_{u}$ and the ortho-circle $O$ is a member of the complementary family $\mathcal{A}_{u}^{\perp}$. There are two cases to consider depending on whether or not $C_{u}$ and $C_{v}$ meet. When $C_{u}$ is disjoint from $C_{v}$, the family $\mathcal{A}_{u}$ is a hyperbolic coaxial family with two foci, one of which is in $D^{\circ}$. This one we call $p_{u}$. On the other hand, when $C_{u}$ and $C_{v}$ meet at two points $a$ and $b$ on $C_{v}$, the coaxial family $\mathcal{A}_{u}$ is elliptic. In this case, let $\lambda$ be the hyperbolic line in $D^{\circ}$ with ideal vertices $a$ and $b$. Note that the circle containing $\lambda$ is the unique member of $\mathcal{A}_{u}$ that is orthogonal to $C_{v}$. Let $p_{u}$ be the point of intersection of $O$ with $\lambda$ in the disk $D^{\circ}$. The observation we make is that in both \textit{c}-links $L(v)$ and $L(v)'$, the black edge determined by $O_{f} = O_{f}'$ is the edge $e= p_{u}p_{w}$, the hyperbolic segment connecting $p_{u}$ and $p_{w}$. This follows from the observation that both ortho-circles $O_{g}$ and $O_{g}'$ for the respective \textit{c}-polyhedra $G(\mathcal{C})$ and $G(\mathcal{C}')$ that are determined by the face $g$ adjacent to $f$ along the edge $uv$ are members of the complementary coaxial family $\mathcal{A}_{u}^{\perp}$. In particular, in the case that $\mathcal{A}$ is hyperbolic, both pass through the same point $p_{u}$, and in the case that $\mathcal{A}$ is elliptic, the unique green edge from $O$ to both $O_{g}$ and $O_{g}'$ lies along the hyperbolic line $\lambda$ and so meets $O$ at $p_{u}$. See Fig.~\ref{fig:Puandpw} for these cases.
\end{proof}

\begin{Remark}\label{rem:complexangleL(v)}
Note that the complex angle at a green vertex or a green edge of a \textit{c}-link $L(v)$ is the complex dihedral angle between the two adjacent \textit{c}-faces $\mathcal{C}_{f}$ and $\mathcal{C}_{g}$ incident to $C_{v}$.
\end{Remark}
This observation along with the preceding lemma gives the following corollary.

\begin{Corollary}\label{CongruenceConditionII}
	Let $G(\mathcal{C})$ and $G(\mathcal{C}')$ be two proper, convex, non-unitary \textit{c}-polyhedra both based on the abstract spherical polyhedron $P$. Then $G(\mathcal{C})$ and $G(\mathcal{C}')$ are M\"obius-congruent with one another if and only if the following conditions hold.
	\begin{enumerate}
		\item[(i)] $G(\mathcal{C})$ and $G(\mathcal{C}')$ have M\"obius-congruent \textit{c}-faces. 
		\item[(ii)] The complex angles at the corresponding green (finite and hyperideal) vertices of corresponding \textit{c}-links agree; equivalently, by the preceding lemma, corresponding \textit{c}-links are congruent.
	\end{enumerate}
\end{Corollary}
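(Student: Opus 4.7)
The plan is to reduce this corollary to Theorem~\ref{Thm:Congruence ConditionI} by translating condition (ii) stated in terms of \textit{c}-link data into the dihedral-angle condition (ii) of that theorem, exploiting Remark~\ref{rem:complexangleL(v)} and Lemma~\ref{Lem:black-edgeCongruent}. Both directions are essentially assemblies of already-proved ingredients, but the bookkeeping around ``which green feature of $L(v)$ records which oriented adjacency'' needs to be laid out carefully.

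For the forward direction I would begin with a M\"obius transformation $T$ with $T(\mathcal{C})=\mathcal{C}'$. Since $T$ preserves the indexing, the orientations on each $C_{v}$, and the ortho-circles, it carries $\mathcal{C}_{f}$ to $\mathcal{C}'_{f}$ for every face $f$, yielding (i). For (ii), I would invoke the observation just before Lemma~\ref{Lem:black-edgeCongruent}: $T$ restricts to a hyperbolic isometry $D_{v}^{\circ}\to D_{v'}^{\circ}$ that sends the oppositely-oriented ortho-lines $\ell_{i}$ defining $P(v)$ to those defining $P(v')$, and therefore carries the green-black polygon $L(v)$ isometrically onto $L(v)'$. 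Congruence of corresponding \textit{c}-links is then immediate, and in particular the complex angles at their green (finite and hyperideal) vertices coincide.

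For the reverse direction I would assume (i) and (ii) and verify the hypotheses of Theorem~\ref{Thm:Congruence ConditionI}. Hypothesis (i) of the theorem is given. For hypothesis (ii) of the theorem, fix an adjacent pair of faces $f,g\in F(P)$ sharing an edge $uv$. By Remark~\ref{rem:complexangleL(v)} the complex angle recorded at the green vertex (if $\ell_{f}\cap\ell_{g}\cap D_{v}^{\circ}\neq\varnothing$) or green edge (if the two support lines are ultra-parallel in $D_{v}$) of $L(v)$ attached to the $f,g$-corner is exactly $\cos^{-1}\langle O_{f}^{+},O_{g}^{+}\rangle$, and the analogous statement holds for $L(v)'$. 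Hypothesis (ii) of the corollary then gives
\begin{equation*}
\cos^{-1}\langle O_{f}^{+},O_{g}^{+}\rangle=\cos^{-1}\langle {O'_{f}}^{+},{O'_{g}}^{+}\rangle,
\end{equation*}
and applying $\cos$ produces the required equality of inversive distances. Theorem~\ref{Thm:Congruence ConditionI} now delivers M\"obius-congruence.

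To justify the ``equivalently'' clause in (ii), I would note that two compatible green-black polygons are congruent precisely when they agree on all black edge lengths and on all complex angles at green vertices and green edges. Under hypothesis (i) the black edges already match, by Lemma~\ref{Lem:black-edgeCongruent}, so agreement of the complex angles at the green features is equivalent to congruence of the entire \textit{c}-link. I do not expect any real obstacle here; the only step that requires a second's care is the correspondence between (a) an oriented adjacency of faces along $uv$, (b) the ordered pair of oriented support lines in $D_{v}$ that bound the relevant corner of the hyperideal polygon $P(v)$, and (c) the associated green vertex or green edge of $L(v)$. That correspondence is forced by the consistent-orientation convention adopted after Proposition~\ref{prop:faceorder}, and once it is set up, everything else is a direct substitution.
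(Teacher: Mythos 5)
Your proposal is correct and follows exactly the route the paper intends: the paper derives this corollary in one line from Remark~\ref{rem:complexangleL(v)} (identifying the complex angle at a green feature of $L(v)$ with the complex dihedral angle $\cos^{-1}\langle O_{f}^{+},O_{g}^{+}\rangle$) together with Lemma~\ref{Lem:black-edgeCongruent} and the M\"obius invariance of \textit{c}-links, all feeding into Theorem~\ref{Thm:Congruence ConditionI}. Your write-up simply makes that reduction explicit, so there is nothing to correct.
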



\section{Cauchy's Rigidity Theorem}\label{Section:CRT}

In this section we review Cauchy's celebrated rigidity theorem on the uniqueness of convex, bounded  polyhedra in $\mathbb{E}^{3}$. This serves as the model for our proof of the Main Theorem. In fact, the bulk of the remainder of this paper is dedicated to an appropriate generalization of Cauchy's Arm Lemma from the context of Euclidean and spherical geometry to that of M\"obius geometry.
\subsection{Cauchy's argument recalled}
Suppose we have two convex polyhedra in Euclidean $3$-space $\mathbb{E}^3$ with the same combinatorics and with corresponding faces congruent. Cauchy's theorem states that the two polyhedra must be congruent, meaning that there is a Euclidean isometry mapping one to the other.

Cauchy's proof is split into two components---the one geometric and the other combinatorial. The geometric component is the Discrete Four Vertex Lemma (Lemma~\ref{lem:cauchyFourVertex}), which follows from an application of Cauchy's Arm Lemma (Lemma~\ref{lem:cauchyArm}), presented after a bit of notation. We denote a planar or spherical polygon $P$ merely by listing its vertices in cyclic order, say as $P = p_{1} \dots p_{n}$. The Euclidean or spherical length of the side $p_{i}p_{i+1}$ is denoted as $|p_{i}p_{i+1}|$ and the interior angle at $p_{i}$ is denoted as $\angle p_{i}$, though on occasion, for either emphasis or to avoid confusion, we will use $\angle p_{i-1}p_{i}p_{i+1}$.

\begin{Lemma}[Cauchy's Arm Lemma]\label{lem:cauchyArm}
	Let $P = p_1 \dots p_n$ and $P' = p'_1 \dots p'_n$ be two convex (planar or spherical) polygons such that, for $1\leq i < n$, $|p_i p_{i+1}| = |p_i' p_{i+1}'|$, and for $1\leq i < n-1$, $\angle p_{i+1}  \leq \angle  p'_{i+1} $. Then $|p_n p_1| \leq |p_n' p_1'|$ with equality if and only if $\angle  p_{i+1}  = \angle  p'_{i+1} $ for all $1\leq i < n - 1$. 
\end{Lemma}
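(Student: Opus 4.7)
The plan is to induct on $n$, the number of vertices. The base case $n = 3$ is immediate from the planar or spherical law of cosines, which makes $|p_1 p_3|$ a strictly monotone function of $\angle p_2 \in (0,\pi)$ when the two adjacent side lengths are held fixed, delivering both the inequality and the equality criterion at once.

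For the inductive step I would split into two subcases. First, suppose some middle angle is already equal, $\angle p_{i+1} = \angle p_{i+1}'$ for some $1 \leq i \leq n-2$. Then the triangles $\triangle p_i p_{i+1} p_{i+2}$ and $\triangle p_i' p_{i+1}' p_{i+2}'$ are congruent by SAS. Cutting each off by inserting the diagonal $p_i p_{i+2}$ (respectively $p_i' p_{i+2}'$) produces smaller convex polygons $Q, Q'$ on $n-1$ vertices whose matching side lengths persist (the new diagonals have equal length by the triangle congruence) and whose interior angles at $p_i$ and $p_{i+2}$ decrease by equal amounts, so the hypotheses of the lemma transfer to $Q, Q'$ and the induction hypothesis closes this subcase.

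If every middle-angle inequality is strict, I would reduce to the previous subcase by deforming $P'$. Keep $p_1', \ldots, p_{n-1}'$ fixed and rotate $p_n'$ about $p_{n-1}'$ to close the interior angle at $p_{n-1}'$ continuously; write $P'(t)$ for the resulting polygon with $\angle p_{n-1}'$ decreased by $t$. In the auxiliary triangle $\triangle p_1' p_{n-1}' p_n'(t)$ the two sides incident to $p_{n-1}'$ are rigid, and the apex angle $\angle p_1' p_{n-1}' p_n'(t)$ differs from the polygon interior angle at $p_{n-1}'$ by the fixed quantity $\angle p_{n-2}' p_{n-1}' p_1'$; the base case therefore shows that $|p_n'(t) p_1'|$ decreases strictly with $t$. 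I would then run the deformation until whichever of the following happens first: either (i) $\angle p_{n-1}'(t) = \angle p_{n-1}$, in which case the first subcase applies to $P$ and $P'(t)$, or (ii) some triple of consecutive vertices becomes collinear, in which case $P'(t)$ is effectively an $(n-1)$-gon to which the induction hypothesis applies directly. In either eventuality $|p_n p_1| \leq |p_n'(t) p_1'| < |p_n' p_1'|$, and the equality case follows from the strict monotonicity: equality forces every middle angle to match, and iterating the first subcase exhibits $P$ and $P'$ as congruent.

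The main obstacle is the technical bookkeeping in the second subcase. One must verify that during the rotation of $p_n'$ the polygon stays simple and convex until one of the stopping events (i) or (ii) occurs, and that the auxiliary-triangle monotonicity remains valid on the sphere where the polygon must be constrained to an open hemisphere for \emph{convex} to retain its intended meaning. Both issues are routine but demand the sort of case analysis that the clean inductive skeleton above would otherwise hide.
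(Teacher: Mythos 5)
The paper does not actually prove this lemma; it cites the inductive proof of I.\,Schoenberg reported in \cite{FuchsTab:2007}, whose skeleton (induct on $n$; law of cosines for $n=3$; cut off a congruent triangle when a middle angle agrees; otherwise deform one polygon until either an angle matches or a degeneracy occurs) you have reproduced. Your base case and your first subcase are fine. The gap is in the second subcase, and it is exactly the convexity trap the paper warns about in its ``How not to prove the Green-Black Arm Lemma'' discussion: you deform the wrong polygon. You close the angle at $p_{n-1}'$ and stop when ``some triple of consecutive vertices becomes collinear,'' asserting that the induction hypothesis then ``applies directly.'' But the triple that degenerates is $p_{n-1}',\,p_n'(t),\,p_1'$ (as $p_n'(t)$ swings toward the interior the angle at $p_n'(t)$ opens to $\pi$, while the angle at $p_1'$ only decreases), and at that moment $|p_1'p_n'(t)| = |p_1'p_{n-1}'| - |p_{n-1}'p_n'|$. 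The inequality you still owe, $|p_1p_n| \leq |p_1'p_n'(t)|$, is therefore equivalent to $|p_1p_n| + |p_{n-1}p_n| \leq |p_1'p_{n-1}'|$. The induction hypothesis applied to the $(n-1)$-gons $p_1\cdots p_{n-1}$ and $p_1'\cdots p_{n-1}'$ yields only $|p_1p_{n-1}| \leq |p_1'p_{n-1}'|$, and the triangle inequality gives $|p_1p_{n-1}| \leq |p_1p_n| + |p_{n-1}p_n|$ --- the wrong direction. Nothing you have established bounds the larger quantity $|p_1p_n| + |p_{n-1}p_n|$ by $|p_1'p_{n-1}'|$, so the argument does not close; the needed estimate is true but is essentially an instance of the lemma itself, making the step circular.

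The repair is the classical one, and it is precisely what the paper carries out in Case (II) of its Green-Black Arm Lemma: deform $P$, not $P'$. Open the angle at $p_{n-1}$ by rotating $p_n$ outward; $|p_1p_n|$ increases by the base case applied to the triangle $p_1p_{n-1}p_n$. Either you reach $\angle p_{n-1}'$ while convexity persists, and your first subcase finishes, or convexity first fails when $p_2,\,p_1,\,p_n^{*}$ become collinear with $p_1$ in the middle. Then $|p_1p_n^{*}| = |p_2p_n^{*}| - |p_1p_2|$ exactly, the induction hypothesis on the arms $p_2\cdots p_n^{*}$ and $p_2'\cdots p_n'$ gives $|p_2p_n^{*}| \leq |p_2'p_n'|$, and the triangle inequality applied to the \emph{undeformed} $P'$ gives $|p_2'p_n'| \leq |p_1'p_2'| + |p_1'p_n'|$; chaining these yields $|p_1p_n| \leq |p_1p_n^{*}| \leq |p_1'p_n'|$. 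The asymmetry is the whole point: the degenerate configuration must arise on the side where you obtain an exact length identity, and the triangle inequality must be invoked on the side you have not touched.
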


Cauchy's original proof of the lemma had a gap that subsequently was filled. A straightforward inductive proof, such as the one in \cite{FuchsTab:2007}, relies on the law of cosines and the triangle inequality. Imagine now that we have convex planar or spherical polygons $P$ and $P'$ with the same number of sides with corresponding sides of equal length. Label each vertex of $P$ with a plus sign $+$ or a minus sign $-$ by comparing its angle with the corresponding angle in $P'$: if the angle at $p_{i}$ is larger than that at $p_{i}'$, label it with a $+$, if smaller, a $-$, and if equal, no label at all. Using the Cauchy Arm Lemma, the proof of the following lemma is straightforward. 

\begin{Lemma}[Discrete Four Vertex Lemma]\label{lem:cauchyFourVertex}
	Let $P$ and $P'$ be as in the preceding paragraph and label the vertices of $P$ as described. Then either $P$ and $P'$ are congruent (and thus no vertex is labeled with $+$ or $-$), or a walk around $P$ encounters at least four sign changes (from $-$ to $+$ or from $+$ to $-$).
\end{Lemma}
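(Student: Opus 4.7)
The plan is to observe that, around a cyclic sequence of $+$, $-$, and blank vertex labels, the number of sign changes (from $+$ to $-$ or from $-$ to $+$, ignoring intervening blanks) is necessarily even. Thus ``fewer than four sign changes'' reduces to two cases: zero sign changes or exactly two sign changes, and I would dispatch each with Cauchy's Arm Lemma.

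In the zero-change case, every non-blank label has the same sign. If no vertex is labeled, corresponding angles and sides all agree and $P$ and $P'$ are congruent. Otherwise, assume without loss of generality that every labeled vertex is $+$, so $\angle p_i \geq \angle p_i'$ for all $i$. Cut $P$ and $P'$ open along some edge $p_n p_1$ and apply Cauchy's Arm Lemma with the roles of $P$ and $P'$ swapped: the intermediate-angle inequality yields $|p_n'p_1'| \leq |p_np_1|$. Since corresponding side lengths are equal by hypothesis, the equality clause of the Arm Lemma forces $\angle p_i = \angle p_i'$ for $2\leq i\leq n-1$. Repeating the same argument after cyclically rotating the choice of cut edge captures every remaining vertex, so all interior angles coincide, contradicting the existence of any $+$-labeled vertex.

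In the two-change case, the vertex sequence partitions into two cyclic arcs, one containing every $+$-labeled vertex (with blanks interspersed) and the other every $-$-labeled vertex. The plan is to draw a chord of $P$ whose endpoints separate the two arcs, apply Cauchy's Arm Lemma to each of the two resulting sub-polygons, and combine the two opposing chord-length inequalities. On the $+$-sub-polygon the intermediate angles of $P$ dominate those of $P'$, so the chord in $P$ has length at least the chord in $P'$; on the $-$-sub-polygon the reverse inequality holds. Equating these forces equality of the chord lengths, and the equality clause of the Arm Lemma applied to each sub-polygon then forces every intermediate angle to match its counterpart in $P'$, contradicting the presence of any labeled vertex in either arc.

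The main obstacle—indeed the only delicate point—is the placement of the chord endpoints so that every labeled vertex ends up strictly in the interior of one of the two sub-arcs. When a sign change is mediated by at least one blank-labeled vertex, I simply take such a blank vertex as the endpoint at that boundary. When instead a sign change occurs across a single edge joining a $+$-vertex directly to a $-$-vertex, I first insert the midpoint of that edge as an auxiliary straight-angle vertex in \emph{both} $P$ and $P'$; this is legitimate because corresponding edge lengths are equal, so the inserted midpoints agree automatically in angle ($\pi$) and in the two resulting half-edge lengths, and the Arm Lemma is insensitive to such degenerate subdivisions. With this bookkeeping in place, both sub-polygons have every labeled vertex strictly interior, and the argument closes.
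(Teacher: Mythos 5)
Your proposal is correct and follows essentially the same route as the paper's proof: the parity observation reduces to the zero-change and two-change cases, each dispatched by Cauchy's Arm Lemma, with the two-change case handled by a separating chord and two opposing length inequalities. Your minor variations---invoking the equality clause with a rotated cut edge in the zero-change case, and using a blank vertex as a chord endpoint when one is available rather than always subdividing at edge midpoints as the paper does---are bookkeeping differences only.
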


\begin{proof}
	First note that because a polygon is a cycle the number of sign changes must be even. Suppose there are no sign changes, i.e., either no vertex is labeled or all of the labeled vertices have the same label. If no vertex is labeled, then the two polygons are congruent. Assume then that some of the vertices are labeled, but all with the same label. Then Cauchy's Arm Lemma implies that there exists a pair of corresponding edges in $P$ and $P'$ with different lengths, a contradiction. 
	
	Assume now that there are exactly two sign changes of the labels of $P$.  Select two edges $p_i p_{i+1}$ and $p_j p_{j+1}$ (oriented counter-clockwise) of $P$ such that all of the $+$ signs are along the subchain from $p_{i+1}$ to $p_j$ and all of the $-$ signs are along the subchain from $p_{j+1}$ back to $p_i$. Subdivide both edges in two by adding a vertex at the respective midpoints $X$ and $Y$ of $p_i p_{i+1}$ and $p_j p_{j+1}$. Similarly, subdivide the corresponding edges $p'_i p'_{i+1}$ and $p'_j p'_{j+1}$ in $P'$ at midpoints $X'$ and $Y'$. Denote the subchain of $P$ from $X$ to $Y$ by $P_{+}$ and the subchain from $Y$ back to $X$ by $P_{-}$. Similarly for $P'_{+}$ and $P'_{-}$ in $P'$. Applying the arm lemma to $P_+$ and $P_+'$ implies that $|X Y| > |X' Y'|$, and, similarly, an application to $P_{-}$ and $P'_{-}$ implies that $|X Y| < |X' Y'|$, a contradiction. 
\end{proof}

This brings us to the combinatorial component of Cauchy's proof. A  nice proof of the following lemma appears in~\cite{FuchsTab:2007} and follows from an argument based on the Euler characteristic of a sphere.

\begin{Lemma}[Cauchy's Combinatorial Lemma]\label{lem:cauchycombinatorial}
	Let $P$ be an abstract spherical polyhedron. Then for any labeling of any non-empty subset of the edges of $P$ with $+$ and $-$ signs, there exists a vertex $v$ that is incident to an edge labeled with a $+$ or a $-$ sign for which one encounters at most two sign changes in labels on the edges adjacent to $v$ as one walks around the vertex.
\end{Lemma}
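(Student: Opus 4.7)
The plan is to argue by contradiction via a double count of sign changes combined with Euler's formula $V - E + F = 2$ for the sphere. Suppose, toward a contradiction, that every vertex of $P$ incident to at least one labeled edge has at least four sign changes in the cyclic sequence of labels about it, where unlabeled edges are simply skipped. Let $V^\ast$ denote the number of such vertices and let $N$ be the total number of sign changes, summed over all vertices of $P$.

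The vertex count gives the lower bound $N \geq 4 V^\ast$ immediately. For the upper bound I would sum the same quantity over faces of $P$: each sign change at a vertex $v$ is associated with a corner between two consecutive labeled edges at $v$, and this corner lies in a unique face $f$ of $P$. Hence $N = \sum_f s(f)$, where $s(f)$ counts the sign changes encountered along the labeled portion of $\partial f$. Since sign changes around a closed walk come in pairs, $s(f)$ is even; a short case check yields the uniform bound $s(f) \leq 2(n_f - 2)$ for every face $f$ with $n_f \geq 3$ edges (equality at $n_f \in \{3, 4\}$).

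Combining these estimates with the identities $\sum_f n_f = 2E$ and $F = 2 + E - V$ gives
\[
4 V^\ast \;\leq\; N \;\leq\; 2 \sum_f (n_f - 2) \;=\; 4E - 4F \;=\; 4(V - 2),
\]
so $V^\ast \leq V - 2$. To upgrade this into a genuine contradiction I would pass to the subgraph $H \subseteq P^{(1)}$ consisting of the labeled edges and reapply Euler's formula on the sphere to $H$. Letting $V_H = V^\ast$, $E_H$ the number of labeled edges, and $F_H$ the number of components of $\mathbb{S}^2 \setminus H$, the relation $V_H - E_H + F_H \geq 2$ combined with $\sum_F d_F = 2 E_H$ for the boundary-walk lengths of faces of $H$, and with the sharper parity bound $s(F) \leq 2\lfloor d_F/2 \rfloor$, refines the face-side estimate to something incompatible with the lower bound $N \geq 4 V_H$ whenever the labeling is nonempty.

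The principal obstacle is this last refinement. The faces of $\mathbb{S}^2 \setminus H$ need not be topological disks once $H$ is disconnected, and bridges in $H$ force certain labeled edges to be traversed twice in a boundary walk; one must therefore bound the $d_F$ from below and keep careful track of the correction terms arising from odd-length walks. Making the Euler-based face bound sharp enough to close the gap left by the crude estimate $s(f) \leq 2(n_f - 2)$ is, I expect, where the essential combinatorial insight of Cauchy lives, and is where most of the care in the proof will be required.
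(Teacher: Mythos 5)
Your strategy --- double counting sign changes against Euler's formula --- is exactly the one behind the proof the paper points to (it cites Fuchs--Tabachnikov rather than proving the lemma itself), but as written your argument has two problems. First, the opening computation over the faces of $P$ is not valid: when unlabeled edges are skipped, a sign change between two consecutive \emph{labeled} edges at $v$ does not lie in a unique face of $P$ --- it spans all the faces of $P$ wedged between those two labeled edges --- so the identity $N=\sum_f s(f)$ over faces of $P$ fails, and in any case that phase only yields $V^\ast\leq V-2$, which is no contradiction. One should work with the labeled subgraph $H$ from the outset. Second, and more seriously, the step you defer as ``the principal obstacle'' is the entire proof; a proposal that stops there has not proved the lemma.

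The missing step is, however, short, and your worries about disconnectedness and bridges resolve themselves once you note one fact: if every vertex of $H$ sees at least $4$ sign changes, then every vertex of $H$ has degree at least $4$ in $H$. This forces every boundary walk of every face of $H$ to have length at least $3$ (a walk of length $1$ is a loop, and a length-$2$ walk in a simple graph requires immediate backtracking at a vertex of degree $1$), so every face degree satisfies $d_F\geq 3$ and the elementary inequality $2\lfloor k/2\rfloor\leq 2k-4$ for $k\geq 3$ applies to each boundary component. Hence
\[
4V_H\;\leq\;N\;\leq\;\sum_F 2\lfloor d_F/2\rfloor\;\leq\;\sum_F\bigl(2d_F-4\bigr)\;=\;4E_H-4F_H\;=\;4V_H-4(1+c)\;\leq\;4V_H-8,
\]
where $c\geq 1$ is the number of components of $H$ and Euler's formula for a (possibly disconnected) graph on $\mathbb{S}^2$ gives $V_H-E_H+F_H=1+c$. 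This is the desired contradiction; the ``essential insight'' you were searching for is simply that the minimum-degree bound eliminates the small faces for which $2\lfloor k/2\rfloor>2k-4$.
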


The proof of Cauchy's Rigidity Theorem is now straightforward. Assume we have convex polyhedra $P$ and $P'$ that have the same combinatorics and congruent corresponding faces. For each edge of $P$ label its dihedral angle with a $+$ or a $-$ depending on whether it is larger or smaller than the corresponding dihedral angle in $P'$. If $P$ and $P'$ are not congruent, Cauchy's Combinatorial Lemma provides a vertex $v$ that is incident to an edge labeled with a $+$ or a $-$ sign, and around which there are at most two sign changes. Intersect $P$ with a small sphere centered at $v$ (one that contains no other vertex of $P$ on its interior) to obtain a convex spherical polygon, and intersect $P'$ with a sphere centered at the corresponding vertex $v'$ and of the same radius. By construction both spherical polygons have the same edge lengths, and the angles between edges are given by the dihedral angles between faces at $v$ and $v'$. An application of the Four Vertex Lemma implies that there are at least four sign changes, contradicting that there are at most two. We conclude that $P$ and $P'$ are congruent. 

\subsection{Observations towards a proof for \textit{c}-polyhedra} The remainder of this paper is concerned with developing a Cauchy-style proof of the global rigidity of convex, proper, non-unitary \textit{c}-polyhedra. To that end we make the following observations. First, Cauchy's Combinatorial Lemma is valid for abstract polyhedral graphs, which means that it holds for \textit{c}-polyhedra. Second, though it is straightforward to extend the Four Vertex Lemma to convex hyperbolic polygons, doing so would only get us halfway to our desired result. Instead we generalize the Four Vertex Lemma to the class of green-black polygons.  

Finally, the crucial point of connection between the combinatorial and geometric components of Cauchy's proof is made by intersecting small spheres centered at the vertices of the polyhedra to obtain spherical polygons, the spherical links of the vertices. In the case of \textit{c}-polyhedra it is not at all obvious how to go about obtaining a similar construction as there is no obvious ``sphere'' with which to intersect a \textit{c}-polyhedron to obtain the analogue of a link of a vertex. But this is precisely where the \textit{c}-links of circles in a \textit{c}-polyhedron come into play, which play the role of links of vertices of Euclidean polyhedra. Given our construction of \textit{c}-links, and our version of the Four Vertex Lemma for green-black polygons, the remainder of the proof is essentially Cauchy's original argument. 

\section{The Green-Black Arm and Four Vertex Lemmas}\label{sec:greenblackpolygons}
\subsection{An important property} 

An important property of green-black polygons is stated in the next lemma. 

\begin{Lemma}\label{lem:greenblackcontainment}
Let $P$ be a green-black polygon with ordered vertices $p_1, \dots, p_n$, some of which are green with the remaining ones black. Let $p_i p_{i+1}$ be a green edge. Then the shortest path from $p_i p_{i+1}$ to a vertex $p_j$ meets $p_i p_{i+1}$ at a right angle. Similarly, if $p_j p_{j+1}$ is another green edge, then the shortest path between $p_i p_{i+1}$ and $p_j p_{j+1}$ meets both green edges at right angles. 	
\end{Lemma}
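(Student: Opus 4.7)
The plan is a strip-containment argument exploiting the orthogonality at the endpoints of a green edge. Let $e = p_i p_{i+1}$ lie on the hyperbolic line $\ell$. By the green-black axioms, the two edges of $P$ adjacent to $e$ at $p_i$ and $p_{i+1}$ are black and meet $e$ orthogonally; extending them to full hyperbolic lines $m$ and $m'$, both are perpendicular to $\ell$, so $m, m'$ are ultra-parallel with $\ell$ as their common perpendicular. Convexity of $P$ forces the polygon into the closed half-plane bounded by $m$ containing $p_{i+1}$ and into the closed half-plane bounded by $m'$ containing $p_i$; hence $P \subset S$, the hyperbolic strip bounded by $m$ and $m'$.

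The Key Fact to establish is: for every $q \in S$, the foot on $\ell$ of the hyperbolic perpendicular from $q$ to $\ell$ lies in the segment $e$. I would verify this in the upper half-plane model. After an ambient isometry, place $\ell$ on the positive $y$-axis, so that $p_i = (0, y_i)$ and $p_{i+1} = (0, y_{i+1})$ with $y_i < y_{i+1}$. Then $m$ and $m'$ become Euclidean semicircles centered at the origin of radii $y_i$ and $y_{i+1}$, the strip $S$ becomes the annular sector $\{(x,y) : y_i^2 \leq x^2+y^2 \leq y_{i+1}^2,\ x \geq 0\}$, and the hyperbolic perpendicular from $q = (x,y) \in S$ to $\ell$ is the Euclidean semicircle of radius $\sqrt{x^2+y^2}$ centered at the origin, meeting $\ell$ at $(0, \sqrt{x^2+y^2})$, which lies on $e$. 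The first statement of the lemma is then immediate: every vertex $p_j \in P \subset S$ has its perpendicular foot on $\ell$ inside $e$, so the shortest path from $p_j$ to $e$ is this perpendicular and meets $e$ at a right angle.

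For the second statement, apply the same setup to the other green edge $e' = p_j p_{j+1}$ on its supporting line $\ell'$: the polygon also lies in a strip $S'$ around $e'$, and the analogous Key Fact for $e'$ gives that the perpendicular foot on $\ell'$ of any point of $S'$ lies in $e'$. Since $e \subset P \subset S'$, every $q \in e$ satisfies $d(q, e') = d(q, \ell')$, so minimizing $d(q,q')$ over $e \times e'$ reduces to minimizing the convex function $d(\cdot, \ell')$ on $\ell$ restricted to $e$. This convex function is minimized on $\ell$ at the foot $a$ of the common perpendicular $\gamma$ to the ultra-parallel pair $\ell, \ell'$. A first-variation computation at each endpoint of $e$, using that the interior angle between $e$ and the adjacent black edge is exactly $\pi/2$, shows that the directional derivative of $d(\cdot, \ell')$ along $e$ at these endpoints points non-positively inward; thus the minimum on $e$ occurs at a point where the minimizing geodesic meets $e$ at a right angle. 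The symmetric argument on $e'$ yields perpendicularity to $e'$. Combining, the shortest path between $e$ and $e'$ is the common perpendicular $\gamma$, meeting both green edges at right angles.

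The main obstacle is the first-variation analysis at the endpoints of the green edges: one must rule out the possibility that the minimum of $d(\cdot, \ell')$ on $e$ is attained at an endpoint $p_i$ or $p_{i+1}$ with the minimizing geodesic not perpendicular to $e$. The orthogonality between green and adjacent black edges restricts the direction to the nearest point on $e'$ to lie inside the $\pi/2$ interior angle at the endpoint, making the directional derivative non-positive; the sole degenerate boundary case, where the adjacent black edge itself coincides with $\gamma$, yields a minimizer on the endpoint whose geodesic to $e'$ is still perpendicular to both edges (since it is the black edge, which is perpendicular to $e$ by construction and to $e'$ by the assumed coincidence).
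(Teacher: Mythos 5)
Your proof is correct. For the first assertion it is essentially the paper's argument in different clothing: the paper foliates the convex region cut out by the two black rays at $p_i$ and $p_{i+1}$ by geodesic rays orthogonal to the green edge, while you realize the same containment as a strip between the two perpendiculars $m,m'$ and check in the upper half-plane model that every point of the strip drops its perpendicular foot onto the segment $e$; these are the same Key Fact. For the second assertion your route genuinely differs. The paper stays inside the ray foliation: it first deduces that all four angles of the quadrilateral $p_ip_{i+1}p_jp_{j+1}$ are at most $\pi/2$ and then runs an intermediate-value argument on the angle $\alpha(t)$ that the ray $r_{i+t}$ makes with $e'$, producing a segment orthogonal to both edges. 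You instead minimize the convex function $d(\cdot,\ell')$ on $e$ and use a first-variation estimate at the endpoints (valid because the minimizing geodesic from $p_i$ to $e'$ lies in $P$ and hence inside the right-angled corner at $p_i$) to force the minimum to occur at a critical point, where perpendicularity to $\ell$ is automatic and perpendicularity to $\ell'$ comes for free. The paper's version is more elementary and self-contained; yours uses slightly more machinery (convexity of the distance to a geodesic, the first variation formula) but has the advantage of directly certifying that the common perpendicular is the distance minimizer between the two edges, a point the paper passes over quickly in its final sentence. One small thing to tighten: you assert at the outset that $\ell$ and $\ell'$ are ultra-parallel so that $\gamma$ exists, but this deserves a word --- it follows either from your own critical-point argument together with $d(e,e')>0$ (distinct green edges are never adjacent, so $e$ cannot meet the support line $\ell'$), or from the quadrilateral angle estimate as in the paper; as written it is an unproven standing assumption.
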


\begin{proof}
\begin{figure}
\definecolor{qqwuqq}{rgb}{0.,0.39215686274509803,0.}
\definecolor{qqccqq}{rgb}{0.,0.8,0.}
\begin{tikzpicture}[line cap=round,line join=round,>=triangle 45,x=1.0cm,y=1.0cm]
\clip(-2.,-1.) rectangle (5.25,4.);
\draw [shift={(1.9394786234077395,2.1176536693744743)},color=qqwuqq,fill=qqwuqq,fill opacity=0.10000000149011612] (0,0) -- (-100.77019484592059:0.5016865568647408) arc (-100.77019484592059:-4.151347413809074:0.5016865568647408) -- cycle;
\draw [line width=2.pt,color=qqccqq] (0.,0.)-- (3.52,0.);
\draw [shift={(-5.8220231213872795,0.03913294797687904)}] plot[domain=-0.0067214364915599845:0.5097291484842211,variable=\t]({1.*5.8221546366946875*cos(\t r)+0.*5.8221546366946875*sin(\t r)},{0.*5.8221546366946875*cos(\t r)+1.*5.8221546366946875*sin(\t r)});
\draw [shift={(-7.191707060063219,-0.2510221285563737)}] plot[domain=0.03132166879572527:0.49547359629527593,variable=\t]({1.*8.015638597472556*cos(\t r)+0.*8.015638597472556*sin(\t r)},{0.*8.015638597472556*cos(\t r)+1.*8.015638597472556*sin(\t r)});
\draw [shift={(10.293837382240689,0.26246817148672774)}] plot[domain=2.7258047763626405:3.1722672891853776,variable=\t]({1.*8.557863255676724*cos(\t r)+0.*8.557863255676724*sin(\t r)},{0.*8.557863255676724*cos(\t r)+1.*8.557863255676724*sin(\t r)});
\draw [shift={(8.455070422535208,0.18746478873239517)}] plot[domain=2.52748928177286:3.1738192429936767,variable=\t]({1.*5.818091359377083*cos(\t r)+0.*5.818091359377083*sin(\t r)},{0.*5.818091359377083*cos(\t r)+1.*5.818091359377083*sin(\t r)});
\draw [shift={(7.133333333333329,-0.04666666666666546)}] plot[domain=2.4201593970929074:3.1286782424502406,variable=\t]({1.*3.613634673781444*cos(\t r)+0.*3.613634673781444*sin(\t r)},{0.*3.613634673781444*cos(\t r)+1.*3.613634673781444*sin(\t r)});
\draw [shift={(-2.1037384074687586,4.328577621576414)}] plot[domain=4.853184530829097:5.678720290449778,variable=\t]({1.*2.8770469526602005*cos(\t r)+0.*2.8770469526602005*sin(\t r)},{0.*2.8770469526602005*cos(\t r)+1.*2.8770469526602005*sin(\t r)});
\draw [shift={(2.4246026908253855,6.256465945568261)},line width=2.pt,color=qqccqq]  plot[domain=4.167174405273455:4.843425735949782,variable=\t]({1.*4.167146795873675*cos(\t r)+0.*4.167146795873675*sin(\t r)},{0.*4.167146795873675*cos(\t r)+1.*4.167146795873675*sin(\t r)});
\draw [shift={(5.447677008600061,2.4287467824118942)}] plot[domain=3.263515450557904:4.416692563291038,variable=\t]({1.*2.497123348841284*cos(\t r)+0.*2.497123348841284*sin(\t r)},{0.*2.497123348841284*cos(\t r)+1.*2.497123348841284*sin(\t r)});
\draw [shift={(-3.4058091536138586,1.6928373235353353)},dash pattern=on 1pt off 1pt]  plot[domain=-0.46128008898560857:0.26623349516403105,variable=\t]({1.*3.80331883948667*cos(\t r)+0.*3.80331883948667*sin(\t r)},{0.*3.80331883948667*cos(\t r)+1.*3.80331883948667*sin(\t r)});
\draw [shift={(5.258648972833552,1.5846704453301186)},dash pattern=on 1pt off 1pt]  plot[domain=2.9098174872212534:3.8806911789669023,variable=\t]({1.*2.3524626821775354*cos(\t r)+0.*2.3524626821775354*sin(\t r)},{0.*2.3524626821775354*cos(\t r)+1.*2.3524626821775354*sin(\t r)});
\draw (-1.7,1.48)-- (-1.5612437602217841,1.6253734856487896);
\draw (-1.5370855896286926,1.3998972267799363)-- (-1.7,1.48);
\draw (-0.7352027126083227,2.8714010748348113)-- (-0.7318132365256445,2.656122097620691);
\draw (-0.538547871780913,2.776912950586148)-- (-0.7352027126083227,2.8714010748348113);
\draw (-0.1385207582551038,3.5572616210187755)-- (-0.14396441876041946,3.3083927036341594);
\draw (0.065406393046373,3.437236280130647)-- (-0.1385207582551038,3.5572616210187755);
\draw (2.465118005293456,3.7190816037167123)-- (2.2637999170176935,3.566079856627135);
\draw (2.4973288994175777,3.48555262131683)-- (2.465118005293456,3.7190816037167123);
\draw (3.695053853292478,3.5329736179828797)-- (3.4556029996102047,3.388919938944464);
\draw (3.6569210878859666,3.2600763624479763)-- (3.695053853292478,3.5329736179828797);
\draw (4.42,2.34)-- (4.164242670340887,2.2212750269450448);
\draw (4.365560758616649,2.0843787269175262)-- (4.42,2.34);
\draw (4.72,0.04)-- (4.574931570423441,0.256410485373608);
\draw (4.454140717457984,0.0067760559116631855)-- (4.72,0.04);
\draw (0.1742774571207839,2.5712948075451303)-- (0.31121438391270795,2.498424200456576);
\draw (0.31121438391270795,2.498424200456576)-- (0.3780516325838259,2.626487821065224);
\draw (2.989501624900678,1.9894304928178237)-- (2.8442917870757563,1.9665805109890278);
\draw (2.8284913828164906,2.1089382069217857)-- (2.8442917870757563,1.9665805109890278);
\draw (-0.01009482043263731,0.17134200267427918)-- (0.17751404798472528,0.17915903885833592);
\draw (0.17751404798472528,0.17915903885833592)-- (0.18533108416878205,0.);
\draw (0.8234927846498713,0.18049033606657264)-- (0.9720164721469501,0.18049033606657264);
\draw (0.9720164721469501,0.18049033606657264)-- (0.9689457237422988,0.011504554793939208);
\draw (1.7437450543690916,0.18049033606657264)-- (1.8922687418661701,0.18049033606657264);
\draw (1.8922687418661701,0.18049033606657264)-- (1.8891979934615188,0.011504554793939215);
\draw (2.6454063691444896,0.18978581353848392)-- (2.7939300566415683,0.18978581353848392);
\draw (2.7939300566415683,0.18978581353848392)-- (2.790859308236917,0.02080003226585053);
\draw (3.5159065510459997,0.17882314928834184)-- (3.37024965990007,0.18049033606657264);
\draw (3.37024965990007,0.18049033606657264)-- (3.3671789114954187,0.011504554793939215);
\draw [fill=black] (0.,0.) circle (1.5pt);
\draw[color=black] (-0.10489664274272599,-0.341588666505697) node {$p_i$};
\draw [fill=black] (3.52,0.) circle (1.5pt);
\draw[color=black] (3.6577525337428307,-0.341588666505697) node {$p_{i+1}$};
\draw [fill=black] (1.74,0.) circle (1.5pt);
\draw[color=black] (1.62592197844063,-0.341588666505697) node {$\sigma(t)$};
\draw[color=black] (-0.9577637894127854,2.710337887754797) node {$r_i$};
\draw[color=black] (0.5800670288931902,3.3959761821366063) node {$r_{i+a}$};
\draw[color=black] (2.904548075699823,3.412699067365431) node {$r_{i+t}$};
\draw[color=black] (3.9748127303446033,3.094964248017763) node {$r_{i+b}$};
\draw[color=black] (4.727342565641714,1.9243622820000392) node {$r_{i+1}$};
\draw [fill=black] (0.26351383005755746,2.6934886295830127) circle (1.5pt);
\draw[color=black] (0.6971272254949631,2.9279040514159907) node {$p_{j+1}$};
\draw [fill=black] (2.969090759162815,2.1250442578439968) circle (1.5pt);
\draw[color=black] (3.4396804030222133,2.3257115274918303) node {$p_j$};
\draw[color=black] (-1.3925588053622273,1.1) node {$r_{j+1}$};
\draw[color=black] (4.661125647472312,0.653423004609368) node {$r_j$};
\draw [fill=black] (1.9394786234077395,2.1176536693744743) circle (1.5pt);
\draw[color=black] (1.6510063062838671,2.5765548059241996) node {$q_t$};
\draw[color=qqwuqq] (2.195512010339514,1.3753262166397314) node {$\alpha(t)$};
\end{tikzpicture}
\caption{The construction for Lemma~\ref{lem:greenblackcontainment}.}
\label{fig:GreenEdges}
\end{figure}
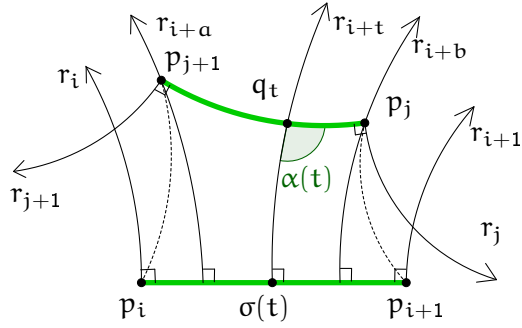
Since the edge $p_i p_{i+1}$ is green, the respective edges $p_{i-1} p_i$ and $p_{i+1} p_{i+2}$ are black and meet the support line $l_i$ of the edge $p_i p_{i+1}$ orthogonally at the respective points $p_{i}$ and $p_{i+1}$. Let $\sigma$ be a unit time parameterization of $p_{i}p_{i+1}$ with $\sigma (0) = p_{i}$ and $\sigma (1) = p_{i+1}$. Extend rays $r_i$ and $r_{i+1}$ outwards from $p_i$ and $p_{i+1}$ along the edges  $p_{i-1} p_i$ and $p_{i+1} p_{i+2}$ and observe that the convex region $R_{i}$ bounded by $r_i$, $r_{i+1}$, and $p_i p_{i+1}$ is the disjoint union of rays $r_{i+t}$ for $0\leq t \leq 1$, where $r_{i+t}$ is the ray in the region $R_{i}$ orthogonal to the segment $p_i p_{i+1}$ at the point $\sigma(t)$. This implies that the nearest point of the edge $p_{i}p_{i+1}$ to any point $q$ of $R_{i}$ is that point $\sigma(t)$ for which $q \in r_{i+t}$, and in this case the shortest path from $q$ to $p_{i}p_{i+1}$ follows the hyperbolic segment from $\sigma(t)$ to $q$, a subsegment of the ray $r_{i+t}$, which therefore meets $p_{i}p_{i+1}$ orthogonally.

Now let $p_{j}p_{j+1}$ be another green edge and extend rays $r_j$ and $r_{j+1}$ outwards along the black edges incident to $p_j$ and $p_{j+1}$. By convexity of the polygon $P$, $p_j$ and $p_{j+1}$ are contained in the convex region $R_i$, and $p_i$ and $p_{i+1}$ are contained in the convex region $R_j$ bounded by $r_j$, $r_{j+1}$, and $p_j p_{j+1}$. This implies that all four interior angles of the hyperbolic quadrilateral $p_{i}p_{i+1}p_{j}p_{j+1}$ are at most $\pi/2$; see Fig.~\ref{fig:GreenEdges}. Let $0\leq a < b \leq 1$ be the $t$-values for which $p_{j+1} \in r_{i+a}$ and $p_{j} \in r_{i+b}$, and for $a\leq t \leq b$, let $q_{t}$ be the point on the edge $p_{j}p_{j+1}$ that lies on the ray $r_{i+t}$. Finally, let $\alpha (t)$ be the measure of the angle $\angle \sigma(t) q_{t} p_{j}$; again see Fig~\ref{fig:GreenEdges}. By our construction and the observation that the angles of the quadrilateral $p_{i}p_{i+1}p_{j}p_{j+1}$ are acute or right, $\alpha (a) \leq \pi/2$ and $\alpha (b)  \geq \pi/2$. By continuity of the function $\alpha$, there is a parameter value $a \leq t \leq b$ such that $\alpha(t) = \pi/2$. The hyperbolic segment from $\sigma(t)$ to $q_{t}$, which lies on the ray $r_{i+t}$, meets both edges $p_{i}p_{i+1}$ and $p_{j}p_{j+1}$ orthogonally. This implies that this segment is the shortest path connecting edges $p_{i}p_{i+1}$ and $p_{j}p_{j+1}$.
\end{proof}

\subsection{Relaxed green-black polygons}

We generalize the concept of green-black polygons slightly. Instead of requiring that green edges meet black edges at right angles, we relax this requirement and allow the measures of these angles to be less than or equal to $\pi / 2$. Call these polygons {\em relaxed green-black polygons}. The proof of Lemma~\ref{lem:greenblackcontainment} may be modified to verify the following.

\begin{Corollary}\label{cor:relaxedgreenblackcontainment}
	Let $P$ be a relaxed green-black polygon with vertices $p_1, \dots, p_n$. Let $p_i p_{i+1}$ be a green edge. Then the shortest path from $p_i p_{i+1}$ to any vertex $p_j$ meets $p_i p_{i+1}$ at a right angle. Similarly, if $p_j p_{j+1}$ is a green edge, then the shortest path between $p_i p_{i+1}$ and $p_j p_{j+1}$ meets both at right angles. 
\end{Corollary}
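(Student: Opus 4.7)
The plan is to follow the proof of Lemma~\ref{lem:greenblackcontainment} with one modification: the rays $r_i$ and $r_{i+1}$, originally defined along the adjacent black edges (which in the strict setting coincide with the perpendiculars to the green edge), are redefined here as the hyperbolic rays perpendicular to the supporting line $\ell$ of $p_i p_{i+1}$ at the endpoints $p_i$ and $p_{i+1}$, emanating into the interior of $P$. Writing $m_i$ and $m_{i+1}$ for the full perpendicular lines to $\ell$ at these points, I redefine $R_i$ to be the Saccheri-like convex region bounded by $r_i$, $r_{i+1}$, and the green edge. As in the original proof, $R_i$ is foliated by perpendicular rays $r_{i+t}$ emanating from $\sigma(t) \in p_i p_{i+1}$, so that the shortest route from any point of $R_i$ to $p_i p_{i+1}$ is orthogonal to the green edge.

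The only nontrivial step is to reestablish that $P \subseteq R_i \cup p_i p_{i+1}$. Let $\ell_{i-1,i}$ denote the hyperbolic line supporting the black edge $p_{i-1} p_i$; by hypothesis it meets $\ell$ at $p_i$ with interior angle $\alpha_i \in (0, \pi/2]$. Let $H_i$ be the closed half-plane bounded by $\ell_{i-1,i}$ that contains $p_{i+1}$ (and hence all of $P$, by convexity), and let $H_\ell$ be the closed half-plane bounded by $\ell$ that contains the remaining vertices of $P$. Then $H_i \cap H_\ell$ is a closed wedge at $p_i$ of opening angle $\alpha_i$, bounded by the ray of $\ell$ from $p_i$ through $p_{i+1}$ and the ray of $\ell_{i-1,i}$ from $p_i$ into $H_\ell$. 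Because $\alpha_i \leq \pi/2$, this wedge is contained in the closed wedge at $p_i$ of opening angle $\pi/2$ bounded by the same green-edge ray and the ray of $m_i$ into $H_\ell$---which is precisely the half of $H_\ell$ lying on the $p_{i+1}$-side of $m_i$. The symmetric argument at $p_{i+1}$ confines $P \cap H_\ell$ to the $p_i$-side of $m_{i+1}$. Thus every vertex $p_j$ other than $p_i$ and $p_{i+1}$ lies in $R_i$.

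Given this containment, the remainder of the proof of Lemma~\ref{lem:greenblackcontainment} goes through verbatim. The vertex $p_j$ sits on a unique ray $r_{i+t}$ of the foliation, and the hyperbolic segment from $p_j$ to $\sigma(t)$ along $r_{i+t}$ realizes the shortest path from $p_j$ to $p_i p_{i+1}$, meeting the green edge orthogonally. For the second assertion with a second green edge $p_j p_{j+1}$, the symmetric containments $p_j, p_{j+1} \in R_i$ and $p_i, p_{i+1} \in R_j$ let one rerun the original continuity argument with the angle function $\alpha(t) = \angle \sigma(t)\, q_t\, p_j$: one has $\alpha(a) \leq \pi/2$ and $\alpha(b) \geq \pi/2$, so by continuity some intermediate $t$ yields a ray $r_{i+t}$ meeting both green edges at right angles, providing the common perpendicular that realizes the shortest path between them.

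The step requiring most care, and the main obstacle, is the wedge-nesting claim in hyperbolic geometry: two distinct geodesics through $p_i$ meet only at $p_i$, so nesting of wedges is not immediate from nesting of the boundary lines. The justification is that a closed wedge at a vertex $v$ in the hyperbolic plane is characterized precisely as the locus of points $q$ for which the initial direction of the geodesic ray from $v$ through $q$ lies in the prescribed angular sector at $v$; therefore nesting of angular sectors at $v$ translates globally into nesting of the wedges they cut out, and the Euclidean-flavored argument above is legitimate in $\mathbb{H}^2$.
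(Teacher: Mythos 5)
Your proof is correct and follows essentially the approach the paper intends: the paper offers no explicit argument for this corollary, stating only that the proof of Lemma~\ref{lem:greenblackcontainment} ``may be modified,'' and your modification---replacing the rays along the adjacent black edges by the perpendicular rays to the supporting line at $p_i$ and $p_{i+1}$, and using the relaxed angle condition $\alpha_i \leq \pi/2$ together with wedge-nesting to recover the containment $P \subseteq R_i$---is exactly the right one. The rest of the original foliation and continuity argument then carries over as you say.
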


\subsection{A four vertex lemma for green-black polygons}\label{sec:fourvertex}
When $X$ and $Y$ are points in the hyperbolic plane $\mathbb{H}^{2}$, we use $|XY|$ to denote the hyperbolic distance between the points. We now prove an analog of Cauchy's Discrete Four Vertex Lemma for green-black polygons. Let $P$ and $P'$ be two black-edge-congruent green-black polygons. We label a green vertex (or a green edge) of $P$ with a plus sign $+$ if the angle (or edge-length) is larger than the corresponding angle (or edge-length) in $P'$, a minus sign $-$ if it is smaller, or no label if equal. The statement of the lemma is now the same as Lemma~\ref{lem:cauchyFourVertex}. 

\begin{Lemma}[Green-Black Polygon Four Vertex Lemma]\label{lem:greenblackfourvertex}
Let $P$ and $P'$ be two black-edge-congruent green-black polygons labeled as in the preceding paragraph. Then either $P$ and $P'$ are congruent (in the usual sense), or a walk around $P$ encounters at least four sign changes (from $-$ to $+$ or from $+$ to $-$). 
\end{Lemma}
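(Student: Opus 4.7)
The plan is to follow Cauchy's original strategy, with a \emph{Green-Black Arm Lemma} playing the role of the classical Cauchy Arm Lemma. The Arm Lemma I envision states that for two black-edge-congruent open green-black chains $Q$ and $Q'$ whose endpoints lie at corresponding black vertices or at corresponding midpoints of black edges, if every green interior angle and every green edge-length of $Q$ is at most the corresponding quantity of $Q'$, then the hyperbolic distance between the endpoints of $Q$ is at most that between the endpoints of $Q'$, with equality forcing term-by-term equality throughout.

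Granting this, the four vertex lemma is a direct adaptation of Cauchy's argument. The number of sign changes around the cyclic boundary is always even, so it suffices to rule out $0$ and $2$ sign changes. Suppose there are no sign changes. If no feature is labeled, then every green angle and every green edge-length of $P$ agrees with that of $P'$, and together with black-edge-congruence this says $P$ and $P'$ are congruent. Otherwise all labels are of the same sign, say $+$. Open $P$ and $P'$ along any common black edge $p_k p_{k+1}$, whose length agrees in the two polygons by black-edge-congruence, and apply the Arm Lemma to the resulting open chain $p_{k+1}, p_{k+2}, \dots, p_k$. Since at least one green feature is strictly larger in $P$, we obtain $|p_{k+1} p_k| > |p_{k+1}' p_k'|$, contradicting the equality of the black-edge length $|p_k p_{k+1}|=|p_k' p_{k+1}'|$.

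Now suppose there are exactly two sign changes. In a green-black polygon, consecutive labeled features (green edges or green vertices) are always separated by at least one black edge, because every green vertex lies between two black edges and every green edge has two black endpoints. We may therefore choose two black edges $b_1, b_2$ at which the sign transitions occur: $b_1$ between the final $+$ feature and the first $-$ feature, and $b_2$ between the final $-$ and the first $+$. Cut $P$ at the midpoints $X \in b_1$ and $Y \in b_2$, and cut $P'$ at the corresponding midpoints $X',Y'$. This splits $P$ into two open green-black subchains: $P_+$, running from $X$ to $Y$ and carrying all $+$ labels together with possibly some unlabeled features, and $P_-$, running from $Y$ back to $X$ and carrying all $-$ labels together with possibly some unlabeled features. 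Each subchain is black-edge-congruent to its primed counterpart and contains at least one strict green inequality. Applying the Arm Lemma to $(P_+,P_+')$ gives $|XY|>|X'Y'|$, while applying it to $(P_-,P_-')$ gives $|XY|<|X'Y'|$, a contradiction.

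The main obstacle will be proving the Green-Black Arm Lemma itself. The classical Cauchy Arm Lemma proceeds by induction, collapsing one vertex at a time via a hinge/trigonometric argument. In the green-black setting two families of green parameters vary simultaneously: the real angles at green vertices and the lengths of green edges (which encode the imaginary parts of hyperideal complex angles), while the right angles at black vertices are rigid. The orthogonality furnished by Lemma~\ref{lem:greenblackcontainment} is the essential geometric tool, since it ensures that an infinitesimal increase in a green parameter deforms the remaining chain by a rotation or translation about an axis orthogonal to the corresponding hinge, and so produces a monotonic effect on the endpoint chord. Executing this induction with a careful treatment of the equality case, and handling the interplay between green-vertex hinges and green-edge (hyperideal) hinges on an equal footing, is where the technical work lies.
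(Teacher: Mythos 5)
Your overall strategy is exactly the paper's: parity forces an even number of sign changes, the two--sign-change case is killed by cutting at midpoints of two separating black edges and applying a green-black arm lemma to the two resulting subchains in opposite directions, and the zero--sign-change case is killed by a single application of the arm lemma. Your treatment of the two--sign-change case coincides with the paper's essentially verbatim.

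The one place you deviate is the zero--sign-change case, and the deviation opens a small gap. You open $P$ along a black edge $p_kp_{k+1}$ and apply the arm lemma to the full complementary chain $p_{k+1},\dots,p_k$. But the arm lemma the paper actually proves (Lemma~\ref{lem:greenblackarm}) applies only to \emph{green-black arm chains}, which by definition begin and end with black edges. The chain $p_{k+1},\dots,p_k$ may begin or end with a green edge --- for instance, in a green-black polygon with strictly alternating green and black edges every black edge is flanked by two green edges --- and then its free endpoints sit on green edges whose lengths vary, a configuration the induction in the arm lemma does not cover. Your own statement of the arm lemma quietly absorbs this by allowing endpoints at ``black vertices,'' but that is a strictly stronger lemma than the one available, and proving it would require an extra argument (one would have to show, e.g.\ via Lemma~\ref{lem:greenblackcontainment}, that lengthening a \emph{terminal} green edge still pushes the free endpoint away from the other endpoint). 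The paper sidesteps this entirely: it chooses two interior points $X,Y$ on the open black edge $p_ip_{i+1}$, ordered $p_i,X,Y,p_{i+1}$, and applies the arm lemma to the chain $Y,p_{i+1},\dots,p_i,X$, which begins and ends with black subsegments of $p_ip_{i+1}$ and is therefore a bona fide green-black arm chain; the contradiction is then $|XY|<|X'Y'|$ against $|XY|=|X'Y'|$ by construction. Adopting that device brings your argument fully in line with the arm lemma as stated.
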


\begin{proof}
	The proof follows the same sketch as in Cauchy's original. New are the details of extending the arm lemma to green-black polygons, which is accomplished by Lemma~\ref{lem:greenblackarm} below. As before the number of sign changes must be even, so we assume there are either two or zero sign changes. Denote the vertices of $P$ in (counter-clockwise) order by $p_1, \dots, p_n$ and the corresponding vertices of $P'$ by $p'_1, \dots, p'_n$. 

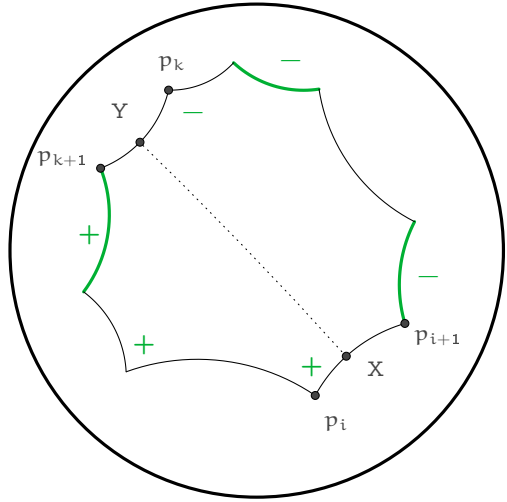
\begin{figure}
\definecolor{greenedge}{rgb}{0.,0.69215686274509803,0.2}

\definecolor{qqccqq}{rgb}{0.,0.69215686274509803,0.2}
\definecolor{uuuuuu}{rgb}{0.26666666666666666,0.26666666666666666,0.26666666666666666}
\begin{tikzpicture}[line cap=round,line join=round,>=triangle 45,x=1.0cm,y=1.0cm]
\clip(-3.5,-3.5) rectangle (3.5,3.5);
\draw [line width=1.2pt] (0.,-0.02) circle (3.28cm);
\draw [shift={(-3.667217832595169,0.4569294489406403)},line width=1.2pt,color=qqccqq]  plot[domain=-0.6438439087145857:0.37090862791196577,variable=\t]({1.*1.7080832330336952*cos(\t r)+0.*1.7080832330336952*sin(\t r)},{0.*1.7080832330336952*cos(\t r)+1.*1.7080832330336952*sin(\t r)});
\draw [shift={(0.622459846128109,3.4772520364127475)},line width=1.2pt,color=qqccqq]  plot[domain=3.9622428628893167:4.863708042174372,variable=\t]({1.*1.3637551342652872*cos(\t r)+0.*1.3637551342652872*sin(\t r)},{0.*1.3637551342652872*cos(\t r)+1.*1.3637551342652872*sin(\t r)});
\draw [shift={(3.7312779859576266,-0.47264848672367893)},line width=1.2pt,color=qqccqq]  plot[domain=2.6675970598300136:3.42672191285405,variable=\t]({1.*1.8354634458428318*cos(\t r)+0.*1.8354634458428318*sin(\t r)},{0.*1.8354634458428318*cos(\t r)+1.*1.8354634458428318*sin(\t r)});
\draw [shift={(-2.611735369123523,2.455413256516329)}] plot[domain=5.083297608296656:6.05145783060787,variable=\t]({1.*1.4800109556579497*cos(\t r)+0.*1.4800109556579497*sin(\t r)},{0.*1.4800109556579497*cos(\t r)+1.*1.4800109556579497*sin(\t r)});
\draw [shift={(-1.1447914011904767,3.26)}] plot[domain=4.689245534654762:5.5330391896842075,variable=\t]({1.*1.1447914011904818*cos(\t r)+0.*1.1447914011904818*sin(\t r)},{0.*1.1447914011904818*cos(\t r)+1.*1.1447914011904818*sin(\t r)});
\draw [shift={(3.187659541298706,2.48888693209953)}] plot[domain=3.292911715379478:4.238393386624912,variable=\t]({1.*2.3868990320691155*cos(\t r)+0.*2.3868990320691155*sin(\t r)},{0.*2.3868990320691155*cos(\t r)+1.*2.3868990320691155*sin(\t r)});
\draw [shift={(2.532800015279226,-2.909259764506492)}] plot[domain=1.8559255860591528:2.640078689604578,variable=\t]({1.*2.0011241601146508*cos(\t r)+0.*2.0011241601146508*sin(\t r)},{0.*2.0011241601146508*cos(\t r)+1.*2.0011241601146508*sin(\t r)});
\draw [shift={(-0.7874568535451342,-4.249797904941598)}] plot[domain=0.9736786681253572:1.9183851272700463,variable=\t]({1.*2.7843991475439607*cos(\t r)+0.*2.7843991475439607*sin(\t r)},{0.*2.7843991475439607*cos(\t r)+1.*2.7843991475439607*sin(\t r)});
\draw [shift={(-3.1525200300893315,-1.7028016674997792)}] plot[domain=0.04999814551398871:0.9269524180803124,variable=\t]({1.*1.418380764200669*cos(\t r)+0.*1.418380764200669*sin(\t r)},{0.*1.418380764200669*cos(\t r)+1.*1.418380764200669*sin(\t r)});
\draw [shift={(-45.222381048938445,-43.38669514833759)},dotted]  plot[domain=0.7350722314458603:0.7982657677826219,variable=\t]({1.*62.569765828427364*cos(\t r)+0.*62.569765828427364*sin(\t r)},{0.*62.569765828427364*cos(\t r)+1.*62.569765828427364*sin(\t r)});
\draw (-1.128186224785348,2.083877593375715) node[anchor=north west] {\color{greenedge}$-$};
\draw (0.17156644600438709,2.7752353969872714) node[anchor=north west] {\color{greenedge}$-$};
\draw (2.000331232550146,-0.10081306603680337) node[anchor=north west] {\color{greenedge}$-$};
\draw (0.44576387959347405,-1.3069855518152937) node[anchor=north west] {\color{greenedge}+};
\draw (-1.781272467674597,-1.0027881182262089) node[anchor=north west] {\color{greenedge}+};
\draw (-2.5109018320084706,0.4522731768524418) node[anchor=north west] {\color{greenedge}+};
\begin{scriptsize}
\draw [fill=uuuuuu] (-1.1712834537667531,2.115515171184019) circle (1.5pt);
\draw[color=uuuuuu] (-1.128186224785348,2.4752353969872714) node {$p_k$};
\draw [fill=uuuuuu] (0.7781028422065807,-1.9472120863712383) circle (1.5pt);
\draw[color=uuuuuu] (1.0565044346271855,-2.313158037593784) node {$p_i$};
\draw [fill=uuuuuu] (1.9699208216319182,-0.9889303871460553) circle (1.5pt);
\draw[color=uuuuuu] (2.4115657297058455,-1.1793312396708313) node {$p_{i+1}$};
\draw [fill=uuuuuu] (-2.0752870147006344,1.0760454332347607) circle (1.5pt);
\draw[color=uuuuuu] (-2.579173392730783,1.1989396047529228) node {$p_{k+1}$};
\draw [fill=uuuuuu] (1.1907646329181594,-1.4248612544406971) circle (1.5pt);
\draw[color=uuuuuu] (1.581936365371972,-1.5941459218377654) node {X};
\draw [fill=uuuuuu] (-1.5518303161886475,1.422440019868509) circle (1.5pt);
\draw[color=uuuuuu] (-1.8195440283969093,1.8349887840755548) node {Y};
\end{scriptsize}
\end{tikzpicture}

\caption{The construction for the contradiction in the proof of Lemma~\ref{lem:greenblackfourvertex}.}
\label{fig:fourvertex}
\end{figure}

Suppose there are exactly two sign changes. Pick a pair of black edges $p_i p_{i+1}$ and $p_k p_{k+1}$ of $P$ such that all of the $-$ signs occur on the counter-clockwise walk from $p_i$ to $p_{k+1}$, and all the $+$ signs occur on the counter-clockwise walk from $p_k$ to $p_{i+1}$. Let $X$, $Y$, $X'$, and $Y'$ be the respective midpoints of the edges $p_i p_{i+1}$,$p_k p_{k+1}$,$p'_i p'_{i+1}$, and $p'_k p'_{k+1}$. Fig.~\ref{fig:fourvertex} shows the construction in $P$. By applying the green-black arm lemma, Lemma~\ref{lem:greenblackarm} below, to the chains $X,p_{i+1},p_{i+2},\dots,p_k,Y$ and $X',p'_{i+1},p'_{i+2}, \dots , p'_k, Y'$, we have $|XY| < |X'Y'|$. Similarly, by applying the green-black arm lemma to $Y, p_{k+1},p_{k+2},\dots,p_i, X$ and  $Y,p_{k+1},p_{k+2},\dots,p_i, X$, we have that $|XY| > |X'Y'|$, a contradiction. Hence there cannot be two sign changes on $P$.
	
Suppose then there are no sign changes but some green angles or edges of $P$ are marked, say with a $-$ sign. Choose any black edge $p_i p_{i+1}$ and choose two distinct points $X$ and $Y$ on the open edge from $p_{i}$ to $p_{i+1}$ ordered as $p_{i}, X,Y,p_{i+1}$. Since the green-black polygons $P$ and $P'$ are black-edge congruent, $|p_{i}p_{i+1}| = |p_{i}'p_{i+1}'|$ and we may choose corresponding points $X'$ and $Y'$ on the open edge from $p_{i}'$ to $p_{i+1}'$, so that $|p_{i}X| = |p_{i}'X'|$, $|XY| = |X'Y'|$, and $|Yp_{i+1}| = |Y' p_{i+1}'|$.  By applying the green-black arm lemma to the chains $Y, p_{i+1}, p_{i+2}, \dots, p_i, X$ and $Y', p'_{i+1}, p'_{i+2}, \dots, p'_i$, X, we have that $|XY| < |X'Y'|$, a contradiction. Hence either $P$ and $P'$ are congruent, or $P$ contains at least four sign changes. 
\end{proof}

\subsection{The green-black arm lemma}In this section we state and prove the Green-Black Arm Lemma, beginning with a definition.

\begin{Definition}[Green-black arm chain] Any subchain of consecutive edges of a relaxed green-black polygon that begins and ends with black edges is called a \textit{green-black arm chain}. The two vertices of unit valence are called the \textit{free vertices}. We define two green-black arm chains to be {\em compatible} and {\em black-edge congruent} in the same way as for green-black polygons. We say that the green-black arm chain is \textit{determined} by its vertices and we use the list of its vertices in order to name the chain. 
\end{Definition}
Notice that Corollary~\ref{cor:relaxedgreenblackcontainment} applies to green-black arm chains, and that connecting the two free vertices of a green-black arm chain with a black segment produces a relaxed green-black polygon, which always is convex. This latter observation, that a green-black arm chain produces a convex polygon when the free vertices are connected by a segment, is important in the proof.

\begin{Lemma}[Green-Black Arm Lemma]\label{lem:greenblackarm}
	Let $p_1, p_2, \dots, p_n$ and $p'_1, p'_2, \dots, p'_n$ determine the two respective compatible, black-edge congruent, green-black arm chains $P$ and $P'$ such that:
\begin{enumerate}
\item if edge $p_i p_{i+1}$ is green, then $|p_i p_{i+1}| \leq |p'_i p'_{i+1}|$, 
\item if $\angle p_i$ is green, then $\angle p_i  \leq \angle  p'_i $, and otherwise, $\angle  p_i  = \angle  p'_i$,
\item if $p_i p_{i+1}$ is green, then $\angle  p_i$ equals $\pi /2$. 
\end{enumerate}
Then $|p_1 p_n| \leq |p_1' p_n'|$, with equality if and only if all of the corresponding angles and edges are congruent. 
\end{Lemma}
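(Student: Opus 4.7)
My plan is to proceed by induction on the number of vertices $n$. The base case $n = 3$ is immediate from the hyperbolic law of cosines: both edges must be black (green edges require black flanks on both sides, and the arm chain begins and ends with black), so $\angle p_2$ is a green variable angle, and $\cosh|p_1 p_3| = \cosh|p_1 p_2|\cosh|p_2 p_3| - \sinh|p_1 p_2|\sinh|p_2 p_3|\cos\angle p_2$ is strictly increasing in $\angle p_2$ on $[0,\pi]$, giving $|p_1 p_3| \leq |p'_1 p'_3|$ with equality iff $\angle p_2 = \angle p'_2$.

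For the inductive step with $n \geq 4$, the strategy is to continuously deform $P$ into an auxiliary chain $P^\star$ whose green angles and green edge lengths all coincide with those of $P'$. Once the deformation completes, $P^\star$ and $P'$ share all black edge lengths, all non-green vertex angles (equal by condition (2)), and all green parameters, so an SAS-style argument shows them congruent as hyperbolic chains and $|p_1^\star p_n^\star| = |p'_1 p'_n|$. It thus suffices to show that along the deformation $|p_1 p_n|$ is non-decreasing, and strictly increasing whenever a green parameter strictly increases. The deformation proceeds through two elementary moves: (i) opening a green vertex angle $\angle p_i$, which rigidly rotates the tail $p_{i+1},\ldots,p_n$ about $p_i$; and (ii) lengthening a green edge $p_j p_{j+1}$, which translates the tail $p_{j+1},\ldots,p_n$ along the supporting geodesic $\ell$ of the green edge via a hyperbolic translation (condition (3) ensures that the right angle at $p_j$ is preserved).

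Monotonicity under (i) is the classical Cauchy-arm ingredient: by the hyperbolic law of cosines applied to triangle $p_1 p_i p_n$, $|p_1 p_n|$ is strictly increasing in $\angle p_1 p_i p_n$ on $[0,\pi]$, and opening $\angle p_i$ rigidly rotates the tail so that $\angle p_1 p_i p_n$ opens by the same amount, with convexity of the chain (closed by the segment $p_n p_1$) keeping this angle in $[0,\pi]$. Monotonicity under (ii) is cleanest in Fermi coordinates $(t,h)$ based on $\ell$: the distance between two points on the same side of $\ell$ satisfies $\cosh d = \cosh h_1 \cosh h_n \cosh(t_n - t_1) - \sinh h_1 \sinh h_n$, strictly monotone in $|t_n - t_1|$. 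Applying Corollary~\ref{cor:relaxedgreenblackcontainment} to the closed polygon, the feet of $p_1$ and of $p_n$ on $\ell$ both lie in the green edge $p_j p_{j+1}$, and convexity places the foot of $p_1$ on the $p_j$-half and that of $p_n$ on the $p_{j+1}$-half, so a translation of the tail away from $p_j$ strictly increases $|t_n - t_1|$, and hence $|p_1 p_n|$.

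The main obstacle is justifying the convexity-based placement of the perpendicular feet in move (ii) and controlling the continuous deformation so that the chain stays in the class of convex (relaxed) green-black arm chains for which Corollary~\ref{cor:relaxedgreenblackcontainment} applies. If some elementary move threatens to flatten two consecutive black edges into a collinear pair (an interior black-black angle reaching $\pi$), we interrupt the deformation, merge the pair into a single black edge of combined length, and invoke the inductive hypothesis on the resulting arm chain of $n-1$ vertices. The equality clause follows from strict monotonicity of each elementary move: if $|p_1 p_n| = |p'_1 p'_n|$, then every move in the deformation must have been trivial, so every green angle and green edge of $P$ already coincides with that of $P'$.
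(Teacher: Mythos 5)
Your overall architecture---deform $P$ toward $P'$ by opening green angles and stretching green edges along their supporting geodesics, and show the chord $|p_1p_n|$ is monotone along the way---is the same strategy as the paper's, and your monotonicity computations are sound: the rotation move is the classical hyperbolic Cauchy ingredient, and your Fermi-coordinate computation for the translation move is equivalent to the paper's hypercycle argument for (\dag) (the needed ordering $t_1\le t_n$ of the perpendicular feet does follow from Corollary~\ref{cor:relaxedgreenblackcontainment} together with the fact that the perpendicular to $\ell$ through $p_1$ is disjoint from every other perpendicular to $\ell$ and separates the closed convex polygon). The genuine gap is in your treatment of the loss of convexity, which is the crux of the entire lemma---it is the green-black analogue of the Steinitz gap in Cauchy's 1813 argument, and the paper devotes Fig.~\ref{fig:convexity} to warning against precisely the shortcut you take. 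Both of your elementary moves transport the tail of the chain \emph{rigidly}, so no interior angle between two consecutive edges of the chain ever changes; in particular an ``interior black--black angle reaching $\pi$'' never occurs, and your proposed repair (merge two collinear black edges and drop to $n-1$ vertices) is never triggered. What actually degenerates is the angle at a free vertex against the closing chord: the configuration in which $p_n^{*}$, $p_1$, $p_2$ become collinear, after which the closed polygon is no longer convex, Corollary~\ref{cor:relaxedgreenblackcontainment} no longer applies, and your foot-ordering claim and monotonicity arguments lose their footing.

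Repairing this requires halting the flow at the degenerate position and passing to the subchain $p_2,\dots,p_n$ via $|p_1p_n^{*}| = |p_2p_n^{*}| - |p_1p_2|$ and the triangle inequality---and that recursion fails exactly when $p_2p_3$ is green, since then $p_2,\dots,p_n$ does not begin with a black edge, is not a green-black arm chain, and the inductive hypothesis does not apply to it. That residual case is where the real work of the paper's proof lies: the basis $M=1$ of the induction (\ddag), which requires stretching \emph{both} extremal green edges toward their target lengths, restoring convexity by a backward flow, and invoking the separate geometric Lemma~\ref{RegionRflow} (see Fig.~\ref{fig:actiii}). Your proposal contains no substitute for this step, and without it the induction (whether organized on $n$ as you do, or on the number of green edges as the paper does, bottoming out at the classical hyperbolic Arm Lemma) does not close.
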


\subsubsection{How not to prove the Green-Black Arm Lemma}
To prove the lemma, we will induct on $N$, the number of green edges of the green-black arm chain. Before the proof, though, we think it instructive to discuss how this induction argument may lead to a subtle mistake if one is not careful. If $N=0$, The Green-Black Arm Lemma is just the classical Cauchy Arm Lemma in the hyperbolic plane. Assume then that the lemma holds for $N-1$ green edges and consider a green-black arm chain with $N$ green edges. Choose a green edge $e$ and lengthen it continuously keeping all other edge-lengths and angles constant. Then the free vertices $p_{1}$ and $p_{n}$ of the arm chain increase their distance. Now change the color of $e$ from green to black and apply the inductive hypothesis. QED?

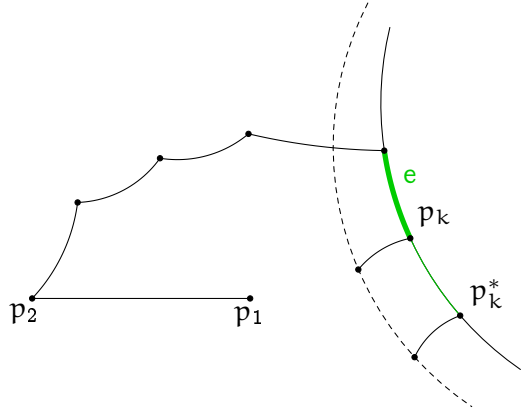
\begin{figure}
\definecolor{qqccqq}{rgb}{0.,0.8,0.}
\begin{tikzpicture}[line cap=round,line join=round,>=triangle 45,x=1.0cm,y=1.0cm]
\clip(-0.5,-1.75) rectangle (7.,4.25);
\draw (0.,0.)-- (2.8978970642754205,0.);
\draw [shift={(-1.6668796107256107,1.570600368565681)}] plot[domain=5.527517327304325:6.154703729122803,variable=\t]({1.*2.290256132923001*cos(\t r)+0.*2.290256132923001*sin(\t r)},{0.*2.290256132923001*cos(\t r)+1.*2.290256132923001*sin(\t r)});
\draw [shift={(0.5544505099798689,2.6840804197465795)}] plot[domain=4.747947108996147:5.662503645760434,variable=\t]({1.*1.4078167836761917*cos(\t r)+0.*1.4078167836761917*sin(\t r)},{0.*1.4078167836761917*cos(\t r)+1.*1.4078167836761917*sin(\t r)});
\draw [shift={(1.9321382217162244,3.3169748203163)}] plot[domain=4.553607976848908:5.4095049694918735,variable=\t]({1.*1.4701597990442734*cos(\t r)+0.*1.4701597990442734*sin(\t r)},{0.*1.4701597990442734*cos(\t r)+1.*1.4701597990442734*sin(\t r)});
\draw [shift={(4.808643703667634,10.41299558177477)}] plot[domain=4.481553943625864:4.697280278014923,variable=\t]({1.*8.44724515632759*cos(\t r)+0.*8.44724515632759*sin(\t r)},{0.*8.44724515632759*cos(\t r)+1.*8.44724515632759*sin(\t r)});
\draw [shift={(5.269203213056361,-0.384375920364719)}] plot[domain=1.772472467164058:2.4521353709337834,variable=\t]({1.*1.2091401113615265*cos(\t r)+0.*1.2091401113615265*sin(\t r)},{0.*1.2091401113615265*cos(\t r)+1.*1.2091401113615265*sin(\t r)});
\draw [shift={(6.059055672741732,-1.2411589202292965)}] plot[domain=1.9201090052153136:2.701104244127561,variable=\t]({1.*1.078229983220804*cos(\t r)+0.*1.078229983220804*sin(\t r)},{0.*1.078229983220804*cos(\t r)+1.*1.078229983220804*sin(\t r)});
\draw [shift={(8.94007485245214,2.595431923674416)}] plot[domain=2.9045864866942277:4.107438847030718,variable=\t]({1.*4.3052084449017505*cos(\t r)+0.*4.3052084449017505*sin(\t r)},{0.*4.3052084449017505*cos(\t r)+1.*4.3052084449017505*sin(\t r)});
\draw [shift={(8.940074852452163,2.595431923674424)},line width=2.pt,color=qqccqq]  plot[domain=3.2881532320137667:3.571709857250105,variable=\t]({1.*4.305208444901776*cos(\t r)+0.*4.305208444901776*sin(\t r)},{0.*4.305208444901776*cos(\t r)+1.*4.305208444901776*sin(\t r)});
\draw [shift={(8.176557658387166,2.0236651549709816)},dash pattern=on 2pt off 2pt]  plot[domain=2.6671932844493353:4.119771475085445,variable=\t]({1.*4.175401036391082*cos(\t r)+0.*4.175401036391082*sin(\t r)},{0.*4.175401036391082*cos(\t r)+1.*4.175401036391082*sin(\t r)});
\draw [shift={(8.940074852452174,2.595431923674442)},color=qqccqq]  plot[domain=3.5717098572501076:3.8568718470072274,variable=\t]({1.*4.305208444901793*cos(\t r)+0.*4.305208444901793*sin(\t r)},{0.*4.305208444901793*cos(\t r)+1.*4.305208444901793*sin(\t r)});
\draw [fill=black] (0.,0.) circle (1.0pt);
\draw[color=black] (-0.10119772587020531,-0.19616906087263028) node {$p_2$};
\draw [fill=black] (2.8978970642754205,0.) circle (1.0pt);
\draw[color=black] (2.8884955132091332,-0.23377526513777927) node {$p_1$};
\draw [fill=black] (0.6044992918951508,1.2771535503503149) circle (1.0pt);
\draw [fill=black] (1.6996844020413246,1.8653085169102976) circle (1.0pt);
\draw [fill=black] (2.875994335161289,2.1898078088054604) circle (1.0pt);
\draw [fill=black] (4.681021646328129,1.966714545627538) circle (1.0pt);
\draw [fill=black] (4.33624114868952,0.38478049763861927) circle (1.0pt);
\draw [fill=black] (5.08374972570614,-0.7814218144902223) circle (1.0pt);
\draw [fill=black] (5.026998204951862,0.8002575853882048) circle (1.0pt);
\draw[color=black] (5.351701892576387,1.101244986275009) node {$p_k$};
\draw [fill=black] (5.690029223252711,-0.22804521669258904) circle (1.0pt);
\draw[color=black] (6.047416671481641,0.06707436898341249) node {$p^*_k$};
\draw[color=qqccqq] (5.022647605256334,1.6183302949208076) node {e};
\end{tikzpicture}
\caption{Convexity is lost in flowing $p_{k}$ to $p_{k}^{*}$.}
\label{fig:convexity}
\end{figure}
Fig.~\ref{fig:convexity} illustrates the problem. When increasing the length of the one green edge $e$, there is no guarantee that the final result is a green-black arm chain to which the inductive hypothesis applies. The problem is that of \textit{convexity}. The final resulting arm after lengthening $e$ might fail to be convex when the free vertices are connected, so the inductive hypothesis fails to apply. This is the green-black version of the mistake Cauchy made in his original argument of 1813 that subsequently was noticed and repaired by Ernst Steinitz over 100 years later. This loss of convexity is the problem that necessitates a more careful analysis, to which we now turn.

\subsubsection{The proof of the Green-Black Arm Lemma} 
Despite the caveat of the preceding paragraph, the proof does proceed by induction on $N$, the number of green edges. The basis of the induction is when $N = 0$, and as stated already, is just the hyperbolic version of the classical Cauchy Arm Lemma. The proof in this case is exactly that of I.\,Schoenberg's reported on pp.\,340-341 of \cite{FuchsTab:2007}, but in which the distances are interpreted as hyperbolic distances. Assume then that for some $N \geq 1$, the lemma is true whenever there are no more than $N-1$ green edges. By reversing the order if necessary, we may assume that the number of vertices between the initial vertex $p_{1}$ and the nearest green edge is at least as large as the number between the terminal vertex $p_{n}$ and its nearest green edge. Let $e=p_{i}p_{i+1}$ denote the last green edge one meets as one traverses the chain $P$ from its first to last vertex and note that $i \leq n - 2$.

In the paragraph following, we are going to produce a new green-black arm chain that will be obtained by fixing the vertices $p_{1}, \dots ,p_{i}$ of $P$ and elongating the edge $e$ to obtain the edge $e^{*} = p_{i}p_{i+1}^{*}$ by translating $p_{i+1}$ to $p_{i+1}^{*}$ along the hyperbolic line $\ell$ supporting $e$. The remaining vertices $p_{j}$ for $j >i$ will rigidly translate via the hyperbolic flow $T_{\ell}$ with axis $\ell$ to the respective vertices $p_{j}^{*}$. This new green-black arm chain determined by $p_{1} \dots, p_{i} , p_{i+1}^{*}, \dots ,p_{n}^{*}$ is denoted as $P^{*}$ and will have the same corresponding interior angles as $P$ as well as the same green edge lengths, except for the single edge $e$ for which $|e| \leq |e^{*} |\leq |e' |$, where $e' = p_{i}'p_{i+1}'$. In particular, $P^{*}$ and $P'$ will be compatible, black-edge congruent, green-black arm chains that satisfy conditions (1)--(3) of the lemma.

To obtain the intermediate green-black arm chain $P^{*}$, continuously increase the length of $e$ by flowing the points $p_{j}$ for $j >i$ via $T_{\ell}$ until one of two possibilities occurs. Either (I) the hyperbolic length of $e^{*} = p_{i}p_{i+1}^{*}$ is equal to that of $e' = p_{i}'p_{i+1}'$ and the polygon formed by connecting the free vertices of $P^{*}$ is convex, or (II) we reach the situation where $p_{1}$, $p_{2}$, and $p_{n}^{*}$ become collinear, all lying on the hyperbolic line supporting the edge $p_{1}p_{2}$, before $e^{*}$ is equal in length to $e'$. In this latter case, to continue the flow until the lengths of $e^{*}$ and $e'$ agree would produce a non-convex polygon. Our first task in either of the cases is to show that (\dag) $|p_{1}p_{n}| \leq  |p_{1} p_{n}^{*}|$ and that the inequality is strict if and only if $e \neq e^{*}$, i.e., the length of $e^{*}$ is strictly larger than that of $e$. After this is verified, we consider the two cases, (I) and (II), in turn.

\textit{Verifying} (\dag): \textit{$|p_{1}p_{n}| \leq  |p_{1} p_{n}^{*}|$ with strict inequality if and only if $e \neq e^{*}$}. We use the following fact of elementary hyperbolic geometry.
\begin{figure}
\definecolor{qqccqq}{rgb}{0.,0.8,0.}
\begin{tikzpicture}[line cap=round,line join=round,>=triangle 45,x=1.0cm,y=1.0cm]
\clip(-4.,-1.) rectangle (2.75,5.6);
\draw [shift={(13.727320799059926,-4.992209165687427)}] plot[domain=2.5326737967699557:2.9203620430202832,variable=\t]({1.*16.735188216209988*cos(\t r)+0.*16.735188216209988*sin(\t r)},{0.*16.735188216209988*cos(\t r)+1.*16.735188216209988*sin(\t r)});
\draw [shift={(2.6716427088845567,10.511143865801573)}] plot[domain=3.92878274186884:4.583166294721245,variable=\t]({1.*7.6953045399545354*cos(\t r)+0.*7.6953045399545354*sin(\t r)},{0.*7.6953045399545354*cos(\t r)+1.*7.6953045399545354*sin(\t r)});
\draw [shift={(0.6029982859424949,6.6334100443398745)},dash pattern=on 2pt off 2pt]  plot[domain=3.7416092351244186:4.935594987233244,variable=\t]({1.*5.407556435137897*cos(\t r)+0.*5.407556435137897*sin(\t r)},{0.*5.407556435137897*cos(\t r)+1.*5.407556435137897*sin(\t r)});
\draw [shift={(2.6716427088845367,10.51114386580153)},line width=2.pt,color=qqccqq]  plot[domain=4.135427445907937:4.372441422746121,variable=\t]({1.*7.695304539954489*cos(\t r)+0.*7.695304539954489*sin(\t r)},{0.*7.695304539954489*cos(\t r)+1.*7.695304539954489*sin(\t r)});
\draw (-0.7572281089805173,3.391276048115255)-- (-0.9228671759279671,3.4927098392512934);
\draw (-0.9228671759279671,3.4927098392512934)-- (-0.8143007638919483,3.6555594573053183);
\draw (-1.6849057577082065,1.5226682128209137)-- (-1.8555712478500135,1.5926666145083266);
\draw (-1.8555712478500135,1.5926666145083266)-- (-1.7710472536614918,1.7810927006807693);
\draw (-2.7610685417716825,1.003468245450142) node[anchor=north west] {$p_1$};
\draw (-1.6535555503912052,1.614509895866956) node[anchor=north west] {$q$};
\draw (-0.2787118369533713,1.2135138127809217) node[anchor=north west] {$p_n$};
\draw (0.8669912575781569,1.2517039159319725) node[anchor=north west] {$p_n^*$};
\draw (1.9554091973831087,1.881840617924312) node[anchor=north west] {$h$};
\draw (1.8026487847789048,3.3139694860887197) node[anchor=north west] {$\ell$};
\draw (-0.6797079200394062,3.428539795541872) node[anchor=north west] {$u$};
\draw (0.3514248650389692,4.020486394383161) node[anchor=north west] {$p_{i+1}$};
\draw (-1.5198901893625267,4.956143921583907) node[anchor=north west] {$p_i$};
\draw (0.16047434928371448,5.318949901518891) node[anchor=north west] {$\ell_1$};
\begin{scriptsize}
\draw [fill=black] (-2.06,0.56) circle (1.5pt);
\draw [fill=black] (-1.6115069903204997,1.7000913864175384) circle (1.5pt);
\draw [fill=black] (-0.6514564188798335,3.5703424603659997) circle (1.5pt);
\draw [fill=black] (-0.002271415995797499,1.2598344017051852) circle (1.5pt);
\draw [fill=black] (1.0167988982366745,1.2417094172217258) circle (1.5pt);
\draw [fill=black] (0.10573843626398416,3.256225036042527) circle (1.5pt);
\draw [fill=black] (-1.5259908420995938,4.06152492091056) circle (1.5pt);
\end{scriptsize}
\end{tikzpicture}
\caption{Flow along a hypercycle increases distance.}
\label{fig:hypercycleFlow}
\end{figure}
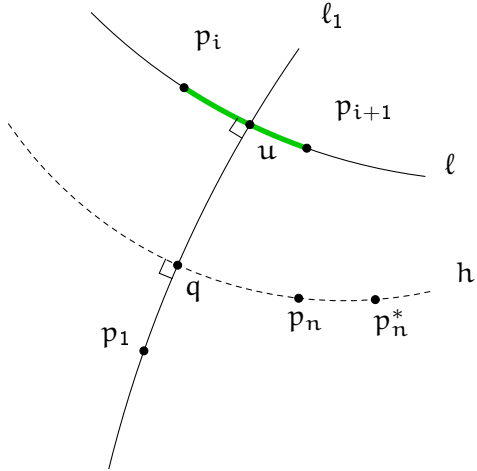
\begin{Lemma}
Fix a point $p$ and a hypercycle $h$\footnote{This includes the case where $h$ is a hyperbolic line.} in the hyperbolic plane $\mathbb{H}^{2}$. Let $m$ be the unique hyperbolic line passing through $p$ and perpendicular to $h$ and let $q$ be the unique point of intersection of $m$ with $h$. Let $r$ and $s$ be points on $h$ in one of the half planes determined by $m$ with $r$ strictly between $q$ and $s$ in the ordering on the hypercycle $h$. Then $|pr| < |ps|$.
\end{Lemma}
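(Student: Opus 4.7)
The plan is to pass to the upper half-plane model of $\mathbb{H}^2$, where both $h$ and the geodesics perpendicular to it admit explicit Euclidean descriptions, and then simply compute. When $h$ is itself a hyperbolic line, the statement is the elementary fact that hyperbolic distance to a line increases monotonically as one moves away from the foot of perpendicular, so I assume $h$ is a proper equidistant curve with axis $\ell$ and apply a hyperbolic isometry placing $\ell$ along the positive $y$-axis. A one-line check, parametrizing a Euclidean ray from the origin and computing its distance to the $y$-axis along the perpendicular geodesics (Euclidean semicircles centered at $0$), identifies the hypercycles equidistant from the $y$-axis as exactly the Euclidean rays from the origin. Hence
$$h=\{(t\cos\alpha,t\sin\alpha):t>0\}$$
for some $\alpha\in(0,\pi)\setminus\{\pi/2\}$; and $m$, being perpendicular to both $\ell$ and $h$, must be a Euclidean semicircle centered at the origin, say $\{x^{2}+y^{2}=t_{0}^{2},\ y>0\}$, meeting $h$ at $q=(t_{0}\cos\alpha,t_{0}\sin\alpha)$.

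Next I would write $p=(t_{0}\cos\beta,t_{0}\sin\beta)$ for some $\beta\in(0,\pi)$ and a generic point of $h$ as $r=(t\cos\alpha,t\sin\alpha)$, and apply the standard upper-half-plane distance formula to obtain
$$\cosh d(p,r)=1+\frac{t_{0}^{2}+t^{2}-2t_{0}t\cos(\beta-\alpha)}{2\,t_{0}t\sin\alpha\sin\beta}.$$
Setting $u=t/t_{0}$, this becomes $1+\bigl(\tfrac{1}{2}(u+u^{-1})-\cos(\beta-\alpha)\bigr)/(\sin\alpha\sin\beta)$. Since $u\mapsto u+u^{-1}$ is strictly increasing on $[1,\infty)$ and strictly decreasing on $(0,1]$, while $\sin\alpha\sin\beta>0$, the quantity $\cosh d(p,r)$ is a strictly monotone function of $|u-1|$. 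The points $r$ and $s$ lie on the same arc of $h\setminus\{q\}$ with $r$ between $q$ and $s$, so their parameters satisfy $|u_{r}-1|<|u_{s}-1|$, and the desired strict inequality $|pr|<|ps|$ drops out.

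The only real subtleties are bookkeeping: verifying that Euclidean rays from the origin are indeed the equidistant curves of the $y$-axis, and noting that because $\alpha,\beta\in(0,\pi)$ the factor $\sin\alpha\sin\beta$ is positive regardless of which side of $\ell$ the point $p$ happens to lie on. A slightly slicker, calculation-free route is available: in the normalized model, every geodesic perpendicular to $h$ is a Euclidean semicircle centered at the origin, so $m$ is the unique geodesic through $p$ perpendicular to $h$ and $q$ is the only critical point of $d(p,\cdot)|_{h}$. Being a strict minimum, it forces $d(p,\cdot)$ to be strictly monotone on each component of $h\setminus\{q\}$, which is exactly what the lemma asks for.
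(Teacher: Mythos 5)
The paper offers no proof of this lemma at all---it is introduced with ``We use the following fact of elementary hyperbolic geometry'' and then applied immediately---so your argument is not being measured against an official one; it supplies a verification the authors left to the reader, and it is correct. The normalization is sound: the equidistant curves of the positive $y$-axis in the upper half-plane are the Euclidean rays from the origin, the geodesics orthogonal to such a ray are exactly the semicircles centered at the origin (which gives both the uniqueness of $m$ and its identification with $\{|z|=t_{0}\}$), and the formula $\cosh d(p,r)=1+\bigl(\tfrac{1}{2}(u+u^{-1})-\cos(\beta-\alpha)\bigr)/(\sin\alpha\sin\beta)$ is right and covers the line case $\alpha=\pi/2$ as well, so the separate appeal at the start is unnecessary. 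One phrase should be tightened: $\cosh d(p,r)$ is \emph{not} a function of $|u-1|$ (compare $u=1/2$ and $u=3/2$, where $u+u^{-1}$ takes different values); what your argument actually uses, and what is true, is that $u\mapsto u+u^{-1}$ is strictly increasing on $[1,\infty)$ and strictly decreasing on $(0,1]$, so that $d(p,\cdot)$ increases strictly as one moves away from $q$ along either component of $h\setminus\{q\}$, and since $r$ and $s$ lie on the same component with $r$ between $q$ and $s$ the conclusion follows. Your closing ``calculation-free'' observation---that $q$ is the unique critical point of $d(p,\cdot)|_{h}$ because the only geodesic from $p$ meeting $h$ orthogonally is $m$, and a proper nonnegative function on a line with a single critical point is strictly monotone on each side of it---is also valid and is arguably the cleanest way to record the fact.
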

Let $\sigma$ be the geodesic arc of shortest length that connects the point $p_{1}$ to the edge $e$. By Lemma~\ref{lem:greenblackcontainment}, $\sigma$ meets $e$ orthogonally, say at the point $u$. Since the polygon formed by adding the edge $p_{1}p_{n}$ to the arm chain $P$ is convex, the line $\ell_{1}$ supporting $\sigma$ meets $P$ only at the vertex $p_{1}$ and the point $u$ of $e$, unless $\sigma = p_{1}p_{2}$ with $u= p_{2}$, in which case $\ell_{1}$ meets $P$ along its first edge $p_{1}p_{2}$. In either case, as the chain of edges from $p_{i+1}$ to $p_{n}$ along $P$ is connected and does not meet $\ell_{1}$, that chain lies in one half-space determined by $\ell_{1}$. Let $h$ be the hypercycle determined by $p_{n}$ and $\ell$, the component containing $p_{n}$ of the set of points whose distance to $\ell$ is equal to the distance from $p_{n}$ to $\ell$. This hypercycle $h$ is exactly the flow line of $p_{n}$ under the hyperbolic flow $T_{\ell}$. Since $\ell_{1}$ is orthogonal to $e$ and therefore to $\ell$, $\ell_{1}$ is orthogonal to all the hypercycles determined by $\ell$, or what is the same, to all the flow lines of $T_{\ell}$. In particular, $\ell_{1}$ is orthogonal to $h$. Let $q$ be the point of intersection of $\ell_{1}$ with $h$. Then since $e$ lengthens to $e^{*}$ when $e \neq e^{*}$ and $p_{i+1}$ and $p_{n}$ are in the same half-plane determined by $\ell_{1}$, $p_{n}$ is strictly between $q$ and $p_{n}^{*}$ in the ordering of $h$; see Fig.~\ref{fig:hypercycleFlow}. Apply the preceding lemma with $p= p_{1}$, $r= p_{n}$ and $s=p_{n}^{*}$ to conclude that $|p_{1}p_{n}| <  |p_{1} p_{n}^{*}|$.

\textit{Case} (I). In this case, the new arm chain $P^{*}$ obtained from lengthening $e$ is convex when the free vertices are joined by a hyperbolic segment, so that it is a green-black arm chain. Moreover, conditions (1)--(3) hold for the two compatible, black-edge congruent, green-black arm chains $P^{*}$ and $P'$ with the additional fact that the lengths of the green edges $e^{*}$ and $e'$ agree. Recolor these green edges, $e^{*}$ and $e'$, black and note that $P^{*}$ and $P'$ with these recolored edges are two compatible, black-edge congruent green-black arm chains that satisfy conditions (1)--(3) and have $N-1$ green edges. Apply the inductive hypothesis for $N$ to conclude that $|p_{1}p_{n}^{*}| \leq |p_{1}'p_{n}'|$ with equality if and only if corresponding angles and edge lengths agree. Applying (\dag), we have $|p_{1} p_{n} | \leq |p_{1}p_{n}^{*}| \leq |p_{1}'p_{n}'|$, again with equality if and only if corresponding angles and edge lengths agree.

\textit{Case} (II). By increasing the size of the edge $e$, we have arrived at a point where $p_{1}$, $p_{2}$ and $p^*_{n}$ are collinear with $p_1$ between $p_2$ and $p^*_n$, so that $|p_1 p^*_n| = |p_2 p^*_n| - |p_1 p_2|$.  Were $e$ to increase its length further, convexity would be violated. Using (\dag) exactly as in the preceding paragraph, we need only verify that (\ddag) $|p_{1} p_{n}^{*}| \leq |p_{1}' p_{n}'|$. We verify (\ddag) by inducting on $M$, the number of black edges between $p_{1}$ and the first green edge encountered in a walk from $p_{1}$ to $p_{n}$ in $P$. The basis of the induction when $M=1$ is a bit more difficult to prove than the inductive step, so we will delay its verification until after we verify the inductive step.

(\ddag) \textit{The inductive step, $M \geq 2$}: Let $M \geq 2$ and assume that the result holds for $M-1$. Since $M$ is at least $2$, the initial two edges, $p_{1}p_{2}$ and $p_{2}p_{3}$, are black.  Consider the chains $p_2, \cdots, p^*_n$ and $p'_2, \cdots, p'_n$ and apply the inductive hypothesis for $M$ to conclude that $|p_2 p^*_n| < |p'_2 p'_n|$. We have 
\begin{equation*}
	|p_1 p^*_n| = |p_2 p^*_n| - |p_1 p_2| \leq |p'_2 p'_n| - |p'_1 p'_2| \leq |p'_1 p'_n|,
\end{equation*}
where the first inequality follows since $|p_{1}p_{2}| = |p_{1}'p_{2}'|$, and the last inequality is the triangle inequality.

(\ddag) \textit{The basis of the induction, $M=1$}: When $M=1$, by our initial requirement that the number of vertices between the initial vertex $p_{1}$ and the nearest green edge is at least as large as the number between the terminal vertex $p_{n}$ and its nearest green edge, both edges $p_2p_3$ and $e = p_{n-2}p_{n-1}$ are green. By Corollary~\ref{cor:relaxedgreenblackcontainment} the shortest path between  $p_2p_3$ and $p_{n-2}p_{n-1}^{*}$ meets both orthogonally. Let $X$ and $Y$ be the endpoints of this path. We obtain Fig.~\ref{fig:actiii} by applying a M\"obius transformation taking $X$ and $Y$ to the respective points $-A$ and $A$ on the $x$-axis, with $p_1$, $p_2$, $p_{n-1}^*$ and $p_n^*$  below the $x$-axis and the remaining vertices of $P$ above the $x$-axis. 

Next, along the lines $k$ and $\ell$ supporting the respective green edges $p_{2}p_{3}$ and $p_{n-2}p_{n-1}^{*}$, stretch both green edges so that their lengths equal the corresponding lengths in $P'$. To do this, flow $p_1p_2$ using the hyperbolic flow $T_{k}$ with axis $k$ to edge $p_{1}^*p_{2}^*$ so that $|p_2^* p_3| = |p_2' p_3'|$. By hypothesis, the edge $p_{1}p_{2}$ meets $k$ orthogonally, and so the edge $p_{1}^*p_{2}^*$ must also meet $k$ orthogonally. Similarly flow $p_{n-1}^*p_n^*$ to $p_{n-1}^{**} p_n^{**}$ using $T_{\ell}$ so that $|p_{n-2} p_{n-1}^{**}| = |p'_{n-2} p'_{n-1}|$.

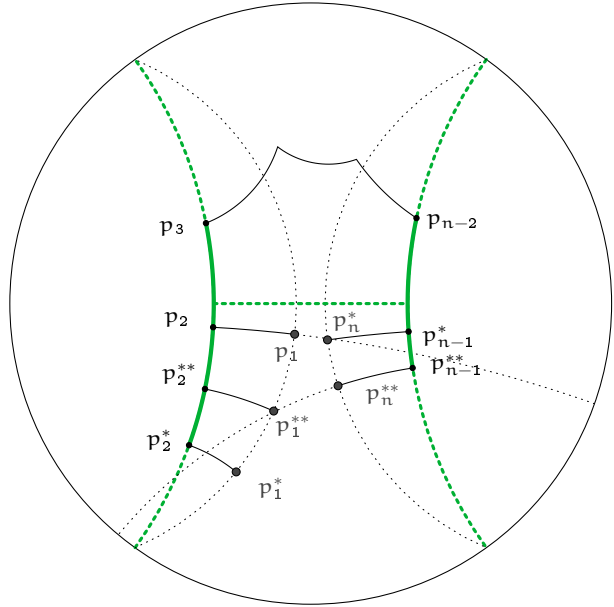
\begin{figure}
\definecolor{greenedge}{rgb}{0.,0.69215686274509803,0.2}
\centering
\definecolor{uuuuuu}{rgb}{0.26666666666666666,0.26666666666666666,0.26666666666666666}
\definecolor{ttffqq}{rgb}{0.,0.69215686274509803,0.2}
\begin{tikzpicture}[line cap=round,line join=round,>=triangle 45,x=4.0cm,y=4.0cm]
\clip(-1.1,-1.1) rectangle (1.1,1.1);
\draw(0.,0.) circle (4.cm);
\draw [shift={(-1.7133103604075548,0.)},line width=1.2pt,dotted,color=ttffqq]  plot[domain=-0.6232354939684139:0.6232354939684144,variable=\t]({1.*1.3911981854070488*cos(\t r)+0.*1.3911981854070488*sin(\t r)},{0.*1.3911981854070488*cos(\t r)+1.*1.3911981854070488*sin(\t r)});
\draw [shift={(1.7133103604075548,0.)},line width=1.2pt,dotted,color=ttffqq]  plot[domain=2.518357159621379:3.7648281475582075,variable=\t]({1.*1.3911981854070488*cos(\t r)+0.*1.3911981854070488*sin(\t r)},{0.*1.3911981854070488*cos(\t r)+1.*1.3911981854070488*sin(\t r)});
\draw [shift={(-0.9303628646300736,0.)},dotted]  plot[domain=-1.1672579846756577:1.167257984675658,variable=\t]({1.*0.8829121177454339*cos(\t r)+0.*0.8829121177454339*sin(\t r)},{0.*0.8829121177454339*cos(\t r)+1.*0.8829121177454339*sin(\t r)});
\draw [shift={(0.9303628646300736,0.)},dotted]  plot[domain=1.9743346689141354:4.308850638265451,variable=\t]({1.*0.882912117745434*cos(\t r)+0.*0.882912117745434*sin(\t r)},{0.*0.882912117745434*cos(\t r)+1.*0.882912117745434*sin(\t r)});
\draw [shift={(-0.5836654135226991,-4.659568838911487)}] plot[domain=1.4549606579456849:1.5142461514325836,variable=\t]({1.*4.588272777363783*cos(\t r)+0.*4.588272777363783*sin(\t r)},{0.*4.588272777363783*cos(\t r)+1.*4.588272777363783*sin(\t r)});
\draw [shift={(-0.5836654135226991,-1.397683746240294)}] plot[domain=1.1539415007234604:1.365194959196559,variable=\t]({1.*1.1376225953482666*cos(\t r)+0.*1.1376225953482666*sin(\t r)},{0.*1.1376225953482666*cos(\t r)+1.*1.1376225953482666*sin(\t r)});
\draw [shift={(-0.5836654135226991,-0.9695902303135197)}] plot[domain=0.8839319288552139:1.225409902477138,variable=\t]({1.*0.5298778440943233*cos(\t r)+0.*0.5298778440943233*sin(\t r)},{0.*0.5298778440943233*cos(\t r)+1.*0.5298778440943233*sin(\t r)});
\draw [shift={(0.5836654135226991,-1.7865142672635885)}] plot[domain=1.7253523966107887:1.8856592596075747,variable=\t]({1.*1.5913197485354662*cos(\t r)+0.*1.5913197485354662*sin(\t r)},{0.*1.5913197485354662*cos(\t r)+1.*1.5913197485354662*sin(\t r)});
\draw [shift={(0.5836654135226991,-3.944004965264104)}] plot[domain=1.6378059401713831:1.708009596069373,variable=\t]({1.*3.859512984946485*cos(\t r)+0.*3.859512984946485*sin(\t r)},{0.*3.859512984946485*cos(\t r)+1.*3.859512984946485*sin(\t r)});
\draw [shift={(0.5836654135226991,-1.7865142672635885)},dotted]  plot[domain=1.8856592596075747:2.4476178472242256,variable=\t]({1.*1.5913197485354662*cos(\t r)+0.*1.5913197485354662*sin(\t r)},{0.*1.5913197485354662*cos(\t r)+1.*1.5913197485354662*sin(\t r)});
\draw [shift={(-0.5836654135226991,-4.659568838911487)},dotted]  plot[domain=1.2315925062128412:1.4549606579456849,variable=\t]({1.*4.588272777363784*cos(\t r)+0.*4.588272777363784*sin(\t r)},{0.*4.588272777363784*cos(\t r)+1.*4.588272777363784*sin(\t r)});
\draw [line width=1.2pt,dotted,color=ttffqq] (-0.32211217500050604,0.)-- (0.32211217500050604,0.);
\draw [shift={(-0.5005721658530542,0.6496815970963868)}] plot[domain=5.092086785990749:5.9655566796008115,variable=\t]({1.*0.41141504759166125*cos(\t r)+0.*0.41141504759166125*sin(\t r)},{0.*0.41141504759166125*cos(\t r)+1.*0.41141504759166125*sin(\t r)});
\draw [shift={(0.05910732233073381,0.7401033212236938)}] plot[domain=4.055404391506428:5.054539823809769,variable=\t]({1.*0.27646165246559895*cos(\t r)+0.*0.27646165246559895*sin(\t r)},{0.*0.27646165246559895*cos(\t r)+1.*0.27646165246559895*sin(\t r)});
\draw [shift={(0.755776587503395,0.8975239148864734)}] plot[domain=3.746873024630914:4.129425647690647,variable=\t]({1.*0.7343806577188957*cos(\t r)+0.*0.7343806577188957*sin(\t r)},{0.*0.7343806577188957*cos(\t r)+1.*0.7343806577188957*sin(\t r)});
\draw [shift={(-1.7133103604075581,0.)},line width=1.6pt,color=ttffqq]  plot[domain=-0.34538642431775823:0.1935357390549599,variable=\t]({1.*1.3911981854070519*cos(\t r)+0.*1.3911981854070519*sin(\t r)},{0.*1.3911981854070519*cos(\t r)+1.*1.3911981854070519*sin(\t r)});
\draw [shift={(1.71331036040756,0.)},line width=1.6pt,color=ttffqq]  plot[domain=2.935685878024264:3.2961487234056843,variable=\t]({1.*1.3911981854070536*cos(\t r)+0.*1.3911981854070536*sin(\t r)},{0.*1.3911981854070536*cos(\t r)+1.*1.3911981854070536*sin(\t r)});
\begin{scriptsize}
\draw [fill=black] (-0.32433605413006195,-0.07863057662781253) circle (1.0pt);
\draw[color=black] (-0.44310608023037346,-0.053280129069811219) node {$p_2$};
\draw [fill=uuuuuu] (-0.05336754074398381,-0.10204415487258145) circle (1.5pt);
\draw[color=uuuuuu] (-0.07764964271487182,-0.1693719368896748) node {$p_1$};
\draw [fill=black] (-0.35141305028708003,-0.2840213156903558) circle (1.0pt);
\draw[color=black] (-0.4360158333607989,-0.23573604626855035) node {$p^{**}_2$};
\draw [fill=black] (-0.40426980825667713,-0.47100448395359273) circle (1.0pt);
\draw[color=black] (-0.5023799427396744,-0.45482837440517704) node {$p^{*}_2$};
\draw [fill=uuuuuu] (-0.24766153422505605,-0.5598684677001787) circle (1.5pt);
\draw[color=uuuuuu] (-0.13119476614965905,-0.6138299353554665) node {$p^{*}_1$};
\draw [fill=uuuuuu] (-0.12305724894724938,-0.3574793008674811) circle (1.5pt);
\draw[color=uuuuuu] (-0.058194245832896105,-0.40810119628095215) node {$p^{**}_1$};
\draw [fill=black] (0.32523444687001724,-0.0931539013503761) circle (1.0pt);
\draw[color=black] (0.45926427294965736,-0.109916540007698774) node {$p^{*}_{n-1}$};
\draw [fill=black] (0.33869530272056525,-0.21416310200419578) circle (1.0pt);
\draw[color=black] (0.4857191495148701,-0.20591758064122502) node {$p^{**}_{n-1}$};
\draw [fill=uuuuuu] (0.09085573460756652,-0.2734256504117719) circle (1.5pt);
\draw[color=uuuuuu] (0.24044424637204326,-0.3019186212747512) node {$p^{**}_n$};
\draw [fill=uuuuuu] (0.055749220350437036,-0.12076746623830655) circle (1.5pt);
\draw[color=uuuuuu] (0.12098884949006751,-0.059901925619626568) node {$p^{*}_{n}$};
\draw [fill=black] (-0.3480853686291794,0.26756889375187126) circle (1.0pt);
\draw[color=black] (-0.4595609567955863,0.25190400688667903) node {$p_3$};
\draw [fill=black] (0.35149985617321833,0.28443723672699334) circle (1.0pt);
\draw[color=black] (0.47271862919810714,0.2718132397003417) node {$p_{n-2}$};
\end{scriptsize}
\end{tikzpicture}
\caption{The construction for the proof of the basis of the induction (\ddag). The dotted green line on the $x$-axis is the shortest path between $p_2 p_3$ and $p_{n-2} p_{n-1}^*$.}
\label{fig:actiii}
\end{figure}

If the resulting polygon $p_1^* p_2^* p_3 \cdots p_{n-2} p_{n-1}^{**} p_n^{**}$ fails to be convex, the line through one of the edges, either $p_{1}^{*} p_{2}^{*}*$ or $p_{n-1}^{**} p_n^{**}$, meets the polygon at a point not on that edge. In this case, we flow the other edge backwards until convexity is restored. The argument is symmetric in the two cases, so to continue we will assume, without loss of generality, that the support line $m$ of $p_{n-1}^{**} p_n^{**}$ meets the polygon at a point not on that edge. Now apply the flow $T_{k}$ in reverse to move the edge $p_{1}^{*} p_{2}^{*}$ back until its image $p_1^{**} p_2^{**}$ under the flow meets $m$ at the point $p_{1}^{**}$; see Fig.~\ref{fig:actiii}. The points  $p_1^{**}, p_2^{**}, p_3, \cdots p_{n-2}, p_{n-1}^{**} p_n^{**}$ determine a green-black arm chain $P^{**}$ that is compatible and black-edge congruent with $P'$. Moreover, by the construction of $P^{**}$, it satisfies conditions (1)--(3) of the lemma and the green edge $e^{**} = p_{n-2}p_{n-1}^{**}$ has the same length as the green edge $e' = p_{n-2}'p_{n-1}'$. As in Case (I), recolor these green edges, $e^{**}$ and $e'$, black and note that $P^{**}$ and $P'$ with these recolored edges are two compatible, black-edge congruent green-black arm chains that satisfy conditions (1)--(3) and have $N-1$ green edges. Apply the inductive hypothesis for $N$ to conclude that $|p_{1}^{**}p_{n}^{**}| \leq |p_{1}'p_{n}'|$ with equality if and only if corresponding angles and edge lengths agree. 

The three inequalities
\begin{equation*}
	|p_{1}p_{n}| \leq |p_{1}p_{n}^{*}|, \quad  |p_{1}p_{n}^{*}| \leq |p_{1}^{**}p_{n}^{**}|, \quad |p_{1}^{**}p_{n}^{**}| \leq |p_{1}'p_{n}'|
\end{equation*}
imply that $|p_{1}p_{n}| \leq |p_{1}'p_{n}'|$, confirming the basis step of induction on $M$. The first inequality is from (\dag), the third is confirmed in the preceding paragraph, and the second is a consequence of the next lemma. That equality holds if and only if corresponding angles and edge lengths agree is straightforward.

This completes the proof of the Green-Black Arm Lemma modulo the verification of the lemma following. In applying the lemma, we set $p_{1} = b$, $p_{n}^{*} = c$, $p_{1}^{**} = B$ and $p_{n}^{**} = C$.

\begin{Lemma}\label{RegionRflow}
	Let $R$ be the open convex region in $\mathbb{H}^{2}$ bounded by the hyperbolic rays $k$ and $\ell$ and the hyperbolic segment $m$, with $m$ orthogonal to both $k$ and $\ell$. Let $c\in R$ and let $a$ be that unique point on $k$ for which the segment $ac$ is orthogonal to $k$. Choose any point $b$ in $ac$ other than one of the endpoints. Let $B$ and $C$ be points in $R$ such that the hyperbolic distance from $B$ to $k$ is equal to that from $b$ to $k$, and the hyperbolic distance from $C$ to $\ell$ is equal to that from $c$ to $\ell$. Assume further that $B$ and $C$ lie in the half-plane bordered by the line through $b$ and $c$ that does not meet $m$. Then $|bc| \leq |BC|$, with equality if and only if $b=B$ and $c=C$. See Fig.~\ref{fig:RegionR}.
\end{Lemma}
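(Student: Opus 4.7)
The plan is to prove the inequality by working in $\ell$-based Fermi coordinates $(\sigma,\rho)$, where $\ell=\{\rho=0\}$ and $\rho$ is signed perpendicular distance. In these coordinates $c$ has coordinates $(v_c,-r_\ell)$ with $r_\ell=d(c,\ell)$, and a general point $P\in R$ has coordinates $(\sigma_P,-\bar\rho_P)$ where $\bar\rho_P=d(P,\ell)$. The standard Fermi-coordinate distance formula (a consequence of $\cosh|PQ|=-\langle P,Q\rangle$ in the hyperboloid model) then gives
\[
\cosh|BC|\;=\;\cosh\bar\rho_B\,\cosh r_\ell\,\cosh(\sigma_B-v_C)\;-\;\sinh\bar\rho_B\,\sinh r_\ell.
\]
The half-plane hypothesis, translated through the common perpendicular $m$, becomes $s_B\ge s_a$ (where $s_B$ is the $k$-Fermi parameter of $B$, measured from the foot of $m$ on $k$) and $v_C\le v_c$.

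The proof then reduces to showing that the right-hand side, viewed as a function of the two free parameters $s_B\in[s_a,\infty)\cap R$ and $v_C\in(-\infty,v_c]$, is minimized at $(s_B,v_C)=(s_a,v_c)$, where it equals $\cosh|bc|=\cosh(r_c-r_b)$. I will minimize $v_C$ first for fixed $s_B$, splitting into two cases by sign of $\sigma_B-v_c$. If $\sigma_B>v_c$, the minimum in $v_C$ is attained at the boundary $v_C=v_c$, so $\cosh|BC|\ge\cosh|Bc|\ge\cosh|bc|$, where the second inequality is the already-easy step: $b$ is the foot of perpendicular from $c$ to the $k$-hypercycle $h_b$ (since $ac\perp k$ and $b\in ac$), so the hypercycle monotonicity lemma stated just above gives $|bc|\le|Bc|$. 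If $\sigma_B\le v_c$, the minimum is attained at the interior point $v_C=\sigma_B$, giving $\cosh(\bar\rho_B-r_\ell)$, and the lemma reduces to the inequality $|\bar\rho_B-r_\ell|\ge|r_c-r_b|$.

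This last inequality is the core of the matter. It is a Lambert-quadrilateral identity: the quadrilateral with vertices $a$, the foot $a'$ of $B$ on $k$, the foot $d_B$ of $B$ on $\ell$, and the foot $d$ of $c$ on $\ell$ has right angles at all four feet (from $m$), and expanding the three remaining hyperbolic distances in the Fermi coordinates of $B$ (i.e.\ in terms of $s_B-s_a$, $r_b$, $r_c$, $D$) lets one factor the difference $\cosh(\bar\rho_B-r_\ell)-\cosh(r_c-r_b)$ as a nonnegative combination of expressions of the form $\cosh(\cdot)-1$. Strictness of each such term then gives the equality case $B=b$, and combined with the strictness in the $v_C$ reduction gives $C=c$.

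The main obstacle is carrying out the algebraic factorization in the second case; in particular, one must verify that the Lambert identity is compatible with the far-direction hypothesis, so that the nonnegative combination uses the correct signs. The role of the common perpendicular $m$ is essential here---without it, cross terms in the Fermi-coordinate expansion do not cancel, and indeed the lemma would fail if $B$ and $C$ were placed in the near half-plane.
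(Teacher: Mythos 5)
Your proposal is a plan rather than a proof, and the step you yourself flag as ``the main obstacle'' is a genuine gap, not a deferred computation. In your second case you reduce the lemma to $|\bar\rho_B-r_\ell|\ge|r_c-r_b|$, that is, $|d(B,\ell)-d(c,\ell)|\ge|bc|$. No factorization of the kind you describe---a nonnegative combination of $\cosh(\cdot)-1$ terms in the parameters $s_B-s_a$, $r_b$, $r_c$, $D$---can establish this, because the inequality is \emph{false} at the parameter value $s_B=s_a$ (i.e.\ $B=b$): the function $d(\cdot,\ell)$ is $1$-Lipschitz, so $|d(b,\ell)-d(c,\ell)|\le|bc|$ with equality only if the geodesic through $b$ and $c$ is orthogonal to $\ell$, which would make it the common perpendicular $m$ and is excluded. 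So the Case 2 inequality can hold only on the locus where the case condition $\sigma_B\le v_C$ is actually satisfied, and any proof must consume that transcendental constraint; you give no indication of how. There is a second, independent gap in your translation of the half-plane hypothesis into $v_C\le v_c$: the boundary of that half-plane is the geodesic through $a$, $b$, $c$, which is perpendicular to $k$ but not to $\ell$, hence is \emph{not} a level set of the $\ell$-Fermi coordinate $v$. To confine $v_C$ to a half-line you must show that this geodesic meets the hypercycle $\{d(\cdot,\ell)=r_\ell\}$ only once inside $R$ and identify the correct side; a geodesic can meet a hypercycle in two points, so this requires an argument you have not supplied. Your first case is sound---it is exactly the hypercycle monotonicity lemma applied to $c$ and the equidistant curve of $k$ through $b$---but the architecture around it does not close.

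For contrast, the paper's proof is coordinate-free and short: it introduces the hypercycle $k_c$ of $k$ through $c$, lets $q$ be the point where the segment $BC$ crosses $k_c$ and $p$ the foot of $q$ on $k$, and computes $|ab|+|bc|=|ac|=|pq|\le|AB|+|Bq|$, the equality because $ac$ and $pq$ are both perpendicular segments from $k$ to the equidistant curve $k_c$, and the inequality because $|pq|=d(q,k)\le|qB|+|BA|$. Since $|ab|=|AB|$, this yields $|bc|\le|Bq|\le|Bq|+|qC|=|BC|$. The half-plane hypothesis is used only to guarantee that $q$ exists on the segment $BC$, which is a far lighter burden than the coordinate translation your approach requires.
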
 
\begin{proof}
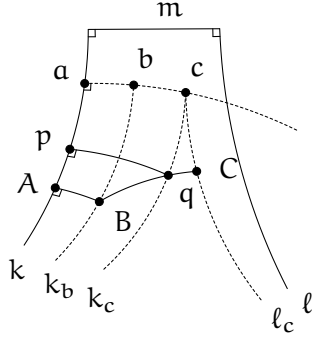
\begin{figure}
\begin{tikzpicture}[line cap=round,line join=round,>=triangle 45,x=1.0cm,y=1.0cm]
\clip(-3.,-0.75) rectangle (1.5,4.1);
\draw (-1.5470344343126554,3.5507329384306305)-- (0.2,3.56);
\draw [shift={(-6.742680297805866,3.532402456953445)}] plot[domain=-0.5811616127601003:0.003528032107170221,variable=\t]({1.*5.195678198790347*cos(\t r)+0.*5.195678198790347*sin(\t r)},{0.*5.195678198790347*cos(\t r)+1.*5.195678198790347*sin(\t r)});
\draw [shift={(8.458128342245985,3.8610160427807476)}] plot[domain=3.178027402461839:3.6141105892300374,variable=\t]({1.*8.26361267092171*cos(\t r)+0.*8.26361267092171*sin(\t r)},{0.*8.26361267092171*cos(\t r)+1.*8.26361267092171*sin(\t r)});
\draw [shift={(-1.4675964094134175,-3.3008717654795223)},dash pattern=on 1pt off 1pt]  plot[domain=1.1166298129769001:1.5914788879154662,variable=\t]({1.*6.1339882302487*cos(\t r)+0.*6.1339882302487*sin(\t r)},{0.*6.1339882302487*cos(\t r)+1.*6.1339882302487*sin(\t r)});
\draw [shift={(-4.628211852533284,3.0548241980375703)},dash pattern=on 1pt off 1pt]  plot[domain=5.511836984691178:6.2170205518231025,variable=\t]({1.*3.6936087592891576*cos(\t r)+0.*3.6936087592891576*sin(\t r)},{0.*3.6936087592891576*cos(\t r)+1.*3.6936087592891576*sin(\t r)});
\draw [shift={(-3.62737783549824,2.845541635127112)},dash pattern=on 1pt off 1pt]  plot[domain=5.456292612973673:6.243490869537996,variable=\t]({1.*3.3778713093128596*cos(\t r)+0.*3.3778713093128596*sin(\t r)},{0.*3.3778713093128596*cos(\t r)+1.*3.3778713093128596*sin(\t r)});
\draw [shift={(4.340361388512763,2.8181428829212747)},dash pattern=on 1pt off 1pt]  plot[domain=3.1648107068472267:3.8186885234406005,variable=\t]({1.*4.593766877722838*cos(\t r)+0.*4.593766877722838*sin(\t r)},{0.*4.593766877722838*cos(\t r)+1.*4.593766877722838*sin(\t r)});
\draw [shift={(-2.7199292423876806,-1.9349055026345947)}] plot[domain=1.1790176832152184:1.3567435846557645,variable=\t]({1.*3.4568243194693786*cos(\t r)+0.*3.4568243194693786*sin(\t r)},{0.*3.4568243194693786*cos(\t r)+1.*3.4568243194693786*sin(\t r)});
\draw [shift={(0.12363778871669442,-1.3534295525964528)}] plot[domain=1.6467123822461038:2.0986244217535197,variable=\t]({1.*3.025142333443171*cos(\t r)+0.*3.025142333443171*sin(\t r)},{0.*3.025142333443171*cos(\t r)+1.*3.025142333443171*sin(\t r)});
\draw [shift={(-2.203030323973693,-2.2352425049597886)}] plot[domain=1.1503064270210954:1.4729713032711869,variable=\t]({1.*4.212340354631171*cos(\t r)+0.*4.212340354631171*sin(\t r)},{0.*4.212340354631171*cos(\t r)+1.*4.212340354631171*sin(\t r)});
\draw (-1.4564875523492489,3.551213240085589)-- (-1.4540887542362542,3.4530561255820897);
\draw (-1.5476091481179175,3.4529814580599876)-- (-1.4540887542362542,3.4530561255820897);
\draw (-1.5116679149299115,2.832958140152935)-- (-1.516491627882574,2.7426234102239886);
\draw (-1.6070812240096872,2.744561039783)-- (-1.516491627882574,2.7426234102239886);
\draw (-1.7053145715600084,1.9475903022479701)-- (-1.732501575119836,1.8497822949766418);
\draw (-1.8220300219332772,1.8643878263400584)-- (-1.732501575119836,1.8497822949766418);
\draw (0.10839140360827071,3.559514066531047)-- (0.11078330797146572,3.455456236106948);
\draw (0.20436186569753723,3.457737819189555)-- (0.11078330797146572,3.455456236106948);
\draw (-0.7798334955123575,4.035953753375385) node[anchor=north west] {$m$};
\draw (-2.1669386599306195,3.198907533467816) node[anchor=north west] {$a$};
\draw (-1.030947361484629,3.43806359629855) node[anchor=north west] {$b$};
\draw (-0.3254369761339614,3.2467387460339627) node[anchor=north west] {$c$};
\draw (-2.3821791164782806,2.337945707277173) node[anchor=north west] {$p$};
\draw (-2.6332929824505524,1.7759289596249483) node[anchor=north west] {$A$};
\draw (-1.3418502431645842,1.2737012276804067) node[anchor=north west] {$B$};
\draw (-0.49284622011547574,1.5606885030772877) node[anchor=north west] {$q$};
\draw (0.045254921253677555,1.9792116130310722) node[anchor=north west] {$C$};
\draw (-2.7409132107243828,0.6279798580374247) node[anchor=north west] {$k$};
\draw (-2.27455888820445,0.44861281091437416) node[anchor=north west] {$k_b$};
\draw (-1.7005843374106866,0.2572879606497869) node[anchor=north west] {$k_c$};
\draw (0.7507653066043453,-0.017741511605557286) node[anchor=north west] {$\ell_c$};
\draw (1.1573306134165944,0.17358333865902997) node[anchor=north west] {$\ell$};
\draw (-2.025793390437933,1.3538594005769427)-- (-1.9329693681532332,1.330062000248358);
\draw (-1.9329693681532332,1.330062000248358)-- (-1.8979482294299626,1.422769524822937);
\begin{scriptsize}
\draw [fill=black] (-1.5944539511737768,2.8318045485719616) circle (1.5pt);
\draw [fill=black] (-1.791614952409546,1.9569584271634552) circle (1.5pt);
\draw [fill=black] (-1.9856241072640834,1.4430273481198492) circle (1.5pt);
\draw [fill=black] (-0.942685037956081,2.8106157507430507) circle (1.5pt);
\draw [fill=black] (-0.25216734550919595,2.711494141503059) circle (1.5pt);
\draw [fill=black] (-1.4,1.26) circle (1.5pt);
\draw [fill=black] (-0.483520445932915,1.61015695966992) circle (1.5pt);
\draw [fill=black] (-0.10579855307615471,1.662999644758378) circle (1.5pt);
\end{scriptsize}
\end{tikzpicture}
\caption{The proof of Lemma~\ref{RegionRflow}.}
\label{fig:RegionR}
\end{figure}
	The proof uses the labeling of Fig.~\ref{fig:RegionR}. The dotted lines labeled $k_{b}$ and $k_{c}$ are the hypercycles determined by the line $k$ and the respective points $b$ and $c$; similarly, $\ell_{c}$ is the hypercycle determined by $\ell$ and $c$. The point $B$ lies on $k_{b}$ and the point $C$ lies on $\ell_{c}$. Let $A$ be the unique point on $k$ for which $AB$ is orthogonal to $k$ and let $q$ be the point of intersection of the hypercycle $k_{c}$ with the segment $BC$. Finally, let $p$ be the unique point on $k$ for which $pq$ is orthogonal to $k$. Then $|ab| + |bc| = |ac|  = |pq| \leq |AB| + |Bq|$. Since $|ab| = |AB|$, we conclude that $|bc| \leq |Bq| \leq |Bq| + |qC| = |BC|$. 
	
	The verification that $|bc| = |BC|$ implies $b=B$ and $c=C$ is left as an easy exercise. 
\end{proof}

\section{The Proof of the Main Theorem}\label{Section:Proof}

The remainder of the argument is Cauchy's. Let $G(\mathcal{C})$ and $G(\mathcal{C}')$ be two proper, convex, non-unitary \textit{c}-polyhedra, both based on the same abstract spherical polyhedron $P$ with $1$-skeleton $G=P^{(1)}$, that have M\"obius-congruent \textit{c}-faces. Assume that $G(\mathcal{C})$ and $G(\mathcal{C}')$ are not M\"obius-congruent to one another. Label the edge $e$ of $P$ that is adjacent to the faces $f$ and $g$ as follows. When the oriented ortho-circles $O_{f}^{+}$ and $O_{g}^{+}$ meet at an angle larger than the angle of intersection of ${O_{f}'}^{+}$ and ${O_{g}'}^{+}$, label with a plus sign, a minus sign if smaller, and no sign if equal; or if ${O_{f}'}^{+}$ and ${O_{g}'}^{+}$ fail to meet, then a plus sign if $\langle  {O_{f}}^{+} , {O_{g}}^{+} \rangle > \langle  {O_{f}'}^{+} , {O_{g}'}^{+} \rangle$, a minus sign if $\langle  {O_{f}}^{+} , {O_{g}}^{+} \rangle < \langle  {O_{f}'}^{+} , {O_{g}'}^{+} \rangle$, and no sign otherwise.\footnote{Notice that in terms of the inversive distance, when $O_{f}^{+}$ and $O_{g}^{+}$ meet, $e$ gets a plus sign when  $\langle  {O_{f}}^{+} , {O_{g}}^{+} \rangle < \langle  {O_{f}'}^{+} , {O_{g}'}^{+} \rangle$.} By Theorem~\ref{Thm:Congruence ConditionI}, there is at least one edge of $P$ labeled with a plus or a minus sign. Apply Cauchy's Combinatorial Lemma~\ref{lem:cauchycombinatorial} to conclude that there is a vertex $v$ that is incident to at least one edge labeled with a plus or minus sign for which one encounters at most two sign changes in labels on the edges adjacent to $v$ as one walks around the vertex.

Consider now the green-black polygon $L(v)$, the \textit{c}-link of circle $C_{v}$ in $G(\mathcal{C})$. Label a green vertex or a green edge of $L(v)$ with a plus sign if the interior angle or edge-length is larger than the corresponding one in $L(v)'$, a minus sign if it is smaller, and no label if equal. By Remark~\ref{rem:complexangleL(v)} the green vertex or edge of $L(v)$ determined by the edge $e$ of $P$ has the same sign that the preceding paragraph assigns to the edge $e$, and so there are at most two sign changes on a walk around the \textit{c}-link $L(v)$. But this contradicts the Green-Black Polygon Four Vertex Lemma~\ref{lem:greenblackfourvertex}, which guarantees at least four sign changes. Therefore, the \textit{c}-polyhedra $G(\mathcal{C})$ and $G(\mathcal{C}')$ are M\"obius-congruent, verifying the Main Theorem.

\section{Hyperideal Polyhedra in $\mathbb{H}^{3}$}\label{Section:Hyperideal}
Convex hyperbolic polyhedra have been a topic of interest since Poincar\'e's 1881 study~\cite{Poincare:1881} and Dehn's 1905 paper~\cite{Dehn:1905}. Andre'ev in two 1970 papers classified all compact convex hyperbolic polyhedra with acute dihedral angles~\cite{Andreev:1970a} and studied non-compact ones with ideal vertices~\cite{Andreev:1970b}. Thurston~\cite{Thurston:1980} in the Princeton notes for his 1978-79 course on hyperbolic $3$-manifolds recovered and popularized Andre'ev's results in the context of circle packings on the Riemann sphere. In 1993, Hodsgen and Rivin characterized all compact ones~\cite{HodgsonRivin:1993}, even with dihedral angles greater than $\pi/2$, and in 1996 Rivin~\cite{Rivin:1996} characterized the ones with ideal vertices. Bao and Bonahon~\cite{BaoBonahon:2002} in 2002 characterized convex hyperideal hyperbolic polyhedra in terms of their combinatorial type and their dihedral angles. The results of Andre'ev, Thurston, Hodsgen-Rivin, Rivin, and Bao-Bonahon give conditions that guarantee existence of convex polyhedra in $\mathbb{H}^{3}$ as well as their global rigidity---uniqueness up to hyperbolic isometries.

Our Main Theorem implies nothing about existence, but does imply the global rigidity of certain generalized convex hyperideal polyhedra in $\mathbb{H}^{3}$. We first recall the Bao-Bonahon definition of hyperideal polyhedron. We use the Klein projective model of $\mathbb{H}^{3}$ where the hyperbolic $3$-space is identified with the open unit ball $B^{3}$ in $\mathbb{E}^{3} \subset \mathbb{RP}^{3}$. In this model, the hyperbolic lines and totally geodesic hyperbolic planes are the intersections of Euclidean lines and planes of $\mathbb{E}^{3}$ with $\mathbb{H}^{3} = B^{3}$. A \textit{hyperideal polyhedron} $P$ is the intersection of $\mathbb{H}^{3}$ with a compact convex projective polyhedron $P'$ of $\mathbb{RP}^{3}$ with two properties: ($1$) no vertex of $P'$ is in $\mathbb{H}^{3}$, and ($2$) every edge of $P'$ meets $\mathbb{H}^{3}$ nontrivially. Our interest is in a generalization of hyperideal polyhedra to regions $P$ obtained by intersecting $\mathbb{H}^{3}$ with a compact convex projective polyhedron $P'$ of $\mathbb{RP}^{3}$ with the two properties: ($1'$) no vertex of $P'$ is in the closed unit ball $\mathbb{H}^{3}\cup \partial \mathbb{H}^{3} = B^{3} \cup \mathbb{S}^{2}$ and ($2'$) every face of $P'$ meets $\mathbb{H}^{3}$ nontrivially. In particular, we allow that some or all of the edges of $P'$ lie outside the closed unit ball. The terminology \textit{hyperideal polyhedron} suggests a polyhedron whose vertices either lie at infinity (=\textit{ideal}), or lie beyond (=\textit{hyper}) infinity, per condition ($1$). The edges and faces do not lie, at least not entirely, beyond infinity per condition ($2$). We will use the term \textit{strictly-hyperideal polyhedron}\footnote{Bao and Bonahon~\cite{BaoBonahon:2002} use this term to indicate that conditions ($1'$) and ($2$) adhere. We loosen the term by allowing full edges to lie beyond infinity while still requiring condition ($2'$).} to mean those regions $P$ obtained by intersecting $\mathbb{H}^{3}$ with compact convex projective polyhedra $P'$ that satisfy properties ($1'$) and ($2'$). All vertices lie beyond infinity per condition ($1'$), and the edges may or may not lie entirely beyond infinity, but the faces do not lie, at least not entirely, beyond infinity per condition ($2'$). If in addition to conditions ($1'$) and ($2'$), $P'$ satisfies condition ($3'$) no edge of $P'$ is tangent to $\partial \mathbb{H}^{3} = \mathbb{S}^{2}$, we say that the strictly-hyperideal polyhedron $P$ is \textit{non-unitary}.

Let $P$ be a strictly-hyperideal polyhedron obtained from $P'$ and let $G$ be the $1$-skeleton of the polyhedron $P^{*}$ dual to $P'$. We describe a \textit{c}-framework $G(\mathcal{C})$ as follows. Let $u$ be a vertex of $G$. Then $u$ is a face of $P'$ and as such has a support plane $\Pi_{u}$. Define the oriented circle $C_{u}$ by $C_{u} = \Pi_{u} \cap \mathbb{S}^{2}$, oriented so that its companion disk $D_{u}$ is the disk on $\mathbb{S}^{2}$ it bounds that does not meet the interior of $P'$.\footnote{Since all the vertices of $P'$ lie exterior to the closed unit ball exactly one of the two complementary domains of $C_{u}$ in $\mathbb{S}^{2}$ meets the interior of $P'$. Another way to describe the orientation on $C_{u}$ is using the unit normal vector $\mathbf{n}_{u}$ in $\mathbb{E}^{3}$ to the plane $\Pi_{u} \cap \mathbb{E}^{3}$ that points exterior to $P'$. The circle $C_{u}$ is oriented so that $\mathbf{n}_{u}$ points toward $D_{u}$, or what is the same, so that $C_{u}$ is oriented counter-clockwiase looking from the tip of $\mathbf{n}_{u}$.}  Let $\mathcal{C} = \{C_{u} : u \in V(G) \}$. 
\begin{Lemma}
	The \textit{c}-framework $G(\mathcal{C})$ is a convex \textit{c}-polyhedron, and is non-unitary if $P$ is non-unitary.
\end{Lemma}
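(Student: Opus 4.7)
\textit{Plan.} The argument exploits projective polarity with respect to $\mathbb{S}^{2}=\partial B^{3}$ to identify the ortho-circles and to control orientations. For each vertex $v\in V(P')$ (giving a face $f_{v}$ of $G$), I would set $O_{f_{v}}:=v^{*}\cap\mathbb{S}^{2}$, where $v^{*}$ is the polar plane of $v$ with respect to $\mathbb{S}^{2}$. Since $v$ lies strictly outside $\bar{B}^{3}$ by hypothesis $(1')$, the plane $v^{*}$ meets $B^{3}$ transversely, so $v^{*}\cap\mathbb{S}^{2}$ is a genuine circle. The polarity identity $v\in\Pi\iff\Pi^{*}\in v^{*}$ combined with the classical fact that two circles $\Pi\cap\mathbb{S}^{2}$ and $\Pi'\cap\mathbb{S}^{2}$ are orthogonal precisely when $\Pi^{*}\in\Pi'$ yields $C_{u}\perp v^{*}\cap\mathbb{S}^{2}$ exactly when the face $u$ of $P'$ is incident to $v$. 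Hence $\mathcal{C}_{f_{v}}$ is \textit{c}-planar with ortho-circle $O_{f_{v}}$.

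For the remaining \textit{c}-polyhedron axioms, three consecutive faces around $v$ have support planes that meet pairwise in three distinct edge-lines of $P'$ through $v$, ruling out a common axis, so the associated circles are not coaxial. For edge-uncoupling, let $uw$ be an edge of $G$ coming from an edge $e$ of $P'$, with support line $\ell_{e}=\Pi_{u}\cap\Pi_{w}$. If $\ell_{e}$ meets $\operatorname{int}B^{3}$, then $C_{u}$ and $C_{w}$ intersect in two points. Otherwise $\ell_{e}$ lies entirely outside $\bar{B}^{3}$, and I would project $\mathbb{E}^{3}$ along $\ell_{e}$: $B^{3}$ maps to a disk and $\ell_{e}$ to a point outside it. Because $\operatorname{int}(P')\cap B^{3}\neq\emptyset$ by $(2')$ and lies in the wedge $H_{u}\cap H_{w}$, the whole of $B^{3}$—being connected and unable to cross $\ell_{e}$—sits in that same wedge; so the opposite wedge $H_{u}^{c}\cap H_{w}^{c}$ misses $\mathbb{S}^{2}$ and $D_{u},D_{w}$ have disjoint interiors.

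For convexity, I would orient $O_{f_{v}}$ so that, under the identification of oriented circles on $\mathbb{S}^{2}$ with points outside $\bar{B}^{3}$ (equivalently, points of de Sitter space), $O_{f_{v}}^{+}$ corresponds to $v$ while $C_{u}$ corresponds to the pole $\Pi_{u}^{*}$ with the sign dictated by $D_{u}$ lying on the non-$P'$ side. In this identification, $\langle C_{u},O_{f_{v}}^{+}\rangle$ is proportional, with a positive factor, to the linear functional defining $\Pi_{u}$ evaluated at $v$. For $u$ incident to $v$ this vanishes, giving $C_{u}\perp O_{f_{v}}^{+}$; for $u$ not incident to $v$, the vertex $v\in P'$ lies strictly on the $P'$-side of $\Pi_{u}$, and the sign convention forces $\langle C_{u},O_{f_{v}}^{+}\rangle\geq 0$. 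Hence every $C_{u}$ is segregated from $O_{f_{v}}^{+}$, establishing convexity at each face of $G$. If $P$ is non-unitary (hypothesis $(3')$), no edge of $P'$ is tangent to $\mathbb{S}^{2}$, so no two adjacent circles are tangent, and $\langle C_{u},C_{w}\rangle\neq\pm 1$ on every edge of $G$.

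\textit{Main obstacle.} The delicate step is the sign bookkeeping inside the convexity argument: carefully pairing the orientation of $O_{f_{v}}$ with the conventions for oriented circles, their companion disks, and the de Sitter/projective duality so that $\operatorname{sgn}\langle C_{u},O_{f_{v}}^{+}\rangle$ really equals $\operatorname{sgn}\Pi_{u}(v)$. Once those conventions are pinned down, convexity is uniform because every vertex of $P'$ lies on a single, consistent side of each of its support planes.
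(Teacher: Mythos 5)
Your construction of the ortho-circle is the same as the paper's, just phrased dually: the polar plane $v^{*}$ cuts $\mathbb{S}^{2}$ exactly in the tangency circle of the cone from $v$ circumscribing $\mathbb{S}^{2}$, which is the circle the paper uses, and the orthogonality $C_{u}\perp O_{f_{v}}$ for $u$ incident to $v$ is the same fact seen through polarity rather than through the cone. Where you genuinely diverge is the convexity step: the paper normalizes by a projective transformation sending the ortho-circle to the equator and the vertex $f$ to infinity, and then derives a contradiction (boundedness of $P'$) from the existence of two circles whose disks favor opposite hemispheres; you instead compute the sign of $\langle C_{u},O_{f_{v}}^{+}\rangle$ via the de Sitter/Minkowski pairing and read it off as the sign of the support functional of $\Pi_{u}$ at $v$. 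Your route is more local and computational, and it correctly yields $\langle C_{u},O_{f_{v}}^{+}\rangle\geq 0$ for all $u$ with a single consistent choice of orientation. One caveat: $\langle C_{u},O_{f_{v}}^{+}\rangle\geq 0$ is necessary but not by itself sufficient for segregation. When the two circles are disjoint the inversive distance exceeds $1$ in both the separated configuration and in the coupled configuration where one circle lies inside the other's companion disk (disks covering the sphere), so you still owe an argument that $C_{u}$ and $O_{f_{v}}^{+}$ are uncoupled; the paper's global boundedness contradiction sidesteps this by working with all faces at once.

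The more serious flaw is in your edge-uncoupling argument. The assertion that ``the whole of $B^{3}$, being connected and unable to cross $\ell_{e}$, sits in the wedge $H_{u}\cap H_{w}$'' is false: both faces meet the open ball by $(2')$, so both circles $C_{u}$ and $C_{w}$ are nonempty and $B^{3}$ crosses both support planes; connectedness plus avoidance of the line $\ell_{e}$ does not confine a set to one of the four wedges. The conclusion you want (that the opposite open wedge $\operatorname{int}H_{u}^{c}\cap\operatorname{int}H_{w}^{c}$ misses $\bar{B}^{3}$) is nevertheless true, but it needs a different argument: in the projection along $\ell_{e}$, the ball projects to a convex disk $\delta$ avoiding the apex $p$, each of the two boundary rays of the projected wedge of $H_{u}\cap H_{w}$ meets $\delta^{\circ}$ (because the corresponding face meets $B^{3}$), and if $\delta$ also met the open opposite sector then the triangle spanned by those three points would contain $p$, contradicting $p\notin\delta$. (The paper gives no argument at all here, stating only that edge-uncoupling ``is a consequence of the convexity of $P'$,'' so you are not worse off than the source---but as written your justification does not hold up.)
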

\begin{proof}
	Let $f$ be a face of $P^{*}$ with vertices $u_{1}, \dots ,u_{n}$. Then $u_{1}, \dots ,u_{n}$ are the faces of $P'$ that are incident at the vertex $f$ of $P'$ with respective support planes $\Pi_{u_{i}}$, for $i = 1, \dots ,n$. Since $P'$ is strictly-hyperideal, the vertex $f$ of $P'$ lies outside the closed ball $B^{3}\cup \mathbb{S}^{2}$. Let $O$ be the circle of intersection of $\mathbb{S}^{2}$ with the cone with vertex $f$ that circumscribes $\mathbb{S}^{2}$. Since each support plane $\Pi_{u_{i}}$ passes through $f$, the circle $C_{u_{i}}= \Pi_{u_{i}} \cap \mathbb{S}^{2}$ meets $O$ orthogonally. It follows that the \textit{c}-face $\mathcal{C}_{f} = \{ C_{u_{i}}: i = 1, \dots ,n \}$ is \textit{c}-planar with ortho-circle $O$. That $\mathcal{C}_{f}$ is not coaxial is a consequence of the fact that the vertices $u_{1}, \dots, u_{n}$ are not collinear and the fact that the \textit{c}-framework $G(\mathcal{C)}$ is edge-uncoupled is a consequence of the convexity of $P'$. It follows that $G(\mathcal{C})$ is a \textit{c}-polyhedron, and it remains to prove that it is convex. 
	
	To see that $G(\mathcal{C})$ is convex, recall that as $P'$ is a convex projective polyhedron, it is the closure of one of the components of the complement of the support planes of $P'$ that does not contain a projective line. Normalize $P'$ by applying, if needed, a projective transformation set-wise fixing the $2$-sphere $\mathbb{S}^{2}$ and moving the ortho-circle $O$ to the equator of $\mathbb{S}^{2}$, which moves the vertex $f$ of $P'$ to infinity, by which we mean that $f$ lies in the complement of $\mathbb{E}^{3}$ in $\mathbb{RP}^{3}$. Of course such a projective transformation acts as a M\"obius transformation of $\mathbb{S}^{2}$. Let $P''$ be the closure of the component of the complement of the $n$ support planes $\Pi_{u_{1}}, \dots , \Pi_{u_{n}}$ that contains $P'$, and note that our normalization implies that there is a projective line containing the vertex $f$ of $P'$ that runs parallel to the $z$ axis and is contained in $P''$. We need to show that $O$ may be oriented to $O^{+}$ so that all the oriented circles of the \textit{c}-framework $G(\mathcal{C})$ are segregated from $O^{+}$. If this is not possible, then there are faces $v$ and $w$ of $P'$ whose corresponding circles $C_{v}$ and $C_{w}$ are not both segregated from $O$ no matter which orientation is assigned to $O$. This means, without loss of generality, that the companion disk $D_{v}$ of $C_{v}$ overlaps the northern hemisphere of $\mathbb{S}^{2}$ more than the southern, and that the companion disk $D_{w}$ of $C_{w}$ overlaps the southern more than the northern. From the definition of the circles $C_{v}$ and $C_{w}$, since these disks $D_{v}$ and $D_{w}$ do not meet the interior of $P'$, it follows that $P'$ is contained in a region $Q$, bounded in $\mathbb{E}^{3}$, cut out by the planes $\Pi_{u_{1}}, \dots, \Pi_{u_{n}}, \Pi_{v}, \Pi_{w}$. But then $f$, which lies at infinity, cannot lie in $P'$ since  $Q$ is bounded in $\mathbb{E}^{3}$, a contradiction.
\end{proof}

Not all strictly-hyperideal polyhedra are rigid. The work of the present paper guarantees that one ingredient that ensures rigidity is that of properness. The strictly-hyperideal polyhedron $P = P'\cap \mathbb{H}^{3}$ is \textit{proper} provided its corresponding \textit{c}-polyhedron $G(\mathcal{C})$ is proper. Recall from the introduction that the \textit{rigidity} of $P$ means that it is determined up to isometries of $\mathbb{H}^{3}$ by the combinatorics of $P'$ and the hyperbolic isometry classes of the faces of $P$, which are determined by the faces of the dual polyhedron $P^{*}$. This means that combinatorics and hyperbolic equivalence of faces determines the complex angles in which adjacent faces meet, which in turn determines how $P'$ meets $\mathbb{H}^{3}$ up to projective equivalence that fixes $\mathbb{S}^{2}$.

\begin{Theorem}\label{RigidStrictlyIdeal}
	Proper, non-unitary, strictly-hyperideal polyhedra in $\mathbb{H}^{3}$ are globally rigid, unique up to hyperbolic isometries.
\end{Theorem}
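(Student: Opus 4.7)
Let $P_1 = P_1' \cap \mathbb{H}^{3}$ and $P_2 = P_2' \cap \mathbb{H}^{3}$ be two proper, non-unitary strictly-hyperideal polyhedra whose projective polyhedra $P_1'$ and $P_2'$ are combinatorially equivalent, and such that for each vertex $f$ of $P'$ the corresponding truncation face of $P_1$ at $f$ is hyperbolically isometric to that of $P_2$. The plan is to transport the problem to the boundary sphere $\mathbb{S}^{2} = \partial \mathbb{H}^{3}$, invoke the General Main Theorem on the associated \textit{c}-polyhedra, and then extend the resulting M\"obius transformation to an isometry of $\mathbb{H}^{3}$ by Poincar\'e extension. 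To set up, use the construction of the lemma preceding this theorem to attach to each $P_i$ the \textit{c}-polyhedron $G(\mathcal{C}_i)$ based on the common abstract spherical polyhedron $P^{*}$ dual to $P_i'$, with oriented vertex circles $C_u^{(i)} = \Pi_u^{(i)} \cap \mathbb{S}^{2}$. These \textit{c}-polyhedra are convex by that lemma, non-unitary since no edge of $P_i'$ is tangent to $\mathbb{S}^{2}$, and proper by definition of properness of a strictly-hyperideal polyhedron. Applying the antipodal map and reversing circle orientations as in Proposition~\ref{prop:faceorder} if necessary, I may assume consistent orientation.

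Next I verify that the \textit{c}-faces are M\"obius-congruent. Fix a vertex $f$ of $P'$, and let $\mathcal{C}_f^{(i)}$ denote the \textit{c}-face consisting of the circles $C_u^{(i)}$ for the faces $u$ of $P_i'$ incident to $f$. Its ortho-circle $O_f^{(i)}$ is cut out by the tangent cone from $f$ to $\mathbb{S}^{2}$, and inside the hyperbolic plane $\Pi^{(i)}$ with circle at infinity $O_f^{(i)}$ the circles of $\mathcal{C}_f^{(i)}$ determine hyperbolic lines that are exactly the support lines of the truncation face of $P_i$ at $f$. Therefore the hyperbolic isometry class of this face of $P_i$ coincides with the M\"obius-equivalence class of the pair $(\mathcal{C}_f^{(i)}, O_f^{(i)})$, and since $O_f^{(i)}$ is determined by $\mathcal{C}_f^{(i)}$, the hypothesis on faces translates directly into condition (i) of Theorem~\ref{Thm:Congruence ConditionI}. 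The General Main Theorem now produces a M\"obius transformation $T \in \text{M\"ob}(\mathbb{S}^{2})$ with $T(\mathcal{C}_1) = \mathcal{C}_2$ taking corresponding oriented circles to corresponding oriented circles.

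Finally, extend $T$ to the orientation-preserving hyperbolic isometry $\widetilde{T}$ of $\mathbb{H}^{3}$ by Poincar\'e extension; realized in the Klein model, $\widetilde{T}$ is the unique projective automorphism of $\mathbb{RP}^{3}$ preserving the ball $B^{3}$ whose boundary action is $T$. Since $\widetilde{T}$ sends each oriented circle $C_u^{(1)}$ with its companion disk to the oriented circle $C_u^{(2)}$ with its companion disk, it sends the support plane $\Pi_u^{(1)}$ of $P_1'$ to $\Pi_u^{(2)}$, and it carries the corresponding closed supporting half-space of $P_1'$ (the one whose intersection with $\mathbb{S}^{2}$ is disjoint from $D_u^{(1)}$) to the analogous one for $P_2'$. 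Intersecting these half-spaces gives $\widetilde{T}(P_1') = P_2'$, whence $\widetilde{T}(P_1) = P_2$, proving global rigidity.

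The hard part of this plan is not the Main Theorem itself nor the Poincar\'e extension, but rather the identification in the middle paragraph of the hyperbolic isometry class of a truncation face with the M\"obius-equivalence class of the corresponding \textit{c}-face; this bridge is precisely what lets the rigidity theorem on $\mathbb{S}^{2}$ speak to the rigidity of hyperideal polyhedra in $\mathbb{H}^{3}$, and verifying it cleanly requires care about orientations of the ortho-circles and about the role that the properness hypothesis plays in ruling out degenerate face configurations at hyperideal vertices.
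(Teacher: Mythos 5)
Your proposal is correct and follows exactly the paper's route: the paper's own proof is the one-line instruction to apply the Main Theorem to the associated \textit{c}-polyhedra, whose convexity, non-unitarity, and properness are supplied by the preceding lemma and the definition of a proper strictly-hyperideal polyhedron. You have simply made explicit the steps the paper leaves implicit, namely the dictionary between hyperbolic isometry classes of truncation faces and M\"obius classes of \textit{c}-faces, and the extension of the resulting M\"obius transformation to a projective automorphism preserving the ball.
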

\begin{proof}
	Apply The Main Theorem of the introduction to the corresponding \textit{c}-polyhedra $G(\mathcal{C})$. 
\end{proof}

Theorem~\ref{RigidStrictlyIdeal} offers an alternate proof of two corollaries that follow from Bao and Bonahon's work. The proofs are obtained from the observation that in the setting of either of the corollaries, the obviously non-unitary strictly-hyperideal polyhedron $P$ automatically is proper. The details of the proofs are left to the reader, but to close out this paper, we do provide a nice ``proof by picture'' in each case.

\begin{Corollary}[Bao-Bonahon~\cite{BaoBonahon:2002}]\label{COR:BB1}
	Let $P$ be a strictly-hyperideal polyhedron in $\mathbb{H}^{3}$ for which no edge of $P'$ meets the boundary $\partial \mathbb{H}^{3}$. Then $P$ is globally rigid.
\end{Corollary}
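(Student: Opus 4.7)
The plan is to apply Theorem~\ref{RigidStrictlyIdeal}, so it suffices to verify that $P$ is non-unitary and proper. Non-unitarity is immediate: the hypothesis that no edge of $P'$ meets $\partial\mathbb{H}^3$ rules out any edge being tangent to $\mathbb{S}^2$. The real content is to verify properness of the associated \textit{c}-polyhedron $G(\mathcal{C})$.

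Fix a vertex $v$ of $G$, corresponding to a face $v$ of $P'$ with support plane $\Pi_v$, and restrict the picture to $\Pi_v$. Inside $\Pi_v$ we see the closed Euclidean disk $\overline{h_v} = \Pi_v \cap (B^3 \cup \mathbb{S}^2)$ bounded by $C_v$, together with the face $v$ as a convex Euclidean polygon with vertices $f_1, \dots, f_n$ and edges $e_i = f_i f_{i+1}$ all lying outside $\overline{h_v}$. Condition ($2'$) forces the face to meet $\mathbb{H}^3$, and convexity together with the edges being outside $\overline{h_v}$ forces $\overline{h_v}$ to lie entirely inside the polygon. From this I would deduce that the supporting line of each edge $e_i$ is itself disjoint from $\overline{h_v}$: the disk lies to one side of this line, and a tangency to $C_v$ would place its tangency point on $e_i$, contradicting the hypothesis.

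Next I would translate to the \textit{c}-link picture on the companion disk $D_v$. Each ortho-circle $O_{f_i}$ is the polar circle of $f_i$ with respect to $\mathbb{S}^2$, and two such polar circles $O_{f_i}, O_{f_{i+1}}$ are disjoint on $\mathbb{S}^2$ precisely when the line through $f_i$ and $f_{i+1}$ misses the closed ball---which we just established. Consequently the consecutive support hyperbolic lines $\ell_i, \ell_{i+1}$ of the hyperideal polygon $P(v)$ are ultra-parallel in the Poincar\'e disk model on $D_v$, and every vertex of $P(v)$ is a genuine hyperideal vertex $s_{i,i+1}$.

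To finish I would check the two properness conditions. The common perpendicular $s_{i,i+1}$ corresponds, under polarity with respect to $C_v$ inside $\Pi_v$, to the pole of the line $\overline{f_i f_{i+1}}$; since that line lies strictly outside $\overline{h_v}$, its pole lies strictly inside, placing $s_{i,i+1}$ in $D_v$. Because the face $v$ is convex, every other vertex $f_j$ (with $j \neq i, i+1$) lies on the side of $\overline{f_i f_{i+1}}$ containing $\overline{h_v}$, so by polar duality each other $\ell_j$ leaves $s_{i,i+1}$ untouched on the interior side. This shows $s_{i,i+1} \subset P(v)$ and that the truncated figure built from the $\ell_i$ together with the hyperideal vertices is a compact convex hyperbolic polygon inside $P(v)$. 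Thus $P(v)$ is proper, $G(\mathcal{C})$ is proper, and Theorem~\ref{RigidStrictlyIdeal} gives the global rigidity of $P$. The main obstacle is the careful bookkeeping of the polar-duality dictionary between the projective picture in $\Pi_v$ and the hyperbolic picture in $D_v$; everything reduces to a convexity statement that mirrors the convexity of the face $v$.
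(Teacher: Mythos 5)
Your overall strategy (verify non-unitarity, verify properness at each vertex via the polar-duality picture in the support plane $\Pi_v$, then invoke Theorem~\ref{RigidStrictlyIdeal}) is the paper's strategy, and your preliminary steps are fine: non-unitarity is immediate, and the supporting line of each edge $e_i = f_if_{i+1}$ of the face $v$ is indeed disjoint from the closed disk $\overline{h_v}$. But the key duality claim is reversed, and this derails the rest of the argument. Two polar circles $O_{f_i}$ and $O_{f_{i+1}}$ are disjoint on $\mathbb{S}^2$ precisely when the line $\overline{f_if_{i+1}}$ \emph{meets} the open ball; when that line \emph{misses} the closed ball (the situation here), the two polar planes intersect along the polar line of $\overline{f_if_{i+1}}$, which is a secant of $\mathbb{S}^2$, so the polar circles meet in two points. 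Concretely, for $f_1=(2,0,0)$ and $f_2=(0,2,0)$ the segment and its supporting line miss the closed unit ball, yet the polar circles $\{x=\tfrac12\}\cap\mathbb{S}^2$ and $\{y=\tfrac12\}\cap\mathbb{S}^2$ intersect at $(\tfrac12,\tfrac12,\pm\tfrac1{\sqrt2})$. Your own computation actually exposes the inconsistency: you correctly observe that the pole of $\overline{f_if_{i+1}}$ lies strictly inside the disk, but that pole is exactly the point where the chords $\ell_i$ and $\ell_{i+1}$ meet in the Klein model, so they intersect at a finite point of $D_v$ rather than being ultra-parallel.

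Consequently, under the hypothesis of the corollary the hyperideal polygon $P(v)$ has \emph{only finite vertices} and the \textit{c}-link is a compact convex hyperbolic polygon with no green edges at all --- this is the paper's (much shorter) conclusion, illustrated in Fig.~\ref{fig:Bao-Bon1}; properness then amounts only to checking that the polar lines of the vertices of a convex polygon strictly containing $\overline{h_v}$ cut out a bounded convex polygon inside $D_v$, which is standard polar duality. Your elaborate verification that the hyperideal vertices $s_{i,i+1}$ lie inside $P(v)$ is analyzing the configuration of Corollary~\ref{COR:BB2} (every edge meets $\mathbb{H}^3$, ortho-circles pairwise disjoint, alternating green-black link), not the one at hand. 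To repair the proof, replace the disjointness claim by its converse and conclude directly that $L(v)$ is a compact convex hyperbolic polygon with no hyperideal vertices.
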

\begin{proof}
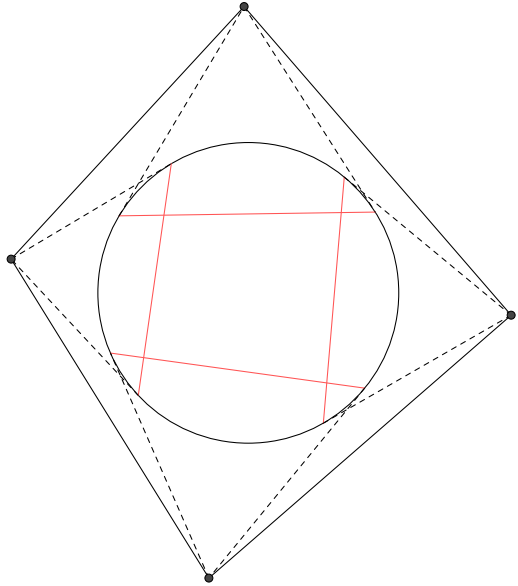
\begin{figure}
\definecolor{uuuuuu}{rgb}{0.26666666666666666,0.26666666666666666,0.26666666666666666}
\definecolor{ffqqqq}{rgb}{1.0,0.3333333333333333,0.3333333333333333}
\begin{tikzpicture}[line cap=round,line join=round,>=triangle 45,x=1.0cm,y=1.0cm]
\clip(-3.3,-3.9) rectangle (3.7,3.95);
\draw(0.,0.) circle (2.cm);
\draw [color=ffqqqq] (-1.4626714298749266,-1.3640352958144586)-- (-1.0239584924970655,1.717995636095487);
\draw [color=ffqqqq] (-1.7174900328529148,1.024806316847673)-- (1.686671988279327,1.0747732802567531);
\draw [color=ffqqqq] (1.2757077792632776,1.5403147931274168)-- (0.9973457692162961,-1.7335805192220968);
\draw [color=ffqqqq] (1.5468733117182991,-1.2677471977857657)-- (-1.8326257984119694,-0.8009261407863348);
\draw [dash pattern=on 2pt off 2pt] (-1.7174900328529148,1.024806316847673)-- (-0.05591609317447944,3.809465850391554);
\draw [dash pattern=on 2pt off 2pt] (-0.05591609317447944,3.809465850391554)-- (1.686671988279327,1.0747732802567531);
\draw [dash pattern=on 2pt off 2pt] (1.2757077792632776,1.5403147931274168)-- (3.494234212191163,-0.2970962618173735);
\draw [dash pattern=on 2pt off 2pt] (3.494234212191163,-0.2970962618173735)-- (0.9973457692162961,-1.7335805192220968);
\draw [dash pattern=on 2pt off 2pt] (1.5468733117182991,-1.2677471977857657)-- (-0.5241885842813877,-3.7948049419749177);
\draw [dash pattern=on 2pt off 2pt] (-0.5241885842813877,-3.7948049419749177)-- (-1.8326257984119694,-0.8009261407863348);
\draw [dash pattern=on 2pt off 2pt] (-1.4626714298749266,-1.3640352958144586)-- (-3.153312621028497,0.4488595581955424);
\draw [dash pattern=on 2pt off 2pt] (-3.153312621028497,0.4488595581955424)-- (-1.0239584924970655,1.717995636095487);
\draw (-3.153312621028497,0.4488595581955424)-- (-0.05591609317447944,3.809465850391554);
\draw (-0.05591609317447944,3.809465850391554)-- (3.494234212191163,-0.2970962618173735);
\draw (3.494234212191163,-0.2970962618173735)-- (-0.5241885842813877,-3.7948049419749177);
\draw (-0.5241885842813877,-3.7948049419749177)-- (-3.153312621028497,0.4488595581955424);
\begin{scriptsize}
\draw [fill=uuuuuu] (-3.153312621028497,0.4488595581955424) circle (1.5pt);
\draw [fill=uuuuuu] (-0.5241885842813877,-3.7948049419749177) circle (1.5pt);
\draw [fill=uuuuuu] (-0.05591609317447944,3.809465850391554) circle (1.5pt);
\draw [fill=uuuuuu] (3.494234212191163,-0.2970962618173735) circle (1.5pt);
\end{scriptsize}
\end{tikzpicture}
	\caption{In Corollary~\ref{COR:BB1}, the \textit{c}-link is a compact hyperbolic polygon. This is the projection of the upper hemisphere of $\mathbb{S}^{2}$ to the equatorial plane. The red lines are the ortho-circles and cut out a compact convex hyperbolic polygon.}\label{fig:Bao-Bon1}
\end{figure}
	Obviously $P$ is non-unitary. We show that $P$ is proper and apply Theorem~\ref{RigidStrictlyIdeal}. Let $u$ be a vertex of $P^{*}$ with corresponding circle $C_{u} = \Pi_{u} \cap \mathbb{S}^{2}$ in the \textit{c}-framework $G(\mathcal{C})$. By applying a projective transformation that fixes $\mathbb{S}^{2}$ if necessary, assume that the vertex $u$ lies at infinity in the direction of the north pole of $\mathbb{S}^{2}$ so that $C_{u}$ is the equator of $\mathbb{S}^{2}$ with $\Pi_{u}$ the equatorial plane and the companion disk $D_{u}$ the upper hemisphere. The vertex $u$ of $P^{*}$ now considered as a face of $P'$ is then a bounded convex Euclidean polygon in the equatorial plane $\Pi_{u}$ that separates the equator from infinity, none of whose edges meets $C_{u}$. Fig.~\ref{fig:Bao-Bon1} is a view of $D_{u}$ from above, projected to the plane $\Pi_{u}$. The ortho-circles that meet $D_{u}$ are precisely the intersections of the cones that inscribe $\mathbb{S}^{2}$ whose vertices are the vertices of the face $u$ of $P'$. This implies that the ortho-circles of the \textit{c}-polyhedron $G(\mathcal{C})$ that meet the companion disk $D_{u}$ cut out a compact convex hyperbolic polygon in $D_{u}$ with its hyperbolic metric, and so the \textit{c}-link of the vertex $u$ is a compact convex hyperbolic polygon and $G(\mathcal{C})$ is proper at $u$. In this case the \textit{c}-link has no green edges.
\end{proof}

\begin{Corollary}[Bao-Bonahon~\cite{BaoBonahon:2002}]\label{COR:BB2}
	Let $P$ be a strictly-hyperideal polyhedron in $\mathbb{H}^{3}$ for which every edge of $P'$ meets $\mathbb{H}^{3}$. Then $P$ is globally rigid.
\end{Corollary}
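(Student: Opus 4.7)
The plan is to mirror the strategy of Corollary~\ref{COR:BB1}: verify that the hypothesis forces the associated \textit{c}-polyhedron $G(\mathcal{C})$ to be both non-unitary and proper, then invoke Theorem~\ref{RigidStrictlyIdeal}. Non-unitarity is immediate, for any edge of $P'$ tangent to $\mathbb{S}^{2}$ would touch but not cross into $\mathbb{H}^{3}$, contradicting the hypothesis. The core task is therefore properness of $G(\mathcal{C})$ at each vertex $v$ of $G$.

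Fix such a vertex $v$ (equivalently, a face of $P'$) and apply a projective transformation fixing $\mathbb{S}^{2}$ that sends the pole of $\Pi_{v}$ to infinity in the $z$-direction; then $\Pi_{v}$ becomes the equatorial plane, $C_{v}$ the equator, $D_{v}$ the upper hemisphere, and vertical projection $D_{v}\to\Pi_{v}$ identifies the Poincar\'e hemisphere model of $D_{v}$ with the Klein disk model on the open unit disk of $\Pi_{v}$. In this normalization the face $v$ of $P'$ is a bounded convex polygon with vertices $w_{1},\dots,w_{n}$ outside the closed unit disk, and by hypothesis every edge $w_{i-1}w_{i}$ crosses the open unit disk. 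Under the identification above, each oriented ortho-circle $O_{f_{i}}^{+}$ restricts to the polar chord $\ell_{i}$ of $w_{i}$ with respect to the unit circle of $\Pi_{v}$. I would exhibit the \textit{c}-link $L(v)$ by the polygon
\[
	v^{\mathrm{tr}} \;=\; v\,\cap\,\bigcap_{i}\{x\in\Pi_{v}\,:\,w_{i}\cdot x\le 1\},
\]
obtained by cutting off each corner $w_{i}$ of $v$ with the half-plane determined by its polar chord. The boundary of $v^{\mathrm{tr}}$ alternates between polar-chord segments on the $\ell_{i}$ (the black edges that support the hyperideal polygon $P(v)$) and residual segments on the original lines $w_{i-1}w_{i}$; by Klein polar duality the latter coincide with the green common-perpendicular segments $s_{i-1,i}$, since the pole of $w_{i-1}w_{i}$ equals $\ell_{i-1}\cap\ell_{i}$ and its polar chord is the line $w_{i-1}w_{i}$ itself.

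The technical heart, which is clean in this case, is the observation that $v^{\mathrm{tr}}$ lies in the closed unit disk of $\Pi_{v}$. Any $p\in v^{\mathrm{tr}}$ is a convex combination $p=\sum_{i}\lambda_{i}w_{i}$ of the vertices of $v$ (being in $v$) and satisfies $w_{i}\cdot p\le 1$ for every $i$ (being in each truncating half-plane), so that
\[
	|p|^{2} \;=\; p\cdot p \;=\; \sum_{i}\lambda_{i}\,(w_{i}\cdot p) \;\le\; \sum_{i}\lambda_{i} \;=\; 1.
\]
Non-unitarity upgrades this to strict inequality at every vertex of $v^{\mathrm{tr}}$: a vertex with $|p|^{2}=1$ would force equality $w_{i}\cdot p=1$ throughout the convex combination, placing the corresponding line through two vertices of $v$ tangent to $\mathbb{S}^{2}$, which for adjacent vertices directly contradicts the edge-meets-$\mathbb{H}^{3}$ hypothesis (and the remaining non-adjacent cases are ruled out under the standard generic-position assumption on $P'$). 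Thus $v^{\mathrm{tr}}$ is a compact convex polygon inside $D_{v}$ with the alternating green-black boundary structure required, it is exactly the polygon $P'$ of the properness definition, and its inclusion $v^{\mathrm{tr}}\subseteq P(v)$ verifies both properness conditions at $v$. Since $v$ was arbitrary, $G(\mathcal{C})$ is proper and Theorem~\ref{RigidStrictlyIdeal} completes the proof. The main obstacle I anticipate in turning this plan into a rigorous argument is the non-adjacent case of strictness, which may require either a mild genericity hypothesis on $P'$ or a more careful convexity argument exploiting that every edge of the face $v$ already crosses the disk.
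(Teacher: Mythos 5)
Your proposal is correct and follows essentially the same route as the paper's (admittedly picture-level) proof: normalize so the face $v$ of $P'$ lies in the equatorial plane, identify the ortho-lines with the polar chords of the vertices $w_i$ in the Klein disk, and recognize the \textit{c}-link as the corner-truncated face with alternating black (polar-chord) and green (common-perpendicular) edges --- your convex-combination computation $|p|^{2}=\sum_i\lambda_i(w_i\cdot p)\le 1$ is a clean way to supply the compactness detail the paper leaves to the reader. Your flagged worry about non-adjacent degeneracies needs no genericity hypothesis: if a vertex $p$ of $v^{\mathrm{tr}}$ had $|p|=1$, equality would force every $w_i$ with positive weight onto the tangent line $\{x\cdot p=1\}$ while all other vertices satisfy $w_k\cdot p\le 1$, so that tangent line would support an edge of the convex polygon $v$, contradicting the hypothesis that every edge crosses the open disk.
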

\begin{proof}
\begin{figure}
\definecolor{qqccqq}{rgb}{0.,0.8,0.}
\definecolor{uuuuuu}{rgb}{0.26666666666666666,0.26666666666666666,0.26666666666666666}
\definecolor{ffqqqq}{rgb}{1.,0.,0.}
\begin{tikzpicture}[line cap=round,line join=round,>=triangle 45,x=1.0cm,y=1.0cm]
\clip(-2.25,-2.2) rectangle (2.35,2.4);
\draw(0.,0.) circle (2.cm);
\draw [color=ffqqqq] (-1.680458119658683,1.0844632350030148)-- (0.4289050917517141,1.9534688178390394);
\draw [color=ffqqqq] (-1.9818250186199544,-0.26901597642522546)-- (-0.7014988683686489,-1.8729386903146363);
\draw [color=ffqqqq] (0.3269497449041427,-1.973094996270356)-- (1.9786625766975872,-0.2913664489546879);
\draw [dash pattern=on 1pt off 1pt] (-1.680458119658683,1.0844632350030148)-- (-0.9274699644751478,2.2512755514956666);
\draw [dash pattern=on 1pt off 1pt] (-0.9274699644751478,2.2512755514956666)-- (0.4289050917517141,1.9534688178390394);
\draw [dash pattern=on 1pt off 1pt] (-1.9818250186199544,-0.26901597642522546)-- (-1.8210241413647705,-1.4536266668201825);
\draw [dash pattern=on 1pt off 1pt] (-1.8210241413647705,-1.4536266668201825)-- (-0.7014988683686489,-1.8729386903146363);
\draw [dash pattern=on 1pt off 1pt] (0.3269497449041427,-1.973094996270356)-- (1.7661380033570506,-1.7346157365998087);
\draw [dash pattern=on 1pt off 1pt] (1.9786625766975872,-0.2913664489546879)-- (1.7661380033570506,-1.7346157365998087);
\draw (-1.207153700958701,1.0916358536189266)-- (-1.0136652974671245,1.1713483004553265);
\draw (-0.9585137800632598,1.3818864849258221)-- (-1.0136652974671245,1.1713483004553265);
\draw (-0.6730026328371006,1.4995100310080327)-- (-0.5737196128117099,1.352595064427541);
\draw (-0.5737196128117099,1.352595064427541)-- (-0.37537511893258985,1.4343081006525793);
\draw (1.1548742551629523,-0.8300925551450591)-- (1.0046639306933367,-0.9830325734700437);
\draw (1.0046639306933367,-0.9830325734700437)-- (1.124672113788065,-1.1608760349829372);
\draw (0.7975671796498527,-1.4939252764244644)-- (0.523940818823323,-1.4724916156380508);
\draw (0.523940818823323,-1.4724916156380508)-- (0.37362631083643427,-1.6255377107530111);
\draw (-0.7700055606049092,-1.535954903296295)-- (-0.8929928084509801,-1.3615005743066437);
\draw (-0.8929928084509801,-1.3615005743066437)-- (-1.1176959104491195,-1.3515497542881205);
\draw (-1.5050937869691894,-0.8662388408610191)-- (-1.4547983599528682,-0.657701245779161);
\draw (-1.4547983599528682,-0.657701245779161)-- (-1.5886267863803945,-0.49004829823212026);
\draw [color=qqccqq] (-1.6389222133967154,-0.6985858933139776)-- (-1.15685827394238,1.3001734487007834);
\draw [color=qqccqq] (-0.4746581389579805,1.581223067233071)-- (1.2749866217749541,-1.0078299398679769);
\draw [color=qqccqq] (0.6472526716629639,-1.6469713715394247)-- (-0.9838674840215943,-1.5192027018351602);
\draw (-1.6389222133967154,-0.6985858933139776)-- (-1.8210241413647705,-1.4536266668201825);
\draw (-1.8210241413647705,-1.4536266668201825)-- (-0.9838674840215943,-1.5192027018351602);
\draw (0.6472526716629639,-1.6469713715394247)-- (1.7661380033570506,-1.7346157365998087);
\draw (1.7661380033570506,-1.7346157365998087)-- (1.2749866217749541,-1.0078299398679769);
\draw (-1.15685827394238,1.3001734487007834)-- (-0.9274699644751478,2.2512755514956666);
\draw (-0.9274699644751478,2.2512755514956666)-- (-0.4746581389579805,1.581223067233071);
\begin{scriptsize}
\draw [fill=uuuuuu] (-0.9274699644751478,2.2512755514956666) circle (1.5pt);
\draw [fill=uuuuuu] (-1.8210241413647705,-1.4536266668201825) circle (1.5pt);
\draw [fill=uuuuuu] (1.7661380033570506,-1.7346157365998087) circle (1.5pt);
\end{scriptsize}
\end{tikzpicture}
	\caption{In Corollary~\ref{COR:BB2}, the \textit{c}-link is a right-angled green-black polygon with alternating green and black sides. This is the projection of the upper hemisphere of $\mathbb{S}^{2}$ to the equatorial plane. The black sides are colored red for emphasis, and are the ortho-circles, and the green ones are the orthogonal segments between successive ortho-circles.}\label{fig:Bao-Bon2}
\end{figure}
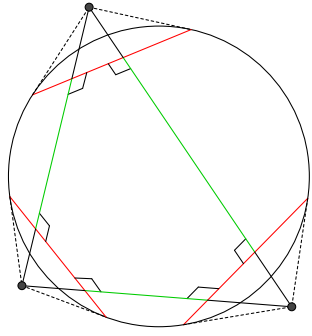
	The proof proceeds exactly as in the preceding lemma, except that all the edges of the face $u$ of $P'$ meet the the open unit disk in $\Pi_{u}$. This implies, as in Fig.~\ref{fig:Bao-Bon2}, that the ortho-circles are pairwise disjoint. In the hyperbolic plane $D_{u}$, the common orthogonal segment to two adjacent ortho-circles with cone points $a$ and $b$, adjacent vertices of the face $u$, project orthogonally to the edge between $a$ and $b$. These give the green segments that alternate with the segments that lie along the ortho-circles to cut out a compact convex green-black polygon with alternating edge colors. Again, the \textit{c}-link of the vertex $u$ is a compact convex hyperbolic polygon and $G(\mathcal{C})$ is proper at $u$.
\end{proof}

\end{document}